\documentclass[twoside,12pt]{article}
\usepackage{import}
\usepackage{preamble}
\usepackage{microtype}
\usepackage{geometry}
\usepackage{fancyhdr}
\fancypagestyle{plain}{
  \fancyhf{}
  \fancyfoot[C]{\small\thepage}

}

\title{Relative dualizability and the cobordism hypothesis for defects}
\author{William Stewart}
\date{}

\excludecomment{claudia}

\begin{document}

\maketitle
\begin{abstract}
    This paper compares two candidate notions of morphism in the setting of fully local topological field theory. A first description can be formulated in terms of defects of codimension $k$ separating a pair of defects of codimension $(k-1)$. Lurie's cobordism hypothesis with singularities identifies such defects with suitably dualizable $k$-morphisms in the target category. A second description can be formulated in terms of symmetric monoidal (op)lax natural transformations between the truncations of the source and target theories. We show that these two descriptions agree.

    The comparison rests on a criterion for when a $k$-morphism determines a codimension-$k$ defect, expressed as a one-sided iterated adjunctibility condition. Assuming the ordinary cobordism hypothesis, this comparison provides a reformulation of the cobordism hypothesis for defects in terms of systems of oplax natural transformations.
    
    We also apply the criterion to the higher Morita category, where it yields a reducibility theorem for modules over $E_n$-algebras: a module over an $(n+1)$-dualizable $E_n$-algebra $A$ is $n$-dualizable over $A$ if and only if its underlying $E_{n-1}$-algebra is $n$-dualizable.
\end{abstract}

\tableofcontents

\pagestyle{fancy}

\section{Introduction}

The cobordism hypothesis \cite{BD95,L09} classifies fully local $n$-dimensional topological field theories in terms of sufficiently dualizable objects in the target category. Moreover, Lurie extends the cobordism hypothesis to include defects: a codimension-$k$ defect is classified in terms of sufficiently dualizable $k$-morphisms in the target category. In this paper, we give equivalent conditions for a $k$-morphism to define a codimension-$k$ defect. We use this result to connect different approaches to defects, and to obtain a reducibility theorem for modules over $E_n$-algebras.

We will focus primarily on the following prototypical defects. Consider a defect of codimension $k$ whose local structure corresponds to $\RR^n$ with stratification by the subspaces $\{0\}^k  \times \RR^{n-k} \subset \{0\}^{k-1}\times \RR^{n-k+1} \subset \cdots \subset \RR^n$. For $n=3$, and $k=1,2,3$, these local models are

\begin{center}
\vspace{2mm}
\begin{tikzpicture}[
  x={(0.92cm,-0.20cm)},
  y={(0cm,0.92cm)},
  z={(0.55cm,0.28cm)},
  ambient edge/.style={
    draw=black!35,
    line width=0.35pt
  },
  wall/.style={
    draw=wallcolour!80!black,
    fill=wallcolour,
    fill opacity=0.18,
    line width=0.55pt
  },
  line defect/.style={
    draw=linecolour!85!black,
    line width=2.1pt,
    line cap=round
  },
  point defect/.style={
    circle,
    fill=pointcolour!85!black,
    draw=white,
    line width=0.45pt,
    inner sep=2.25pt
  }
]
% NOTE: the role of the x/z coordinates below should be switched to match the convention ({0}^k x R^{n-k} \subset ... \subset R^n).

% #1 = horizontal displacement
% #2 = deepest codimension to draw
\newcommand{\prototypicaldefect}[2]{%
  \begin{scope}[xshift=#1cm]

    % The codimension-one stratum {0} x R^2.
    \path[wall]
      (-1.45,-1.05,0) --
      ( 1.45,-1.05,0) --
      ( 1.45, 1.05,0) --
      (-1.45, 1.05,0) -- cycle;

    % A box indicating a neighbourhood in the ambient R^3.
    \draw[ambient edge]
      (-1.45,-1.05,-0.85) --
      ( 1.45,-1.05,-0.85) --
      ( 1.45, 1.05,-0.85) --
      (-1.45, 1.05,-0.85) -- cycle;

    \draw[ambient edge]
      (-1.45,-1.05,0.85) --
      ( 1.45,-1.05,0.85) --
      ( 1.45, 1.05,0.85) --
      (-1.45, 1.05,0.85) -- cycle;

    \foreach \x/\y in {
      -1.45/-1.05,
       1.45/-1.05,
       1.45/ 1.05,
      -1.45/ 1.05
    }
      \draw[ambient edge]
        (\x,\y,-0.85) -- (\x,\y,0.85);

    % For k >= 2, add the codimension-two stratum
    % {0}^2 x R.
    \ifnum#2>1
      \draw[line defect]
        (-1.45,0,0) -- (1.45,0,0);
    \fi

    % For k = 3, add the codimension-three stratum {0}^3.
    \ifnum#2>2
      \node[point defect] at (0,0,0) {};
    \fi

    \node[font=\small] at (0,-1.8,0) {$k=#2$};
  \end{scope}%
}

\prototypicaldefect{0}{1}
\prototypicaldefect{4.55}{2}
\prototypicaldefect{9.10}{3}

\end{tikzpicture}
\end{center}

In \Cref{sec:def-structure}, we give a detailed account of the tangential structures and labeling systems with which these prototypical defects can be equipped. Each label for a prototypical defect of codimension $k$ comes with a source and target label for the codimension-$(k-1)$ defects on either side of that stratum. In this way, these prototypical defects give a notion of a map between defects of one less codimension. For example, a prototypical codimension-one defect---a domain wall---gives an interface between a pair of $n$-dimensional framed field theories---the bulk theories---on either side of the domain wall.

A second notion of a morphism between topological field theories can be formulated in terms of (op)lax natural transformations. This idea originates in the twisted field theories of Stolz--Teichner \cite{ST11} and in the relative field theories of Freed--Teleman \cite{FT12}, each of which presents a theory as a transformation out of the trivial theory. A definition of this transformation as a symmetric monoidal (op)lax natural transformation was given by Johnson-Freyd--Scheimbauer \cite{JFS15}. Given a pair of $n$-dimensional field theories $F_A$ and $F_B$, one can consider an (op)lax natural transformation $\beta:\tau_{\leq n-1} F_A \Rightarrow \tau_{\leq n-1}F_B$ between their truncations to dimension $(n-1)$.

In this paper, we reconcile these two notions in the setting of fully local field theory. In the case of a domain wall, we show the following:

\begin{introtheorem}\label{intro-thm-1}
Assuming the cobordism hypothesis for defects, an $n$-dimensional fully local domain wall is equivalent to a pair of $n$-dimensional fully local bulk theories $F_A$ and $F_B$, and a symmetric monoidal oplax natural transformation $\beta:\tau_{\leq n-1} F_A \Rightarrow \tau_{\leq n-1} F_B$ between their truncations.
\end{introtheorem}

\Cref{intro-thm-1} is also true, replacing oplax natural transformations by lax natural transformations. In \Cref{sec:defect-cob-hyp}, we generalize \Cref{intro-thm-1} to prototypical defects of arbitrary codimension, equipped with general tangential structures and labeling systems. A useful motto is that the interacting tangential structures on each stratum are encoded in the truncation map, while labeling systems correspond to systems of oplax natural transformations with source and target compatibility. We also provide a version of \Cref{intro-thm-1} for the general singularities introduced in \cite[Section 4.3]{L09}.

\begin{warning}
In general, a relative field theory is an (op)lax natural transformation from the trivial theory to an arbitrary once-categorified field theory $G$. In this paper, we consider only the case that $G$ arises as the truncation of a bulk theory defined in one degree higher. In the general setting, there is no equivalence between the oplax and lax formulations, and neither of these notions is captured by a fully local field theory with singularities in the sense of \cite{L09}. In \cite[Part~II]{StePhD}, the author gives a geometric definition of a bordism category such that functors from it correspond to relative field theories (in the more general sense); this construction, along with further motivating examples, will appear in \cite{Relative}. 
\end{warning}

Numerous constructions of boundary theories and defects in the literature proceed either through the cobordism hypothesis for defects or through symmetric monoidal (op)lax natural transformations. For example, the cobordism hypothesis for defects has been applied to gapped boundary theories \cite{FT21} and to extended Rozansky--Witten models with arbitrary defect networks \cite{BCFR24}. The approach through symmetric monoidal (op)lax natural transformations has been used to describe Witten--Reshetikhin--Turaev theory as a boundary condition for Crane--Yetter theory \cite{Hai23,Hai25} and to construct domain walls between finite gauge theories via higher integral transforms \cite{VD23b}. Further applications include the construction of a non-semisimple Crane--Yetter theory relative to classical gauge theory \cite{Kin24} and of full logarithmic conformal field theories from three-dimensional topological field theories with surface defects \cite{HR25}. Our results show that topological theories constructed through (op)lax natural transformations define fully local defect theories in the sense of \cite{L09} precisely when their bulk theories extend to the full ambient dimension.

The prototypical defects described above are expected to fit into a higher category, in which a codimension-$k$ defect is a $k$-morphism between the codimension-$(k-1)$ defects on either side of it. Categories of this shape appear throughout the study of topological symmetries, where the symmetries of a field theory are organized by the topological defects it admits \cite{GKSW15,BBSNT23,FMT22}, and in the study of defect topological field theories \cite{CRS19}. A general framework for such categories, for defects equipped with tangential structures, is proposed in \cite{Mul25}, and established there for fully extended topological field theories assuming the cobordism hypothesis with singularities. Our results suggest that such a category admits an equivalent description in which the $k$-morphisms are the symmetric monoidal oplax natural transformations of \cite{JFS15}, taken between the truncations of the corresponding source and target theories.

\begin{terminology}\label{term:defect-cob-hyp}
All of the statements we refer to as the \emph{cobordism hypothesis for defects} are special cases of Lurie's cobordism hypothesis with singularities \cite[Theorem~4.3.11]{L09}, stated as \Cref{hyp:general-singular-cobordism}. We use the phrase as an umbrella term for versions of the cobordism hypothesis with singularities that classify defects; for the prototypical defects considered here, it is \Cref{hyp:def-cob-hyp-main-2}.
\end{terminology} 

So far, we have described the two approaches to defects and the comparison between them. We now give an overview of the technical result underlying that comparison, the reformulation of the cobordism hypothesis for defects, and the application to modules over $E_n$-algebras.

\paragraph{Equivalent dualizability conditions.} The main technical result of this paper is a formulation of equivalent dualizability conditions for a $k$-morphism in a symmetric monoidal $(\infty,n)$-category $\Cc$. The comparison of different approaches to defects then follows from this result, using the cobordism hypothesis.

We say that a $k$-morphism $f$ is \emph{ambiently $n$-dualizable} if it belongs to a symmetric monoidal $(\infty,n)$-subcategory of $\Cc$ in which every object admits a dual and every $i$-morphism for $i<n$ admits both a left and right adjoint\footnote{Equivalently, $f$ belongs to the maximal such subcategory.}. The cobordism hypothesis for defects classifies framed codimension $k$-defects in terms of ambiently $n$-dualizable $k$-morphisms in $\Cc$. We use the term ambiently $n$-dualizable to communicate the idea that these morphisms define defects internal to ambient $n$-dimensional field theories.

In contrast, Johnson-Freyd--Scheimbauer classify oplax natural transformations in terms of a one-sided iterated adjunctibility condition. A $k$-morphism $f$ is \emph{$(n-k)$-times right-adjunctible} if $f$ admits a right adjoint and, recursively, all unit and counit $i$-morphisms arising in the resulting adjunction data admit right adjoints, for $i < n$.

We show that these dualizability conditions for a $k$-morphism coincide when the source and target are ambiently $n$-dualizable.

\begin{restatable}{introtheorem}{ThmMainDual}\label{thm:main-dual-result}
Let $f$ be a $k$-morphism whose source and target are ambiently $n$-dualizable. Then the following are equivalent:
\begin{enumerate}
    \item $f$ is ambiently $n$-dualizable.
    \item $f$ is $(n-k)$-times right-adjunctible.
\end{enumerate}
    The same is true when replacing `right' with `left', or replacing `$(n-k)$-times right-adjunctible' with any other one-sided iterated adjunctibility condition (see \Cref{def:dexterity-fn}).
\end{restatable}

\paragraph{Reformulating the cobordism hypothesis for framed domain walls.} Assuming the cobordism hypothesis for domain walls, \Cref{intro-thm-1} establishes an equivalence between a domain wall and a pair of bulk theories, along with an oplax natural transformation between their truncations. In fact, \Cref{thm:main-dual-result} gives a stronger result: assuming the ordinary cobordism hypothesis, the cobordism hypothesis for domain walls may be derived from the aforementioned equivalence. We now explain this result in the case of a framed domain wall. In \Cref{sec:defect-cob-hyp}, we extend this comparison to prototypical defects of arbitrary codimension, equipped with general tangential structures and labeling systems, and then to the arbitrary singularities given in \cite{L09}.

Heuristically, a framed domain wall describes a codimension-one interface between two $n$-dimensional framed field theories, called its \emph{bulk theories}. Its topology is encoded by a bordism category $\Bord_n^{\fr,\dom}$ whose bordisms are equipped with a codimension-one submanifold separating two disjoint regions labeled by $A$ and $B$. Along the interface, the last $n-1$ vectors of the $n$-framing are required to be tangent, and hence induce an $(n-1)$-framing. The first framing vector\footnote{An alternative convention uses the $n$-th framing vector to determine the coorientation, as in \cite{L09}. We use the first framing vector because this makes the interpretation of codimension-$k$ defects in terms of $k$-morphisms more apparent.} determines a coorientation of the interface and is required to point outward from the region labeled by $A$.

There are canonical symmetric monoidal functors
\begin{equation*}
\iota_A:\Bord_n^\fr\longrightarrow\Bord_n^{\fr,\dom}
\qquad\text{and}\qquad
\iota_B:\Bord_n^\fr\longrightarrow\Bord_n^{\fr,\dom},
\end{equation*}
obtained by regarding an ordinary framed bordism as a bordism with no interface and labeled entirely by $A$ or $B$, respectively. Given a symmetric monoidal functor $Z:\Bord_n^{\fr,\dom}\to\Cc$, its bulk theories are $F_A:=Z\circ\iota_A$ and $F_B:=Z\circ\iota_B$.

The \emph{framed domain wall interval} is the interval $[-1,1]$ with its standard $n$-framing and a single interface point at $0$, separating the region labeled by $A$ for $x<0$ from the region labeled by $B$ for $x>0$. For $n=2$, it is depicted as follows:
\begin{center}
\begin{tikzpicture}[
    scale=1.2,
    >=stealth,
    frame/.style={
        draw=framingblue,
        line width=0.7pt
    }
]
    % Regions of the interval
    \draw[defectpurple, line width=1.1pt]
        (-3,0) -- (0,0);
    \draw[defectyellow, line width=1.1pt]
        (0,0) -- (3,0);

    % Endpoints and interface
    \fill[defectpurple] (-3,0) circle (1.5pt);
    \fill[defectyellow] (3,0) circle (1.5pt);
    \fill[interfacegreen] (0,0) circle (2pt);

    % Constant 2-framing
    \foreach \x in {-3,-2,-1,0,1,2,3} {
        \draw[frame,->]
            (\x,0) -- ++(0.4,0);
        \draw[frame,->>]
            (\x,0) -- ++(0,0.55);
    }

    % Region labels
    \node[defectpurple, below=4pt] at (-1.5,0) {$A$};
    \node[defectyellow, below=4pt] at (1.5,0) {$B$};

    % Coordinate labels
    \node at (-3,-0.4) {$-1$};
    \node at (0,-0.4) {$0$};
    \node at (3,-0.4) {$1$};
\end{tikzpicture}
\end{center}

The framed domain wall interval defines a 1-morphism in $\Bord_n^{\fr,\dom}$ from the positively framed point labeled by $A$ to the positively framed point labeled by $B$. Consequently, a symmetric monoidal functor $Z:\Bord_n^{\fr,\dom}\to\Cc$ assigns to it a 1-morphism
\begin{equation*}
f:F_A(\pt_+)\longrightarrow F_B(\pt_+).
\end{equation*}
Since $\Bord_n^{\fr,\dom}$ has duals for objects, and adjoints for $i$-morphisms with $i<n$, the morphism $f$ is ambiently $n$-dualizable. The cobordism hypothesis for framed domain walls \cite{L09} asserts that this assignment gives an equivalence between symmetric monoidal functors $Z:\Bord_{n}^{\fr,\dom}\to \Cc$ and ambiently $n$-dualizable 1-morphisms $f:X\to Y$ in $\Cc$.

Being ambiently $n$-dualizable, the framed domain wall interval is, in particular, $(n-1)$-times right-adjunctible. Assuming the ordinary cobordism hypothesis, the classification of oplax natural transformations \cite{JFS15} associates to the framed domain wall interval a symmetric monoidal oplax natural transformation
\begin{equation*}
\begin{tikzcd}
{\Bord_{n-1}^\fr} && \Bord_n^{\fr,\dom}
\arrow[""{name=0, anchor=center, inner sep=0},
"{\tau_{\leq n-1}\iota_A}"{pos=0.475},
curve={height=-24pt}, from=1-1, to=1-3]
\arrow[""{name=1, anchor=center, inner sep=0},
"{\tau_{\leq n-1}\iota_B}"',
curve={height=24pt}, from=1-1, to=1-3]
\arrow["\beta_{\Id}", shorten <=6pt, shorten >=6pt,
Rightarrow, from=0, to=1]
\end{tikzcd}
\end{equation*}
corresponding to the framed domain wall interval. For a symmetric monoidal functor $Z:\Bord_n^{\fr,\dom}\to\Cc$, let $\beta_Z:=Z\beta_{\Id}$ denote the symmetric monoidal oplax natural transformation obtained by whiskering $\beta_{\Id}$ with $Z$.

The following theorem gives a reformulation of the cobordism hypothesis for framed domain walls. \Cref{thm:main-comparison} extends this result to prototypical defects of arbitrary codimension equipped with general tangential structures and labeling systems, while \Cref{thm:general-singular-comparison} extends it further to arbitrary singularities in the sense of \cite{L09}. 

\begin{introtheorem}\label{thm:framed-dom-wall-intro}
Assuming the ordinary cobordism hypothesis, the cobordism hypothesis for framed domain walls is equivalent to the assertion that the assignment $$Z\mapsto(Z \circ \iota_A,Z\circ \iota_B,\beta_Z)$$ induces an equivalence between the following types of data:
\begin{enumerate}[label=(\arabic*)]
\item Symmetric monoidal functors $Z:\Bord_n^{\fr,\dom}\to\Cc$.
\item Triples $(F_A,F_B,\beta)$ consisting of symmetric monoidal functors $F_A,F_B:\Bord_n^\fr\to\Cc$
and a symmetric monoidal oplax natural transformation
\begin{equation*}
\beta:\tau_{\leq n-1}F_A\Longrightarrow
\tau_{\leq n-1}F_B \ .
\end{equation*}
\end{enumerate}
The analogous statement holds with `oplax' replaced by `lax'.
\end{introtheorem}

Fix $F_A,F_B:\Bord_n^\fr\to\Cc$, and set $X:=F_A(\pt_+)$ and $Y:=F_B(\pt_+)$. All functors and natural transformations in the following diagram are understood to be symmetric monoidal. We summarize the comparison as follows:
\begin{equation*}\scalebox{0.75}{
\begin{tikzpicture}[node distance=4cm]
\node (domwall) [process] {$\begin{lrdcases}
    \text{Framed domain walls from $F_A$ to $F_B$}
\end{lrdcases}$};
\node (oplax) [process, right of=domwall,xshift=7cm] {$\begin{lrdcases}
    \text{oplax natural transformations} \\ \hspace{2mm} \text{$\beta: \tau_{\leq n-1} F_A\Longrightarrow \tau_{\leq n-1} F_B$}
\end{lrdcases}$};
\node (defectdual) [process, below of=domwall] {$\begin{lrdcases}
    \hspace{3mm} \text{ambiently $n$-dualizable} \\ \text{1-morphisms $f:X\to Y$}
\end{lrdcases}$};
\node (nrightdual) [process, right of=defectdual,xshift=7cm] {$\begin{lrdcases} \text{$(n-1)$-times right-adjunctible} \\ \quad \text{1-morphisms $f:X\to Y$}\end{lrdcases}$};
\draw[->] (domwall) -- (oplax) node [midway, label=above:$Z \mapsto \beta_{Z}$] {};
\draw [->] (domwall) -- (defectdual) node [midway, label=left: Cobordism hypothesis for] {} node [pos=.75, label=left:framed domain walls \cite{L09}] {};
\draw[->] (defectdual) -- (nrightdual) node [midway, label=above:$\simeq$] {} node [midway, label=below:\Cref{thm:main-dual-result}] {};
\draw[->] (oplax) -- (nrightdual) node [midway, label=right:\cite{JFS15} using ordinary] {} node [pos=0.75, label=right:cobordism hypothesis] {} node [midway, label=left:$\simeq$] {} ;
\end{tikzpicture} } \end{equation*}

\begin{remark}\label{rem:univ-prop}
There is also a dual universal-property perspective on \Cref{thm:framed-dom-wall-intro}. Maps out of $\Bord_n^{\fr,\dom}$ should be classified, naturally in the target, by a pair of $n$-dimensional framed bulk theories together with an oplax natural transformation between their truncations. Formulating this as a genuine universal property requires a higher-categorical setting in which symmetric monoidal higher categories, symmetric monoidal functors, and oplax natural transformations can be organized.
 
A framework for formulating such universal properties is provided by Masuda's theory of categorical spectra \cite{Mas24}, developed independently of and concurrently with the present work. Masuda constructs cobordism categorical spectra with singularities as certain iterated (co)extensions, built from the lax Gray tensor product. The universal properties exhibited by these objects are analogous to the reformulations presented here. A key step in the present paper is establishing that the relevant lax and oplax constructions agree in the presence of adjoints; the same expectation is noted in \cite[Remark~6.3.2]{Mas24}. It would be interesting to compare Masuda's categorical-spectrum models with $(\infty,n)$-categories constructed geometrically from bordisms equipped with defect data.
\end{remark}

\paragraph{Ongoing work: the geometric picture.} The results of this paper apply purely in the setting of fully local topological field theory, and the techniques we use involve manipulations of duality and adjunction data in higher categories. However, behind the scenes, geometric intuition of the bordism category has suggested these theorems and our reformulation of the cobordism hypothesis for defects. We briefly explain these ideas here, and refer the reader to \cite{StePhD} for further details. The realization of these ideas in an $\infty$-categorical setting is work in progress; carrying it out would provide a geometric proof of our main conjectures (\Cref{conj:main-conj} and \Cref{conj:general-singular-reformulation}), which, in this paper, we show are equivalent, upon assuming the ordinary cobordism hypothesis, to the cobordism hypothesis for defects. Hence, this would provide a geometric proof of the cobordism hypothesis for defects, assuming the ordinary cobordism hypothesis. Here, our starting point is a bordism category defined geometrically in analogy with the ordinary bordism category $\Bord_{n}$ \cite{L09,CS19}; a corresponding statement within the framework of categorical spectra is established in \cite{Mas24}.

Our reformulation of the cobordism hypothesis for domain walls says that a domain wall is determined by a pair of bulk theories and an oplax natural transformation between their truncation. The bulk theories have a clear geometric interpretation: they arise from restricting the domain wall to bordisms with no interface, equipped with a single bulk label. The oplax natural transformation also admits a geometric interpretation, it should be the \emph{dimensional reduction} along the domain wall interval $\Idom$. The difficulty with this definition is understanding what is meant by dimensional reduction along the domain wall interval.

Let $\Idom$ be the domain wall interval (for the geometric interpretation, it is not so important to have a framing). Recall that $\Idom$ is a bordism from $\pt_A$, the point labeled $A$, to $\pt_B$, the point labeled $B$. Suppose that $M$ is a closed manifold of dimension $k < n$. Then the product $M\times \Idom$ admits the structure of bordism from the manifold $M$ labeled $A$ to the manifold $M$ labeled $B$. Given a domain wall $Z:\Bord_{n}^{\dom}\to \Cc$, evaluation on $M \times \Idom$ gives a morphism $F_A(M) \to F_B(M)$ in $\Cc$, where $F_A$ and $F_B$ are the bulk theories for $Z$. This is the data that an oplax natural transformation from $\tau_{\leq n-1} F_A$ to $\tau_{\leq n-1}F_B$ assigns to $M$.

This geometric construction becomes more complicated when $M$ is a bordism with corners. In this case, $M \times \Idom$ is a manifold with corners, and it is not immediately clear how to interpret this manifold in the bordism category $\Bord_{n}^{\dom}$. When $M$ is a bordism between closed manifolds $X$ and $Y$, then $M \times \Idom$ has four faces $M\times \pt_A$, $M\times \pt_B$, $X \times \Idom$ and $Y \times \Idom$. By smoothing and inserting edges at the corners, the manifold with corners $M \times \Idom$ may be interpreted as a diagram of the form

\[\begin{tikzcd}
	{X\times \pt_A} && {X\times \pt_B} \\
	\\
	{Y\times \pt_A} && {Y\times \pt_B}
	\arrow["{X\times \Idom}", from=1-1, to=1-3]
	\arrow["{M\times \pt_A}"', from=1-1, to=3-1]
	\arrow["{M\times \pt_B}", from=1-3, to=3-3]
	\arrow["{M\times \Idom}"{description}, between={0.2}{0.8}, Rightarrow, nfold, from=3-1, to=1-3]
	\arrow["{Y\times \Idom}"', from=3-1, to=3-3]
\end{tikzcd}\]

Evaluating $Z$ on this diagram, then gives the data that an oplax natural transformation from $\tau_{\leq n-1} F_A$ to $\tau_{\leq n-1}F_B$ assigns to $M: X\to Y$. This unfolding procedure becomes more complicated when $M$ has corners of increasing codimension. A construction in the setting of bicategories, and a heuristic construction in the setting of $(\infty,n)$-categories is provided in \cite{StePhD}, utilizing notions of companions and conjoints in higher uple-categories. More generally, one could replace $\Idom$ by another bordism with or without defects.

The geometric intuition of dimensional reduction along $\Idom$ explains why one gets an oplax natural transformation from a domain wall. Since the construction is geometric, it would also apply in the non-fully local setting. To see why this data, along with the bulk theories, should determine a domain wall, we appeal to another very geometric idea. Given a manifold $M$ equipped with a codimension-one submanifold $M_1$, there exists a collar neighborhood of that submanifold (which is unique up to contractible choice). By taking such a collar neighborhood, a manifold, or more generally a bordism, can be decomposed into three pieces. One labeled entirely by $A$, one labeled entirely by $B$, and one corresponding to the collar, which is of the form $M_1 \times \Idom$. The value of a domain wall should be reconstructed by applying the bulk theories on the pieces labeled by $A$ and $B$, and by applying the oplax natural transformation to the component $M_1 \times \Idom$.

\paragraph{Application to $E_n$-modules.} Aside from its application to defects, \Cref{thm:main-dual-result} is of interest in its own right. Let $\Ss$ be a presentably symmetric monoidal $(\infty,1)$-category, and let $\Mor_n(\Ss)$ denote the higher Morita category of $E_n$-algebras in $\Ss$ \cite{Sch14,Kar25,SSS26}. For example, applying \Cref{thm:main-dual-result} and the results of \cite{SSS26} to a 1-morphism $M:A\to B$ in $\Mor_n(\Ss)$ yields the following result. Here $n$-dualizability of $M$ over $A$ and over $B$ are relative dualizability conditions, defined in \Cref{def:n-dualizable-over}; for $n=1$ they say that $M$ is dualizable as a left $A$-module, respectively as a right $B$-module, recovering the usual notions of \cite[Section~4.6]{LHA}.
 
\begin{restatable}{introcorollary}{ThmEnBimod}\label{thm:En-bimod}
Let $A$ and $B$ be $(n+1)$-dualizable $E_n$-algebras in $\Ss$ and $M$ an $(A,B)$-bimodule (i.e.\ a 1-morphism $M:A\to B$ in $\Mor_{n}(\Ss)$). Then the following are equivalent:
\begin{enumerate}
    \item $M$ is ambiently $(n+1)$-dualizable.
    \item $M$ is $n$-dualizable over $A$.
    \item $M$ is $n$-dualizable over $B$.
\end{enumerate}
\end{restatable}
 
In the case that $B$ is trivial, \Cref{thm:En-bimod} reduces to the following result, which we think of as a reducibility theorem for modules over an $(n+1)$-dualizable $E_{n}$-algebra. Here we use the term \emph{left module over $A$} for a 1-morphism $M:A\to\unit$ in $\Mor_n(\Ss)$.
 
\begin{restatable}{introcorollary}{ThmEnMod}\label{thm:En-mod}
Let $M$ be a left module over an $(n+1)$-dualizable $E_n$-algebra $A$ in $\Ss$. Then $M$ is $n$-dualizable over $A$ if and only if the $E_{n-1}$-algebra underlying $M$ is $n$-dualizable in $\Mor_{n-1}(\Ss)$.
\end{restatable}
 
\begin{example}
The case $n=1$ is established in \cite[Section~4.6]{LHA}. An $E_1$-algebra $A$ is 2-dualizable if and only if it is smooth and proper. In this case, \Cref{thm:En-mod} says that an $A$-module $M$ is dualizable over $A$ if and only if $M$ is dualizable as an object of $\Ss$.
\end{example}
 
\begin{example}
Suppose that $\Ss$ is the ordinary 1-category $\Vect_k$, where $k$ is a perfect field. A smooth and proper algebra object $A$ is then precisely a finite-dimensional semisimple $k$-algebra. \Cref{thm:En-mod} says that every $A$-module whose underlying vector space is dualizable---that is, finite-dimensional---is dualizable as an $A$-module. Equivalently, every finite-dimensional $A$-module is finitely generated and projective. Since $A$ is semisimple, this recovers the classical statement that every finite-dimensional $A$-module is completely reducible.
\end{example}

\paragraph{Oplax convention.}
We formulate our results primarily in terms of oplax natural transformations and right-adjunctibility. Under the relevant dualizability hypotheses, our main result identifies $(n-1)$-times right-adjunctibility with $(n-1)$-times left-adjunctibility; consequently, the corresponding formulations in terms of lax natural transformations are equivalent. One can also obtain formulations in terms of natural transformations corresponding to the general mixed adjunctibility conditions of \cite{SS23}.

We choose oplax as our primary type of natural transformation since, for any closed manifold $M$ of dimension $k<n$, an oplax natural transformation $\beta$ between the truncations of $F_A$ and $F_B$ assigns to $M$ a $(k+1)$-morphism $\beta_M : F_A(M)\to F_B(M)$. For a lax natural transformation $\alpha$, the direction of $\alpha_M$ is reversed for odd $k$. Thus, oplax most closely matches the geometric intuition of a domain wall from $F_A$ to $F_B$.

One reason to prefer lax natural transformations, noted in \cite{JFS15}, is that lax natural transformations $\alpha\colon\unit\Rightarrow\unit$ of the trivial theory $\unit\colon\Bord_{n}\to\Cc$ are equivalent to symmetric monoidal functors $\Bord_{n-1}\to \Omega\Cc:=\Hom_{\Cc}(\unit,\unit)$ \cite[Theorem~7.4]{JFS15}. The oplax analogue instead has target $(\Omega\Cc)^{\mathrm{odd}\text{-}\mathrm{op}}$, in which the direction of all $i$-morphisms with $i$ odd and $i\leq n-1$ is reversed. In the presence of duals and adjoints, these agree, so oplax natural transformations between trivial theories also recover theories of one dimension lower.

\paragraph{Outline of the paper.} The body of the paper is divided into three sections. \Cref{sec:dualizability} proves \Cref{thm:main-dual-result}, the comparison of dualizability conditions on which the other two sections rest. \Cref{sec:defect-cob-hyp} uses it to reformulate the cobordism hypothesis for defects, proving \Cref{intro-thm-1} and \Cref{thm:framed-dom-wall-intro}; to illustrate the roles of tangential structures and defect labels, we begin that section with a detailed treatment of the prototypical defects equipped with arbitrary tangential structures and labeling systems, before passing to the arbitrary singularities of \cite[Section~4.3]{L09}. \Cref{sec:En-application} applies \Cref{thm:main-dual-result} in the higher Morita category, proving \Cref{thm:En-bimod} and \Cref{thm:En-mod}.

\paragraph{Acknowledgments.} The majority of the ideas in this article were developed and presented in my PhD thesis \cite{StePhD}, especially Chapter 8. I would like to thank my advisor, Dan Freed, for his consistent encouragement and guidance throughout this project. I am also especially grateful to Claudia Scheimbauer for her enthusiasm for these ideas and for many helpful discussions. I would also like to thank Ben Haïoun, Lukas Müller, Charlie Reid, Pelle Steffens, Jackson van Dyke, and Richard Wedeen for many motivating and enlightening conversations regarding these ideas over the years. 

During this project, I have been supported by the Simons Collaboration on Global Categorical Symmetries (1013836 and 8528-03), and by the Deutsche Forschungsgemeinschaft  (DFG, German Research Foundation) through the Collaborative Research Center SFB 1085 Higher invariants - 224262486.

\section{Defect dualizability}\label{sec:dualizability}

In this section, we prove \Cref{thm:main-dual-result}, which establishes necessary and sufficient conditions for a $k$-morphism $f$ in a symmetric monoidal $(\infty,N)$-category $\Cc$ to be ambiently $n$-dualizable. In Sections \ref{sec:dual-defs}, \ref{sec:partial-duals} and \ref{sec:Araujo}, we review definitions and results from \cite{L09,Ara17,SS23} regarding duals and adjoints in higher categories. The proof of Theorem \ref{thm:main-dual-result} is provided in Section \ref{sec:def-dual-cond}.

\subsection{Definition of $n$-dualizability}\label{sec:dual-defs}

We first recall some definitions and results regarding duals and adjoints in symmetric monoidal $(\infty,N)$-categories.

\begin{definition}
Let $\Cc$ be a symmetric monoidal $(\infty,N)$-category. An object $X$ in $\Cc$ is dualizable if there exists an object $X^\vee \in \Cc$, along with 1-morphisms
\begin{equation*}
    \ev : X^\vee \otimes X \to \unit \quad \text{and} \quad \coev: \unit \to X \otimes X^\vee
\end{equation*}
in $\Cc$, and invertible 2-morphisms $z_1$ and $z_2$ in $\Cc$ exhibiting the zig-zag relations:
\begin{align*}
    X \xrightarrow{\simeq} \unit \otimes X \xrightarrow{\coev \otimes \Id_X} X \otimes X^\vee \otimes X \xrightarrow{\Id_X \otimes \ev } \unit \otimes X  \xrightarrow{\simeq} X\xRightarrow{z_1} \Id_X  
\end{align*}
and  
\begin{align*}
    {X^\vee} \xrightarrow{\simeq} {X^\vee} \otimes \unit\xrightarrow{\Id_{X^\vee} \otimes \coev} {X^\vee}\otimes X \otimes {X^\vee} \xrightarrow{\ev \otimes \Id_{X^\vee}} {X^\vee} \otimes \unit \xrightarrow{\simeq} X^\vee \xRightarrow{z_2} \Id_{X^\vee} .
\end{align*}
\end{definition}

\begin{definition}
 Let $\Cc$ be an $(\infty,N)$-category and $f:X\to Y$ and $g:Y \to X$ be $k$-morphisms in $\Cc$ for $k \geq 1$. Then $g$ is a \textit{right adjoint} for $f$ (and $f$ is a \textit{left adjoint} for $g$) if there exists a pair of $(k+1)$-morphisms
    \begin{align*}
    u : \Id_{X} \to g \circ f \quad \text{and} \quad v : f \circ g \to \Id_Y
    \end{align*}
    in $\Cc$, and invertible $(k+2)$-morphisms $z_1$ and $z_2$ exhibiting the zig-zag relations:
    \begin{align*}
    f \xrightarrow{\simeq} f \circ \Id_{X} \xrightarrow{\Id_f \circ u } f\circ g \circ f \xrightarrow{v \circ \Id_f} \Id_Y \circ f \xrightarrow{\simeq} f  \xRightarrow{z_1} \Id_{f}  \\
    g \xrightarrow{\simeq} \Id_X \circ g \xrightarrow{u \circ \Id_{g}} g\circ f \circ g \xrightarrow{\Id_{g} \circ v} g \circ \Id_Y \xrightarrow{\simeq} g  \xRightarrow{z_2} \Id_{g}
    \end{align*}
    The statement ``$g$ is a right adjoint for $f$'' is summarized by the notation $f \dashv g$.
\end{definition}

\begin{remark}\label{rem:coherent}
Equivalently, an object $X$ in $\Cc$ is dualizable if it is dualizable as an object of $h_1\Cc$, the symmetric monoidal homotopy $1$-category of $\Cc$, and a $k$-morphism admits a right adjoint if it does so in an appropriate homotopy bicategory. Both notions may also be formulated homotopy coherently, in terms of functors out of the free adjunction; by the homotopy uniqueness of adjunctions \cite[Theorem~4.4.11]{RV16}, the two formulations agree.
\end{remark}

\begin{definition}
Let $f$ be a $k$-morphism in $\Cc$ for $k\geq 1$. Then $f$ is \textit{right-adjunctible} if it admits a right adjoint and \textit{left-adjunctible} if it admits a left adjoint. We say $f$ is \textit{adjunctible} if $f$ admits a right and left adjoint.
\end{definition}

We will also care about the case where a morphism $f$ is part of an infinite sequence of adjunctions
\begin{align*}
    \cdots  \dashv f^{LL} \dashv f^L \dashv f \dashv f^R \dashv f^{RR} \dashv \cdots .
\end{align*}
In other words, there exists (not necessarily distinct) 1-morphisms $\{f_i\}_{i\in \ZZ}$ in $\Cc$ such that $f_0 = f$ and $f_i \dashv f_{i+1}$ for all $i\in \ZZ$. In this case, we say that $f$ admits a \textit{tower of adjunctions}. We make the following iterative definition.

\begin{definition}\label{def:towers-of-adjunctions}
For $m\geq 1$, a $k$-morphism $f$ in $\Cc$ is \emph{$m$-times $t$-adjunctible} if $f$ admits a tower of adjunctions and, if $m > 1$, all of the units and counits witnessing the adjunctions in the tower are themselves $(m-1)$-times $t$-adjunctible.
\end{definition} 

\begin{definition}
Let $\Cc$ be a symmetric monoidal $(\infty,N)$-category and let $n\geq 1$. Then $\Cc$ \textit{has $n$-duals} if the following conditions hold
\begin{enumerate}
    \item All objects in $\Cc$ admit duals.
    \item For $1\leq k < n$, all $k$-morphisms in $\Cc$ are adjunctible. 
\end{enumerate}
\end{definition}

Following \cite{L09}, let $\Cc^{\mathrm{nd}}$ be the maximal subcategory of $\Cc$ with respect to the property of having $n$-duals; an object $X\in\Cc$ is said to be $n$-dualizable if it belongs to $\Cc^{\mathrm{nd}}$. A definition of this subcategory is sketched in the PhD thesis of Araújo \cite{Ara17}; it admits the following characterization, which is implicit in \cite{L09}. Using the homotopy-coherent definitions of \Cref{rem:coherent}, a definition of $\Cc^{\mathrm{nd}}$ is given in \cite{BV26}, where it is shown to be well-defined and to agree with the same characterization \cite[Corollary~2.15]{BV26}.

\begin{lemma}\label{lem:dual-towers}
For $k\geq 1$, a $k$-morphism $f$ in $\Cc$ belongs to $\Cc^{nd}$ if and only if the source and target of $f$ belong to $\Cc^{nd}$ and $f$ is $(n-k)$-times t-adjunctible.
\end{lemma}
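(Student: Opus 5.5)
I would prove both directions by comparing the recursive description of $\Cc^{\mathrm{nd}}$ with the notion of $(n-k)$-times $t$-adjunctibility. First I would fix a model of $\Cc^{\mathrm{nd}}$: following \cite{Ara17,BV26}, take it to be the union of all symmetric monoidal sub-$(\infty,n)$-categories of $\Cc$ with $n$-duals, built from the homotopy-coherent descriptions in \Cref{rem:coherent}. By the homotopy uniqueness of adjunctions \cite[Theorem~4.4.11]{RV16}, the properties ``admits adjoints'' and ``admits a tower of adjunctions'' do not depend on the chosen adjunction data. Throughout, $f$ denotes a $k$-morphism with $k\geq 1$.

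\emph{Necessity.} Suppose $f\in\Cc^{\mathrm{nd}}$. Its source and target are lower-dimensional cells of the subcategory, so they lie in $\Cc^{\mathrm{nd}}$. I would prove the following claim by downward induction on $k$: every $k$-morphism of a category with $n$-duals, with $k<n$, is $(n-k)$-times adjunctible there. The argument has three steps.
\begin{enumerate}
\item Since $f$ is adjunctible and its adjoints are again $k$-morphisms of $\Cc^{\mathrm{nd}}$, we can iterate to get adjoints $f^{L^m}$ and $f^{R^m}$ for all $m$. This gives a tower of adjunctions.
\item The units and counits witnessing these adjunctions can be chosen inside $\Cc^{\mathrm{nd}}$. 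This holds because the subcategory is closed under composition and contains the adjoints, and by the uniqueness of adjunction data any choice is equivalent to one in the subcategory.
\item The units and counits are $(k+1)$-morphisms of $\Cc^{\mathrm{nd}}$, so the induction hypothesis makes them $(n-k-1)$-times adjunctible.
\end{enumerate}
The base case $k=n-1$ only requires a tower, which step 1 already supplies. Adjunctions in the subcategory remain adjunctions in $\Cc$, so this direction is formal.

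\emph{Sufficiency.} Let $\Dd$ be the subcategory whose objects are those of $\Cc^{\mathrm{nd}}$. For each $j\geq 1$, its $j$-morphisms are those whose source and target lie in $\Dd$ and which are $(n-j)$-times adjunctible; for $j\geq n$ there is no condition. I would then show that $\Dd$ is a symmetric monoidal $(\infty,n)$-subcategory with $n$-duals. Maximality then forces $\Dd\subseteq\Cc^{\mathrm{nd}}$, which is the required direction. Three verifications are needed.
\begin{enumerate}
\item[(a)] $\Dd$ contains identities and is closed under composition and $\otimes$. Composites and tensor products of adjunctible morphisms are adjunctible, with adjoint $(gf)^R=f^Rg^R$. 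Their units and counits are built from the units and counits of the factors by whiskering and composing, so the depth of adjunctibility is preserved. This uses induction on $n-j$, together with the fact that whiskering by an identity cell preserves towers.
\item[(b)] Every object of $\Dd$ has a dual, which holds because objects come from $\Cc^{\mathrm{nd}}$. The evaluation and coevaluation maps must also lie in $\Dd$; they do, since they lie in $\Cc^{\mathrm{nd}}$ and are therefore $(n-1)$-times adjunctible by necessity.
\item[(c)] Every $j$-morphism of $\Dd$ with $j<n$ is adjunctible within $\Dd$. The adjoints in its tower are themselves $(n-j)$-times adjunctible: an adjoint in a tower sits in a shifted tower, and its units and counits are the counits and units of the neighbouring adjunctions.
\end{enumerate}

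\emph{Main obstacle.} The step I expect to be hardest is (a). I need that whiskering an $(m)$-times adjunctible cell by an arbitrary cell of $\Dd$ stays $m$-times adjunctible. I need the same for the unit of a composite adjunction, which is the composite $u_f\cdot(f^R u_g f)$. Both claims require the whiskering functors $\Hom(a,b)\to\Hom(a,c)$ to preserve adjunctions. This holds because functors of $(\infty,\cdot)$-categories preserve adjunction data. The unit and counit of a whiskered adjunction are the whiskered unit and counit, so the recursive conditions propagate by induction on the depth $m$. I would handle the coherence of the $n$-duals condition through the homotopy-category formulation, as in \Cref{rem:coherent}, which reduces everything to statements in homotopy bicategories of the hom-categories.
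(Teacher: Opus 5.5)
Your argument is correct. The paper, however, does not prove this lemma: it attributes the characterization to Lurie and Araújo and defers to \cite{BV26} (Corollary~2.15 there). In that reference $\Cc^{\mathrm{nd}}$ is defined using the homotopy-coherent notions of duality and adjunction from Remark~\ref{rem:coherent}, and is shown to be well-defined and to admit exactly this description.

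You instead give a direct argument at the level of homotopy bicategories. Necessity is a downward induction on $k$ inside a subcategory with $n$-duals. Sufficiency checks that the cells satisfying the condition form a symmetric monoidal subcategory $\mathcal{D}$ with $n$-duals, and then invokes maximality. Your necessity argument works for every symmetric monoidal subcategory with $n$-duals, not just $\Cc^{\mathrm{nd}}$. Together with (a)--(c), this shows that $\mathcal{D}$ is itself the maximal such subcategory. So you need not presuppose that the ``union'' of all such subcategories is a subcategory, which is not obvious since it need not be closed under composition.

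What your route buys is transparency. It shows why the last stage must be a tower rather than just a left and a right adjoint: this is what makes $\mathcal{D}$ closed under passing to adjoints in step (c). What the citation buys is the foundational input you leave implicit. First, a replete family of cells closed under composition, whiskering, identities and $\otimes$ must actually define a symmetric monoidal $(\infty,n)$-subcategory. Second, the adjunctibility conditions must be testable in homotopy bicategories. That is the content supplied by \cite{BV26}.

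One terminological correction: you repeatedly write ``$(n-k)$-times adjunctible'' where you mean ``$(n-k)$-times t-adjunctible''. In the paper's conventions these differ at the last stage (Lemma~\ref{lem:adj-2-fully-adj} and the remark after it). With the weaker notion, step (c) would fail for $(n-1)$-morphisms, because the right adjoint of a morphism that merely has a left and a right adjoint need not have a right adjoint itself. Your step~1 and your shifted-tower argument in (c) do use towers, so the mathematics is right once the term is corrected.
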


\subsection{Partial dualizability}\label{sec:partial-duals}

We now consider the case where a morphism admits only specified left or right adjoints, and the resulting units and counits again admit specified left or right adjoints, and so forth. This was introduced in \cite{JFS15}, where either right adjoints or left adjoints are required at every level. The notion of iterated adjunctibility studied in \cite{SS23} allows for arbitrary choices of left and right adjoints at successive stages. These choices are conveniently encoded by a dexterity function.

\begin{definition}\label{def:dexterity-fn}
A \textit{dexterity function of length $n$} is a function
\begin{equation*}
    a^n:\{1,...,n\} \to \{L,R\}.
\end{equation*} 
\end{definition}

For a dexterity function $a^n$ of length $n$ and $k< n$, we denote by $a^{n}_{-k}$ the dexterity function of length $(n-k)$ defined by
\begin{align*}
    a^{n}_{-k}(i) := a^n(i+k).
\end{align*}
We will often denote a dexterity function $a^n$ by the corresponding word on the elements $R$ and $L$. For example, $RL$ corresponds to the dexterity function $a^2(1) = R$ and $a^2(2)=L$, while $R^2$ corresponds to the dexterity function $a^2(1) = R$ and $a^2(2)=R$.

\begin{definition}\label{def:n-times-right-left}
Let $a^n$ be a dexterity function of length $n \geq 1$. Let $f$ be a $k$-morphism in $\Cc$. Then we say that $f$ is \emph{$a^n$-adjunctible} if
\begin{equation*}
    \begin{cases} \text{$f$ is right-adjunctible},  &\text{if $a^n(1)$ =R}  \\
     \text{$f$ is left-adjunctible},  &\text{if $a^n(1)$ =L} 
    \end{cases}
\end{equation*}
and if $n>1$, the unit and counit of the adjunction are themselves $a^{n}_{-1}$-adjunctible.
\end{definition}

\begin{remark}
Setting $a^n = R^n$ in Definition \ref{def:n-times-right-left} recovers the notion of $n$-times right-adjunctibility introduced in \cite{JFS15}. Similarly, $a^n = L^n$ in Definition \ref{def:n-times-right-left} recovers the notion of $n$-times left-adjunctibility.
\end{remark}

The Interchange Lemma provides a mechanism for exchanging left and right adjoints. It was stated in \cite[Remark 3.4.22]{L09}, and a proof can be found in \cite[Lemma 1.4.4]{DSPS18}. 

\begin{lemma}[Interchange Lemma]\label{lem:exchange-lem}
Let $\Cc$ be an $(\infty,N)$-category. Let $f$ be a $k$-morphism that admits a left adjoint $f^L$ with unit $\mu$ and counit $\nu$. If $\mu$ and $\nu$ admit left adjoints $\mu^L$ and $\nu^L$, then $(f \dashv f^L, \nu^L,\mu^L)$ is an adjunction. Similarly, if $\mu$ and $\nu$ admit right adjoints $\mu^R$ and $\nu^R$, then $(f \dashv f^L, \nu^R, \mu^R)$ is an adjunction.   
\end{lemma}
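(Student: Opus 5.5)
The plan is to reduce to a statement about the homotopy bicategory and then obtain the new zig-zag identities by taking adjoints of the old ones. By \Cref{rem:coherent}, an adjunction between $k$-morphisms is detected in an appropriate homotopy bicategory. So I will work in the bicategory whose objects are the $(k-1)$-morphisms between the relevant sources and targets, whose $1$-cells are $k$-morphisms, and whose $2$-cells are homotopy classes of $(k+1)$-morphisms. All $2$-cells of interest there are in turn $1$-cells of the hom-categories, whose $2$-cells are $(k+2)$-morphisms. Concretely, I take the triangle-level data ($\mu,\nu$ and their adjoints) to live in the homotopy $2$-categories of the hom $(\infty,N-k)$-categories. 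Adjunctions among these are again detected homotopically.

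Fix notation: $f^L\dashv f$ with unit $\mu\colon \Id\to f\circ f^L$ and counit $\nu\colon f^L\circ f\to\Id$. The triangle identities provide invertible cells
\[(f\nu)\circ(\mu f)\simeq \Id_f, \qquad (\nu f^L)\circ(f^L\mu)\simeq \Id_{f^L}.\]
The goal is to show that $\nu^L\colon \Id\to f^L\circ f$ and $\mu^L\colon f\circ f^L\to \Id$ satisfy
\[(\mu^L f)\circ(f\nu^L)\simeq \Id_f \quad\text{and}\quad (f^L\mu^L)\circ(\nu^L f^L)\simeq \Id_{f^L}.\]

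The argument has three steps.
\begin{enumerate}
\item Whiskering preserves adjunctions. Horizontal composition with a fixed $k$-morphism, such as $-\circ f$ or $f\circ-$, is a functor of hom $(\infty,\cdot)$-categories, and functors carry adjunction data to adjunction data. Hence $(\mu f)^L\simeq \mu^L f$ and $(f\nu)^L\simeq f\nu^L$, and similarly for whiskering by $f^L$.
\item Taking left adjoints reverses composition and sends equivalences to their inverses. Adjunctions compose, so $(\beta\circ\alpha)^L\simeq \alpha^L\circ\beta^L$. An invertible cell has its inverse as both adjoint. Adjoints are unique up to contractible choice, which makes these identifications well defined.
\item Combine the two. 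Applying $(-)^L$ to the first triangle identity gives
\[\Id_f\simeq \big((f\nu)\circ(\mu f)\big)^L\simeq (\mu f)^L\circ (f\nu)^L\simeq (\mu^L f)\circ(f\nu^L).\]
This is exactly the first zig-zag for $(f\dashv f^L,\nu^L,\mu^L)$. The second zig-zag follows in the same way from the second triangle identity.
\end{enumerate}

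The right-adjoint version is verbatim with $(-)^R$ in place of $(-)^L$: the same three facts hold for right adjoints. The main obstacle I expect is Step 1 together with the coherence in Step 2. One must check that whiskering is genuinely a functor on the hom-categories, so that it transports adjunctions. One must also check that the identification $(\beta\circ\alpha)^L\simeq\alpha^L\circ\beta^L$ is compatible with the specific invertible cells, so that the resulting $(k+2)$-morphisms are invertible as required. Since only the existence of invertible $z_1,z_2$ is needed, the homotopy-bicategory level suffices by \Cref{rem:coherent}, and uniqueness of adjoints \cite{RV16} handles the choices.
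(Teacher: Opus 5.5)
Your proof is correct. Transporting $\mu^L\dashv\mu$ and $\nu^L\dashv\nu$ along the whiskering functors $-\circ f$ and $f\circ-$, and then taking left (resp.\ right) adjoints of the two triangle isomorphisms, is the standard argument for this fact; it uses that adjoints reverse composition and that anything isomorphic to $\Id_f$ has $\Id_f$ as its adjoint. The paper gives no proof of its own and simply cites Lurie's Remark 3.4.22 and DSPS Lemma 1.4.4. The coherence worry you raise at the end is unnecessary: the definition only requires invertible $(k+2)$-morphisms $z_1,z_2$ to exist, and each identification in your chain is such an invertible cell by uniqueness of adjoints.
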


The Interchange Lemma is also an essential ingredient in the work of \cite{SS23}, in which Scheimbauer--Stempfhuber extend the exchange of adjoints to the setting of iterated adjunctibility. We refer to these results as the Even-Even Lemma and the Even-Odd Lemma.

\begin{lemma}[{Even-Even Lemma, Scheimbauer--Stempfhuber \cite[Theorem~3.10(1)]{SS23}}]\label{lem:even-even}
Let $\Cc$ be an $(\infty,N)$-category and let $f$ be a $k$-morphism in $\Cc$. Let $a^n$ and $b^n$ be two dexterity functions of length $n$ such that
\begin{align*}
    |(a^n)^{-1}(R)| \equiv |(b^n)^{-1}(R)| \quad \text{mod 2},
\end{align*}then $f$ is $a^n$-adjunctible if and only if it is $b^n$-adjunctible.
\end{lemma}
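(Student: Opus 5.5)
The plan is to reduce the statement to a local move on dexterity functions and then prove that move with the Interchange Lemma (\Cref{lem:exchange-lem}).

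\emph{Combinatorial reduction.} Two dexterity functions of length $n$ with the same number of $R$'s modulo $2$ differ in an even number of positions. Any such difference is a composite of moves that flip two \emph{adjacent} letters simultaneously. To flip positions $i<j$, flip $(i,i+1)$, then $(i+1,i+2)$, and so on up to $(j-1,j)$; the intermediate letters are flipped twice and return to their original values. So it suffices to prove the following. If $a^n$ and $b^n$ agree except at positions $i,i+1$, where both letters are flipped, then $f$ is $a^n$-adjunctible if and only if it is $b^n$-adjunctible.

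\emph{Reduction to $i=1$.} By \Cref{def:n-times-right-left}, $a^n$-adjunctibility of $f$ means the following: the adjunctions prescribed by $a^n(1),\dots,a^n(i-1)$ exist, and every $(k+i-1)$-morphism appearing as a unit or counit at depth $i-1$ is $a^n_{-(i-1)}$-adjunctible. The same holds for $b^n$. The first $i-1$ letters coincide, so the data up to depth $i-1$ is identical. It therefore suffices to treat the case $i=1$, applied to each of those units and counits. This uses that the (co)units at a given depth do not depend on the later letters.

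\emph{The case $i=1$.} Write $a^n=XYw$ and $b^n=X'Y'w$, where primes denote flipping $L\leftrightarrow R$. There are four cases.

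\begin{itemize}
\item $LLw\Rightarrow RRw$. Let $f$ have left adjoint $f^L$ with unit $\mu$ and counit $\nu$, and let $\mu,\nu$ be $Lw$-adjunctible. By \Cref{lem:exchange-lem}, $f$ is left adjoint to $f^L$ with unit $\nu^L$ and counit $\mu^L$. Hence $f$ is right-adjunctible. Each of $\nu^L,\mu^L$ has a right adjoint, namely $\nu$ and $\mu$. The adjunction $\nu^L\dashv\nu$ has the same unit and counit as the adjunction used to witness left-adjunctibility of $\nu$, only with the roles read from the other side. These (co)units are $w$-adjunctible by hypothesis. So $\nu^L,\mu^L$ are $Rw$-adjunctible, and $f$ is $RRw$-adjunctible.
\item $LRw\Rightarrow RLw$. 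Use the second clause of \Cref{lem:exchange-lem}, with $\nu^R,\mu^R$ as the new unit and counit. These are left-adjunctible (with left adjoints $\nu,\mu$), again with the same $w$-adjunctible (co)units.
\item $RRw\Rightarrow LLw$ and $RLw\Rightarrow LRw$. These follow by the mirror-image argument, or by applying the previous two cases in the $(\infty,N)$-category with $k$-morphisms reversed. That operation swaps left and right adjoints at level $k$ and leaves higher levels unchanged.
\end{itemize}

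Together the four implications cover both directions of every adjacent double flip.

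\emph{Main obstacle.} The delicate step is the bookkeeping in the $i=1$ case. One must check that the (co)units witnessing $\nu^L\dashv\nu$ (respectively $\nu\dashv\nu^R$) are literally the same higher morphisms as those witnessing the original adjunction of $\nu$, and not merely equivalent to them. Only then does $w$-adjunctibility transfer without further argument. One must also check that the choice of adjoints does not matter. I would handle both points using the homotopy uniqueness of adjunctions recorded in \Cref{rem:coherent}, which makes adjunctibility a property rather than structure.
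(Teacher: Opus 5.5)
Your proof is correct. It follows the route the paper indicates: the paper does not reprove the lemma but cites \cite[Theorem~3.10(1)]{SS23}, naming the Interchange Lemma as the essential ingredient, and you use that lemma to flip two adjacent letters and then finish with the parity bookkeeping. The ``main obstacle'' you flag is harmless, because the adjunction $\nu^L\dashv\nu$ that witnesses left-adjunctibility of $\nu$ is literally the same datum that witnesses right-adjunctibility of $\nu^L$.
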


\begin{example}
Suppose that $n=2$, then Lemma \ref{lem:even-even} implies that a 1-morphism $f$ is 2-times right-adjunctible if and only if it is 2-times left-adjunctible. Hence, 2-times left-adjunctible and 2-times right-adjunctible 1-morphisms coincide. However, there is a second notion of twice partially adjunctible 1-morphisms, namely $LR$-adjunctible (or $RL$-adjunctible), which does not coincide with 2-times left/right-adjunctible.
\end{example}

\begin{lemma}[{Even-Odd Lemma,  Scheimbauer--Stempfhuber \cite[Theorem~3.10(2)]{SS23}}]\label{lem:even-odd}
Let $\Cc$ be an $(\infty,n)$-category and let $f$ be a $k$-morphism in $\Cc$. Let $a^n$ and $b^n$ be two dexterity functions of length $n$ such that
\begin{align*}
    |(a^n)^{-1}(R)| + 1  \equiv |(b^n)^{-1}(R)| \quad \text{mod 2},
\end{align*}
then $f$ is $n$-times adjunctible if and only if $f$ is $a^n$-adjunctible and $b^n$-adjunctible.
\end{lemma}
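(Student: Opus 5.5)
We prove the two implications separately. The converse direction goes by induction on $n$, and its main tool is the Even-Even Lemma (\Cref{lem:even-even}), used to move the first letter of a dexterity function.

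Throughout, ``$n$-times adjunctible'' means the two-sided notion: $f$ admits both a left and a right adjoint, and if $n>1$ the units and counits of both adjunctions $f^L\dashv f$ and $f\dashv f^R$ are $(n-1)$-times adjunctible. I will also use that adjunction data are unique up to contractible choice (\Cref{rem:coherent}, \cite[Theorem~4.4.11]{RV16}). Consequently, whether the unit and counit of $f\dashv f^R$ are $c$-adjunctible, for a given dexterity function $c$, is a property of $f$ alone and not of the chosen adjunction.

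\emph{Forward direction.} Suppose $f$ is $n$-times adjunctible and let $c^n$ be any dexterity function. An induction on $n$ shows that $f$ is $c^n$-adjunctible.
\begin{itemize}
\item Both $f^L$ and $f^R$ exist, so the adjoint demanded by $c^n(1)$ exists.
\item The unit and counit of that adjunction are $(n-1)$-times adjunctible.
\item By the inductive hypothesis, they are therefore $c^n_{-1}$-adjunctible.
\end{itemize}
Applying this with $c^n=a^n$ and with $c^n=b^n$ gives the claim.

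\emph{Converse, base case.} For $n=1$, the parity hypothesis says exactly that $\{a^1(1),b^1(1)\}=\{L,R\}$. Hence $f$ has both a left and a right adjoint, which is $1$-times adjunctibility.

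\emph{Converse, inductive step ($n\ge 2$).} The key observation is that flipping the first two letters of a dexterity function changes the number of $R$'s by $0$ or $\pm2$, so it preserves parity. By \Cref{lem:even-even}, $f$ is then adjunctible for the modified function as well. So we may prescribe the first letter of $a^n$ and of $b^n$ freely.
\begin{itemize}
\item \emph{The adjunction $f\dashv f^R$.} Arrange $a^n=R\,a'$ and $b^n=R\,b'$. Then $|a'^{-1}(R)|=|(a^n)^{-1}(R)|-1$ and $|b'^{-1}(R)|=|(b^n)^{-1}(R)|-1$, so $a'$ and $b'$ still have opposite parity. The unit and counit of $f\dashv f^R$ are $a'$-adjunctible (from $a^n$) and $b'$-adjunctible (from $b^n$). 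By the inductive hypothesis they are $(n-1)$-times adjunctible.
\item \emph{The adjunction $f^L\dashv f$.} Arrange instead that both functions begin with $L$, say $a^n=L\,a''$ and $b^n=L\,b''$. Now $a''$ and $b''$ have the same $R$-counts as $a^n$ and $b^n$, hence again opposite parity. The same argument shows that the unit and counit of $f^L\dashv f$ are $(n-1)$-times adjunctible.
\end{itemize}
Together these say that $f$ is $n$-times adjunctible, completing the induction.

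The step needing the most care is the well-definedness used in the inductive step. The adjunction data obtained after rewriting $a^n$ via \Cref{lem:even-even} must be identified with the data coming from $b^n$. Otherwise we would only know that two possibly different units are respectively $a'$- and $b'$-adjunctible. This is exactly where the essential uniqueness of adjunctions is used, together with the fact that adjunctibility is invariant under equivalence of morphisms. With that in place, the remainder is parity bookkeeping.
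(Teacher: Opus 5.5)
Your proof is correct. Note that the paper gives no proof of this lemma: it imports the statement from Scheimbauer--Stempfhuber \cite[Theorem~3.10(2)]{SS23}, and imports the Even-Even Lemma from the same source.

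The difference in your route is that you show part (2) is a formal consequence of part (1). Changing the first two letters of a dexterity function preserves the parity of its $R$-count. So \Cref{lem:even-even} lets you make both $a^n$ and $b^n$ begin with $R$, or both begin with $L$. The inductive hypothesis then applies to the unit and counit of $f\dashv f^R$, respectively of $f^L\dashv f$. You correctly observe that essential uniqueness of adjunction data is what makes the same unit simultaneously $a'$- and $b'$-adjunctible. The gain is a self-contained reduction whose only non-formal input is the Even-Even Lemma, itself an iterated use of the Interchange Lemma \Cref{lem:exchange-lem}. The paper's citation leaves both statements as black boxes.

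Your inductive step can be compressed. Since $a^n$ and $b^n$ lie in opposite parity classes, \Cref{lem:even-even} already shows that $f$ is $c^n$-adjunctible for every dexterity function $c^n$ of length $n$. By an induction like the one in your forward direction, this is equivalent to your recursive notion of $n$-times adjunctibility: take $c^n=R\,c'$ and $c^n=L\,c'$ for all $c'$ of length $n-1$.

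One small point of reading. Your induction lowers the length of the dexterity function while the units and counits stay in the same ambient category. You therefore need the statement for an arbitrary $(\infty,N)$-category, as in \Cref{lem:even-even}, not literally for an $(\infty,n)$-category with $n$ equal to the length. This is evidently the intended reading, and your argument implicitly uses it.
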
 

\begin{example}
Suppose that $n=2$, then Lemma \ref{lem:even-odd} implies that if a 1-morphism $f$ is $RR$-adjunctible as well as $LR$-adjunctible then it is automatically 2-times adjunctible. More generally, let $w^{(n-1)}$ be a dexterity function of length $(n-1)$. If a 1-morphism $f$ is $Rw^{(n-1)}$-adjunctible and $Lw^{(n-1)}$-adjunctible, then it is automatically $n$-times adjunctible.
\end{example}

\subsection{Sufficient conditions for $n$-dualizability}\label{sec:Araujo}

In \cite{L09}, Lurie provides necessary and sufficient conditions for an object $X\in \Cc$ to be 2-dualizable.

\begin{lemma}[{Lurie \cite[Proposition 4.2.3]{L09}}]\label{lem:Lur-2-dual}
Let $\Cc$ be a symmetric monoidal $(\infty,N)$-category and $X\in \Cc$ be an object. Then $X$ is 2-dualizable if and only if the following conditions are satisfied:
\begin{enumerate}
    \item The object $X$ is 1-dualizable.
    \item The evaluation map $\ev_X$ admits a right and left adjoint.
\end{enumerate}
\end{lemma}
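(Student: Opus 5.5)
We prove both directions by identifying $X$ with a member of an explicit symmetric monoidal subcategory with $2$-duals, using \Cref{lem:dual-towers} in the case $n=2$.

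\emph{Necessity.} Suppose $X$ is $2$-dualizable, i.e.\ $X\in\Cc^{2\mathrm{d}}$.
\begin{enumerate}
    \item $X$ is $1$-dualizable, since $\Cc^{2\mathrm{d}}$ has duals for objects.
    \item $\Cc^{2\mathrm{d}}$ is a symmetric monoidal subcategory with duals, so the evaluation $\ev_X : X^\vee\otimes X\to\unit$ can be taken inside $\Cc^{2\mathrm{d}}$. Here I use that duality data is unique up to contractible choice (\Cref{rem:coherent}), so the duality data of $X$ in $\Cc^{2\mathrm{d}}$ agrees with that in $\Cc$.
    \item Every $1$-morphism of $\Cc^{2\mathrm{d}}$ is adjunctible, so $\ev_X$ admits left and right adjoints.
\end{enumerate}

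\emph{Sufficiency.} Assume $X$ is dualizable and $\ev_X$ has both adjoints. I will build a symmetric monoidal subcategory $\Dd\subseteq\Cc$ with $2$-duals containing $X$.
\begin{itemize}
    \item Objects of $\Dd$: the finite tensor products of $X$ and $X^\vee$.
    \item $1$-morphisms of $\Dd$: the $1$-morphisms admitting a full tower of adjunctions.
    \item Higher morphisms of $\Dd$: all of them.
\end{itemize}
Towers of adjunctions are closed under composition ($(g f)^R\simeq f^R g^R$), under tensor product, and under passage to adjoints, and they contain identities and equivalences. Hence $\Dd$ is a symmetric monoidal subcategory in which every $1$-morphism is adjunctible. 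It remains to check that objects of $\Dd$ have duals in $\Dd$. The duality data for a tensor product is assembled from $\ev$, $\coev$ of $X$ and of $X^\vee$ together with symmetries. Also $\ev_{X^\vee}$ and $\coev_{X^\vee}$ are $\coev_X$ and $\ev_X$ up to the symmetry. So it suffices to show that $\ev_X$ and $\coev_X$ admit towers of adjunctions.

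\emph{Reduction to $\ev_X$.} For $\coev_X$, use transposition: taking mates under the duality of $X^\vee\otimes X$ with $X\otimes X^\vee$ gives a functor $f\mapsto f^\vee$. This functor reverses $1$-morphisms and preserves the direction of $2$-morphisms, so it sends $f\dashv g$ to $g^\vee\dashv f^\vee$. Since the transpose of $\ev_X$ is $\coev_X$ up to the symmetry, a tower for $\ev_X$ yields a tower for $\coev_X$.

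\emph{Main step: a tower for $\ev_X$.} We are only given $\ev^L$ and $\ev^R$; we need $\ev^{RR}$, $\ev^{LL}$, and so on. Following Lurie, I would introduce the Serre automorphism $S_X$, defined as a composite of $\ev^R$, the symmetry and $\coev$. I would then show by a zig-zag/mate computation that
\begin{equation*}
\ev^R\simeq \sigma\circ(\Id_X\otimes S_X)\circ\coev_X
\end{equation*}
(with $\sigma$ the symmetry), and that $S_X$ is invertible with inverse built analogously from $\ev^L$. This expresses $\ev^R$ as $\coev$, which has both adjoints, twisted by an equivalence, and therefore gives $\ev^{RR}\simeq\ev\circ(S_X^{-1}\otimes\Id)$ up to symmetry. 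Iterating, every $\ev^{R^m}$ and $\ev^{L^m}$ is $\ev$ or $\coev$ twisted by powers of $S_X$, which produces the full tower.

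This identification of $\ev^R$, together with the invertibility of $S_X$, is the step I expect to be the main obstacle. Once it is in place, every $1$-morphism of $\Dd$ is adjunctible and every object is dualizable in $\Dd$. So $\Dd$ has $2$-duals, and hence $X\in\Dd\subseteq\Cc^{2\mathrm{d}}$ by maximality of $\Cc^{2\mathrm{d}}$.
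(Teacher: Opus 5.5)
Your framework follows Lurie's route, which is also all the paper relies on: the paper does not prove this lemma, but cites Lurie and takes the invertibility of the Serre automorphism from the literature. Necessity, the subcategory $\mathcal{D}$ with tower-admitting $1$-morphisms, the appeal to maximality, and the transposition step (which gives $\coev_X^R\simeq(\ev_X^L)^\vee$ and $\coev_X^L\simeq(\ev_X^R)^\vee$) all work. There are two small slips. First, $\ev_{X^\vee}$ and $\coev_{X^\vee}$ are $\ev_X$ and $\coev_X$ up to symmetry, not the other way round. Second, since $\coev_X$ lands in $X\otimes X^\vee$, your display should read $\sigma\circ(S_X\otimes\Id_{X^\vee})\circ\coev_X$.

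\textbf{The gap.} The main step is announced but not carried out. The identification of $\ev_X^R$ is not where the difficulty lies. Define $S_X$ from $\ev_X^R$, the symmetry and $\ev_X$ (not $\coev_X$). Then one zig-zag shows that $S_X$ is exactly the mate of $\ev_X^R$ under $\Hom_{\Cc}(X,X)\simeq\Hom_{\Cc}(\unit,X^\vee\otimes X)$. Every $1$-morphism $\unit\to X^\vee\otimes X$ is $\coev_X$ twisted by its mate in this way, so this identification is formal. The only non-formal content of the lemma is that $S_X$ is invertible, and for this you give no argument. Without it you have no right adjoint for $\ev_X^R$, hence no tower, and the proof does not close.

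\textbf{How to close it.} The missing idea uses only what you already have: take adjoints of the zig-zag identity $(\Id_X\otimes\ev_X)\circ(\coev_X\otimes\Id_X)\simeq\Id_X$. Both $\ev_X$ and, by your transposition step, $\coev_X$ have right adjoints, so passing to right adjoints gives
\[
(\coev_X^R\otimes\Id_X)\circ(\Id_X\otimes\ev_X^R)\simeq\Id_X .
\]
Let $S'_X$ be the mate of $\ev_X^L$. Substitute $\ev_X^R\simeq\sigma\circ(S_X\otimes\Id_{X^\vee})\circ\coev_X$ and $\coev_X^R\simeq(\ev_X^L)^\vee\simeq\ev_X\circ\sigma\circ(S'_X\otimes\Id_{X^\vee})$. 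One snake cancellation then identifies the left-hand side with $S_X\circ S'_X$. Passing to left adjoints instead gives $(\coev_X^L\otimes\Id_X)\circ(\Id_X\otimes\ev_X^L)\simeq\Id_X$, which identifies in the same way with $S'_X\circ S_X$. So $S_X$ is invertible with inverse $S'_X$, and your iteration then produces the tower.

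One correction in that iteration: $\coev_X^R$ is itself $\ev_X$ twisted by $S_X^{-1}$, so you get $\ev_X^{RR}\simeq\ev_X\circ(\Id_{X^\vee}\otimes S_X^{-2})$ rather than a single power of $S_X^{-1}$. This is harmless. The tower argument only needs that the class of $1$-morphisms of the form ``$\ev_X$ or $\coev_X$ composed with symmetries and equivalences'' is closed under taking both adjoints, and it is.
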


\begin{remark}
In Lemma \ref{lem:Lur-2-dual}, it is implicit that condition (2) is independent of the choice of evaluation map $\ev_X$.
\end{remark}

Necessary and sufficient conditions for an object $X\in \Cc$ to be $n$-dualizable are provided in the PhD thesis of Araújo \cite{Ara17}, which we now review. The following result is part of the proof of \cite[Theorem 4.1.19]{Ara17}. For clarity, we provide a proof. 

\begin{lemma}\label{lem:adj-2-fully-adj}
Let $f$ be a $k$-morphism in $\Cc$ and let $n > k+ 1$. Then $f$ is $(n-k)$-times t-adjunctible if and only if $f$ is $(n-k)$-times adjunctible.
\end{lemma}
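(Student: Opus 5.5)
The plan is to prove the two implications separately, inducting on $m:=n-k\geq 2$. The forward direction should be formal. An $(n-k)$-times $t$-adjunctible morphism $f$ lies in a tower $\cdots\dashv f^L\dashv f\dashv f^R\dashv\cdots$, so it has both a left and a right adjoint. The units and counits of the adjunctions $f^L\dashv f$ and $f\dashv f^R$ occur among the units and counits witnessing the tower, so they are $(m-1)$-times $t$-adjunctible. By induction (or directly, when $m-1=1$) they are then $(m-1)$-times adjunctible, which is what $m$-times adjunctibility of $f$ requires.

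For the converse, suppose $f$ is $m$-times adjunctible with $m\ge 2$. Let $f^L\dashv f$ have unit $\mu$ and counit $\nu$. Since $m\geq 2$, both $\mu$ and $\nu$ are at least $1$-times adjunctible, so they admit left adjoints $\mu^L,\nu^L$ and right adjoints $\mu^R,\nu^R$. The Interchange Lemma (\Cref{lem:exchange-lem}) then shows that $f\dashv f^L$ holds as well, witnessed by $(\nu^L,\mu^L)$, or alternatively by $(\nu^R,\mu^R)$. By uniqueness of adjoints $f^R\simeq f^L$, and we obtain the $2$-periodic tower
\begin{equation*}
\cdots\dashv f\dashv f^L\dashv f\dashv f^L\dashv\cdots,
\end{equation*}
whose witnessing units and counits are drawn from $\{\mu,\nu\}$ and from $\{\mu^L,\nu^L\}$ (or $\{\mu^R,\nu^R\}$). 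It remains to show that each of these $(k+1)$-morphisms is $(m-1)$-times $t$-adjunctible.

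The morphisms $\mu$ and $\nu$ are $(m-1)$-times adjunctible by hypothesis. For $\mu^L$, $\nu^L$, $\mu^R$, $\nu^R$ I would show that an adjoint of an $(m-1)$-times adjunctible morphism is again $(m-1)$-times adjunctible. The idea is to reuse the adjunction data of $\mu$: in $\mu^L\dashv\mu$, the unit and counit of $\mu^L$ are those of $\mu$. The Even-Even and Even-Odd Lemmas (\Cref{lem:even-even}, \Cref{lem:even-odd}) then let us trade the dexterity pattern $L a$ on $\mu$ for the pattern $R a'$ on $\mu^L$, and conversely. When $m-1\geq 2$, the induction hypothesis then upgrades "$(m-1)$-times adjunctible" to "$(m-1)$-times $t$-adjunctible" for all six morphisms, which completes the inductive step.

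I expect the base case $m=2$ to be the main obstacle. There the units $\mu,\nu$ are only $1$-times adjunctible: they have a left and a right adjoint, but nothing is known about their own units, so the Interchange Lemma cannot be applied one level down to produce a full tower for $\mu$. To handle this, I would first examine whether the depth-$1$ condition in \Cref{def:towers-of-adjunctions} is in fact only ever used as "admits adjoints on both sides". This is what \Cref{lem:dual-towers} needs, since the next level consists of $n$-morphisms, on which no condition is imposed. If a genuine tower is required, I would instead exploit the freedom to choose witnesses: the tower for $f$ may use either $(\nu^L,\mu^L)$ or $(\nu^R,\mu^R)$. I would then try to show that $\mu^{RR}$ exists, by identifying it with $\mu^L$ via the Interchange Lemma applied to the data $(\mu\dashv\mu^R)$. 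I would attempt this argument first, and fall back on the weaker reading of the depth-$1$ condition if it fails.
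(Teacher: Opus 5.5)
There is a genuine gap, and it sits in the base case $m=2$. Your forward direction and your inductive step for $m\geq 3$ are fine: the Even--Odd argument does show that an adjoint of a $p$-times adjunctible morphism is $p$-times adjunctible once $p\geq 2$. But the inductive step invokes the statement for $m-1$, so everything rests on $m=2$. There you never show that the $(n-1)$-morphisms $\mu,\nu,\mu^L,\nu^L$ admit towers of adjunctions, and that is the entire content of the lemma.

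Neither of your fallbacks closes this.
\begin{itemize}
\item Reading the depth-one condition as ``has both adjoints'' changes the statement; the remark after the lemma shows the difference is real. It is also not what \Cref{lem:dual-towers} needs. An adjoint of an $(n-1)$-morphism of $\Cc^{nd}$ is again an $(n-1)$-morphism of $\Cc^{nd}$, so it must itself be adjunctible, and this forces full towers.
\item Applying the Interchange Lemma to $(\mu\dashv\mu^R)$ requires adjoints for the unit and counit of $\mu\dashv\mu^R$. These are $n$-morphisms about which nothing is known, which is exactly the obstruction you pointed out yourself.
\item The identification you aim for, $\mu^{RR}\simeq\mu^L$, is also not the right one.
\end{itemize}

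The missing idea is to apply the Interchange Lemma one level up and then use uniqueness of adjunction data. You already noted that $(\nu^L,\mu^L)$ and $(\nu^R,\mu^R)$ both witness the \emph{same} adjunction $f\dashv f^L$. Unit--counit pairs for a fixed pair of adjoint morphisms are unique up to whiskering with an automorphism of $f^L$. So $\nu^R$ agrees with $\nu^L$, and $\mu^R$ with $\mu^L$, up to composing with invertible morphisms.

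Consequently $\nu^R$ acquires a right adjoint from $\nu^L\dashv\nu$, namely an invertible twist of $\nu$. Likewise $\nu^L$ acquires a left adjoint from $\nu\dashv\nu^R$. Iterating gives $2$-periodic towers: $\mu^L\simeq\mu^R$ and $\mu^{RR}\simeq\mu$ up to invertible twists, and similarly for $\nu$.

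This is exactly the paper's argument for the top-level units and counits. With it your base case holds, and the rest of your induction goes through.
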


\begin{proof}
The forward direction is immediate, so we just need to prove the reverse. Let $f$ be a $k$-morphism in $\Cc$ that is $(n-k)$-times adjunctible. Observe that $f$ is 2-times adjunctible since $n-k > 1$. By the Interchange Lemma, a right adjoint of $f$ is also a left adjoint of $f$ and vice versa. So $f$ admits a tower of adjunctions. These observations imply that $f$ is $(n-k-1)$-times t-adjunctible. It remains to show that the units and counits at the top level also admit towers of adjunctions.

Suppose that $u$ and $v$ are the unit and counit for an adjunction $g \dashv g^R$ and that $u$  and $v$ both admit left and right adjoints. By the Interchange Lemma, $(g^R,v^R,u^R)$ and $(g^R,v^L,u^L)$ both provide left adjoint data for $g$. By the standard uniqueness of adjunction data, we have that $v^R$ is equivalent to $v^L$ up to whiskering with invertible morphisms. Similarly, $u^R$ is equivalent to $u^L$ up to whiskering with invertible morphisms. Hence $v^{R}$ and $u^{R}$ admit right adjoints and $v^{L}$ and $u^{L}$ admit left adjoints. This observation can be inductively applied to produce a tower of adjunctions for $u$ and $v$.
\end{proof}

\begin{remark}
This is not true in the case that $n-k=1$. For example, a 1-morphism in a bicategory can have both a left and right adjoint, but not admit a tower of adjunctions.
\end{remark}

The following result of Araújo gives a reduction of the duality data required to show that $X\in \Cc$ is $n$-dualizable. For a proof in the language of dexterity functions, see \cite[Proposition B.5]{SS23}.

\begin{theorem}[{Araújo \cite[Theorem 4.1.19]{Ara17}}]\label{thm:Araujo}
Let $\Cc$ be a symmetric monoidal $(\infty,N)$-category. Then an object $X$ is $n$-dualizable if and only if $X$ is dualizable and one of the following equivalent conditions holds
\begin{enumerate}
    \item $\ev$ and $\coev$ are $(n-1)$-times t-adjunctible.
    \item $\ev$ and $\coev$ are $(n-1)$-times adjunctible.
    \item $\ev$ and $\coev$ are $a^{(n-1)}$-adjunctible for some dexterity function $a^{(n-1)}$ of length $(n-1)$.
\end{enumerate}
\end{theorem}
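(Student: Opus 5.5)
The plan is to prove the cycle of implications
\[
n\text{-dualizable} \Rightarrow (1) \Rightarrow (2) \Rightarrow (3) \Rightarrow (2) \Rightarrow (1) \Rightarrow n\text{-dualizable}.
\]
The nontrivial steps are $(3)\Rightarrow(2)$ and the final implication.

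\textbf{Necessity.} If $X$ is $n$-dualizable, then $X\in\Cc^{\mathrm{nd}}$, and so does its dual $X^\vee$, since $\Cc^{\mathrm{nd}}$ has duals. The unit $\unit$ also lies in $\Cc^{\mathrm{nd}}$. So $\ev$ and $\coev$ are $1$-morphisms of $\Cc^{\mathrm{nd}}$, and \Cref{lem:dual-towers} shows they are $(n-1)$-times t-adjunctible. This is condition (1).

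\textbf{Easy implications among (1), (2), (3).} The implications $(1)\Rightarrow(2)\Rightarrow(3)$ hold by definition; for the second, take for instance $a^{(n-1)}=R^{n-1}$. For $(2)\Rightarrow(1)$ with $n>2$, apply \Cref{lem:adj-2-fully-adj} with $k=1$. The case $n=2$ needs separate treatment, because there \Cref{lem:adj-2-fully-adj} fails. In that case I would avoid passing through (1) at all, and deduce $2$-dualizability directly from (2) using \Cref{lem:Lur-2-dual}. The extra input needed is that adjointness of $\coev$ follows from that of $\ev$; this uses the duality symmetry described next.

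\textbf{The implication $(3)\Rightarrow(2)$: main obstacle.} The key tool is the equivalence $\Cc\simeq\Cc$ that reverses $1$-morphisms between dualizable objects by taking mates (duals) with respect to $\ev$ and $\coev$. Under this equivalence, together with the symmetry isomorphism, $\ev_X$ corresponds to $\coev_{X^\vee}$. Taking the mate of an adjunction $\ev\dashv\ev^R$ reverses the direction of the $2$-cells. Hence it produces an adjunction with $\coev$ on the opposite side, while leaving the higher units and counits unchanged up to this equivalence.

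The concrete steps I would carry out are as follows. First, transport the given $a^{(n-1)}$-adjunctibility of $\ev$ and $\coev$ along the mate equivalence. This should show that $\coev$ is $a'$-adjunctible, where $a'$ is $a^{(n-1)}$ with its first letter flipped, and similarly for $\ev$. Second, note that $\ev$ is then both $a^{(n-1)}$-adjunctible and $a'$-adjunctible. These two dexterity functions have numbers of $R$'s of opposite parity, so the Even--Odd Lemma (\Cref{lem:even-odd}) gives that $\ev$ is $(n-1)$-times adjunctible, and likewise for $\coev$.

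The delicate point is checking that mates genuinely flip only the first letter of the dexterity function, and do not disturb the higher ones. I would try to verify this first in the homotopy $2$-category, and then argue inductively using \Cref{rem:coherent}.

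\textbf{Sufficiency: from (1) to $n$-dualizability.} To show that $X\in\Cc^{\mathrm{nd}}$, I would construct a subcategory $\mathcal D\subseteq\Cc$ as follows.
\begin{itemize}
\item Its objects are the dualizable $Y$ whose $\ev_Y$ and $\coev_Y$ are $(n-1)$-times t-adjunctible.
\item Its $k$-morphisms are the $(n-k)$-times t-adjunctible $k$-morphisms between such objects.
\end{itemize}
One then checks that $\mathcal D$ is a symmetric monoidal subcategory with $n$-duals:
\begin{itemize}
\item adjunctions compose and tensor, with units and counits built from those of the factors by whiskering, so iterated t-adjunctibility is preserved;
\item $\ev$ and $\coev$ lie in $\mathcal D$ by hypothesis;
\item $X^\vee$ and $\unit$ lie in $\mathcal D$.
\end{itemize}
By maximality of $\Cc^{\mathrm{nd}}$, it follows that $X\in\mathcal D\subseteq\Cc^{\mathrm{nd}}$.
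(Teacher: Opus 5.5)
\textbf{Verdict: your proof is correct, and it is a self-contained route rather than the paper's.} The paper does not prove this statement. It cites \cite[Theorem~4.1.19]{Ara17} and \cite[Proposition~B.5]{SS23}, and reproduces only \Cref{lem:adj-2-fully-adj}, which you use in the same way for $(2)\Rightarrow(1)$ when $n>2$. The rest of your argument rebuilds the result from tools the paper does state:
\begin{itemize}
\item necessity from \Cref{lem:dual-towers};
\item the case $n=2$ from \Cref{lem:Lur-2-dual};
\item $(3)\Rightarrow(2)$ from duality together with \Cref{lem:even-odd};
\item sufficiency from maximality of $\Cc^{\mathrm{nd}}$.
\end{itemize}

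Your step $(3)\Rightarrow(2)$ mirrors the paper's proof of \Cref{lem:higher-left-right-lem}. There, Serre automorphisms turn $Rw$-adjunctibility of $f$ into $Lw$-adjunctibility. Here, the identification $\ev_X^\vee\simeq\sigma\circ\coev_X$ (and its mirror for $\coev_X$) does the same job, and the Even--Odd Lemma finishes.

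The point you flag as delicate is already handled by \Cref{lem:check-preserves-adjoints}. The unit and counit of $(f^R)^\vee\dashv f^\vee$ are the images of those of $f\dashv f^R$ under $(-)^\vee$. On hom-categories, $(-)^\vee$ is a composite of whiskerings with identities, $\ev$ and $\coev$, so it is a functor that is covariant on $2$-morphisms and above. Functors preserve $w$-adjunctibility, so only the first letter of the dexterity function flips.

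What your route buys is a proof internal to the paper's toolkit that makes the parallel with the Left--Right mechanism explicit. The citation instead delegates the coherent construction of $\Cc^{\mathrm{nd}}$ to \cite{Ara17,BV26}; your maximality argument relies on that construction implicitly anyway.

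Two small repairs are needed:
\begin{itemize}
\item When defining $\mathcal D$, require the source and target of each $k$-morphism to lie in $\mathcal D$, not only the underlying objects. Otherwise $\mathcal D$ is not closed under source and target, and is not a subcategory.
\item For $n=2$, \Cref{lem:Lur-2-dual} asks only for adjoints of $\ev$, so $(2)$ gives $2$-dualizability with no extra input. The duality symmetry is needed only for $(3)\Rightarrow(2)$. There it uses essentially that $\ev$ and $\coev$ are adjunctible with the same dexterity function: a right adjoint for $\ev$ together with a left adjoint for $\coev$ yields nothing new.
\end{itemize}
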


\begin{remark}
In \cite{JFS15}, an equivalence between 2 and 3 is provided in the case that $a^{(n-1)}$ is either $R^{(n-1)}$ or $L^{(n-1)}$.
\end{remark}

\subsection{Sufficient conditions for ambient $n$-dualizability}\label{sec:def-dual-cond}

This section is devoted to proving Theorem \ref{thm:main-dual-result}. Let $\Cc$ be a symmetric monoidal $(\infty,N)$-category, let $1 \leq k < n$, and let $a^{(n-k)}$ be a dexterity function of length $n-k$.

\ThmMainDual*

By definition, there is a series of implications
\begin{align}
    f \text{ is ambiently $n$-dualizable}
    &\Rightarrow f \text{ is $(n-k)$-times adjunctible} \tag{I-1}\label{eq:implication-1} \\
    &\Rightarrow f \text{ is $a^{(n-k)}$-adjunctible}. \tag{I-2}\label{eq:implication-2}
\end{align}
Theorem \ref{thm:main-dual-result} asserts that, when the source and target of $f$ are ambiently $n$-dualizable, both implications may be reversed.

The proof proceeds in two stages. We first establish Theorem \ref{thm:main-dual-result} for 1-morphisms between $n$-dualizable objects. The key step is a Left-Right Lemma, which states that left and right adjoints are related by Serre automorphisms. We then introduce Folding Lemmas that reduce the general higher-categorical case to the 1-morphism case.

\subsubsection{The bicategory case}

Let $\Cc$ be a symmetric monoidal bicategory and denote its braiding by $\sigma$. Let $X$ be a 2-dualizable object in $\Cc$. Let $(X,X^\vee,\ev_X,\coev_X)$ be duality data in $\Cc$ and let $\ev_X^R$ be a right adjoint for $\ev_X$. Denote the composition $\sigma_{X^\vee,X} \circ \ev_X^R$ by $\widetilde{\ev}_X^R$. The \textit{Serre automorphism of $X$} is defined by
\begin{align*}
    S_X :=  (\widetilde{\ev}_X^R \otimes \Id_{X}) \circ (\Id_{X} \otimes \ev_X) \, ,
\end{align*}
As observed in \cite[Proposition~4.2.3]{L09} and proven rigorously by Pstr{\k{a}}gowski \cite{Pst22}, the Serre automorphism is invertible with inverse given by
\begin{align*}
    S_X^{-1} =  (\widetilde{\ev}_X^L \otimes \Id_{X}) \circ (\Id_{X} \otimes \ev_X) .
\end{align*}
Here, $\widetilde{\ev}^L_X = \sigma_{X^\vee,X}\circ \ev_X^L$, where $\ev^L_X$ is a left adjoint of $\ev_X$. The following lemma shows that the Serre automorphisms of $X$ and $Y$ identify left and right adjoints.

\begin{lemma}[Left-Right Lemma]\label{lem:left-right-lem}
Let $\Cc$ be a symmetric monoidal bicategory. Let $X$ and $Y$ be 2-dualizable objects in $\Cc$. Fix duality data $(X,X^\vee,\ev_X,\coev_X)$, a choice of right and left adjoint for $\ev_X$, and similarly for $Y$. Let $f:X\to Y$ be a 1-morphism in $\Cc$. Then $g$ is a right adjoint to $f$ if and only if
\begin{align}
    S_X^{-1} \circ g \circ S_Y 
\end{align}
is a left adjoint for $f$.
\end{lemma}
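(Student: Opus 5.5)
The plan is to translate the adjunction problem for $f$ into a problem about ordinary morphisms in $\Cc$, using duality to curry. By duality data for $X$ and $Y$, 1-morphisms $X\to Y$ correspond to 1-morphisms $\unit\to X^\vee\otimes Y$, and more usefully 2-morphisms $f\circ h\Rightarrow k$ correspond to 2-morphisms obtained by bending legs with $\ev$ and $\coev$. The concrete route I would follow is the standard ``mate'' computation. The data of a right adjoint $g\dashv$ for $f$ is equivalently a natural equivalence
\[
\Hom(f\circ a, b)\simeq \Hom(a, g\circ b)
\]
of Hom categories, natural in 1-morphisms $a:W\to X$, $b:W\to Y$, and similarly in the other variable. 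I would use duality to rewrite precomposition and postcomposition in terms of each other. For $a:W\to X$, the composite $f\circ a$ corresponds under the equivalence $\Hom(W,Y)\simeq\Hom(Y^\vee\otimes W,\unit)$ to $\ev_Y\circ(\Id_{Y^\vee}\otimes f\circ a)$. This in turn equals $\ev_X\circ(f^\vee\otimes a)$, where $f^\vee:Y^\vee\to X^\vee$ is the dual morphism.

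The key steps, in order, are as follows. First, I would show that $f\dashv g$ if and only if $g^\vee\dashv f^\vee$; this holds because $(-)^\vee$ is a contravariant-on-1-morphisms, covariant-on-2-morphisms equivalence $\Cc\to\Cc^{1\text{-op}}$, and so it swaps left and right adjoints. Second, I would express $f^\vee$ through $f$ using $\ev$ and its adjoints: since $\ev_X\dashv\ev_X^R$, mating gives $\Hom(\ev_X\circ(\alpha\otimes\Id_X),\ev_X\circ(\beta\otimes \Id_X))$ in terms of $\ev_X^R$. Unwinding the zig-zags, the identification $X\simeq X^{\vee\vee}$ implemented by $\widetilde{\ev}_X^R$ differs from the one implemented by $\coev$ precisely by $S_X$. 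Similarly, $\widetilde{\ev}_X^L$ gives $S_X^{-1}$. Concretely, I would check the formula
\[
(h^\vee)^{\text{via }\ev^R} \simeq S_X^{-1}\circ h\circ S_Y
\]
for the double dual of $h:X\to Y$ computed using $\ev^R$ as coevaluation. Third, I would combine these. Right adjunctions $f\dashv g$ correspond to $g^\vee\dashv f^\vee$. Taking duals again, but now using the adjoint $\ev^R$, $\ev^L$ data (which converts the roles of left and right because $\ev\dashv\ev^R$ reverses the unit/counit orientation), this yields $S_X^{-1}gS_Y\dashv f$, that is, $S_X^{-1}\circ g\circ S_Y$ is a left adjoint of $f$. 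The converse follows by running the same argument backwards, using invertibility of $S_X,S_Y$ (from \cite{Pst22}).

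The main obstacle is the second step: the honest bookkeeping that the ``double dual using $\ev^R$ as coevaluation'' is conjugation by the Serre automorphism, with the correct placement of $S_X^{-1}$ versus $S_Y$. I would first do this graphically in the string-diagram calculus for symmetric monoidal bicategories, tracking one braiding $\sigma$ to account for the tilde. As a sanity check on orientation, I would verify the case $f=\Id_X$: there $g=\Id_X$ must give $S_X^{-1}S_X\simeq\Id$, which is consistent. A further check is that in a category where all $S$ are trivial, left and right adjoints must coincide, as in pivotal settings.
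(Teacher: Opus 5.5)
Your outline has the same shape as the paper's proof. You dualize to exchange left and right adjoints ($f\dashv g$ gives $g^\vee\dashv f^\vee$; your justification via the contravariant duality functor is fine). You then return from $X^\vee,Y^\vee$ to $X,Y$ using $\ev$ and its adjoints in place of $\coev$, which is where the Serre automorphisms enter. Your observation that $\widetilde{\ev}_X^R\simeq(S_X\otimes\Id_{X^\vee})\circ\coev_X$ is also correct.

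However, the step that actually does the work is misformulated. The formula $(h^\vee)^{\text{via }\ev^R}\simeq S_X^{-1}\circ h\circ S_Y$ does not typecheck for $h\colon X\to Y$. Moreover, no single twisted re-dualization produces a Serre factor on both sides. Re-dualizing $f^\vee$ with $\widetilde{\ev}_Y^R$ in place of $\coev_Y$ (and $\ev_X$ as usual) gives $S_Y\circ f$. Re-dualizing $g^\vee$ with $\widetilde{\ev}_X^L\simeq(S_X^{-1}\otimes\Id_{X^\vee})\circ\coev_X$ in place of $\coev_X$ (and $\ev_Y$) gives $S_X^{-1}\circ g$. More seriously, ``taking duals again'' cannot mean applying a duality functor: any such functor reverses $1$-morphisms and turns $g^\vee\dashv f^\vee$ back into (a conjugate of) $f\dashv g$. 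Your remark that $\ev\dashv\ev^R$ ``reverses the unit/counit orientation'' names the phenomenon but does not explain why the adjunction comes out exchanged.

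The missing idea is that the second passage is not functorial. It is a composite of adjunctions that uses \emph{different} data on the two sides. The left adjoint $g^\vee$ is re-dualized with the left-hand members of $\ev_Y\dashv\ev_Y^R$ and $\ev_X^L\dashv\ev_X$, and the right adjoint $f^\vee$ with the right-hand members.

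Concretely, tensoring $g^\vee\dashv f^\vee$ with $\Id_Y$ and composing with $\ev_Y\dashv\ev_Y^R$ gives $\ev_Y\circ(g^\vee\otimes\Id_Y)\dashv(f^\vee\otimes\Id_Y)\circ\ev_Y^R$. Then tensor with $\Id_X$ and compose with $\widetilde{\ev}_X^L\otimes\Id_Y\dashv(\ev_X\circ\sigma_{X^\vee,X}^{-1})\otimes\Id_Y$. Since adjunctions are stable under $\otimes$ (Lemma~\ref{lem:adjoint-of-product}) and under composition, this gives an adjunction between a morphism $Y\to X$ and a morphism $X\to Y$. By the computations above, it is $S_X^{-1}\circ g\dashv S_Y\circ f$.

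Only at this point does invertibility enter, and it is needed already in the forward direction. Composing with the adjoint equivalence $S_Y\dashv S_Y^{-1}$ yields $S_X^{-1}\circ g\circ S_Y\dashv f$.

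For the converse, uniqueness of adjoints is not enough, since you do not know in advance that $f$ has a right adjoint. You need the mirror-image argument, as in the paper: start from $S_X^{-1}\circ g\circ S_Y\dashv f$ and exchange the roles of $\ev^L$ and $\ev^R$.
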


\begin{remark}
In the case $f$ is either $\ev_X$ or $\coev_X$, an analogous result is provided in \cite[Proposition 4.2.3]{L09}, a rigorous proof is given in \cite[Theorem 3.9]{Pst22}.
\end{remark}

Before we prove Lemma \ref{lem:left-right-lem}, consider the following motivating example.

\begin{example}\label{ex:left-right-bord}
The example that motivates Lemma \ref{lem:left-right-lem} is the case that $\Cc$ is the 2-dimensional framed domain wall bordism bicategory. Let $A$ and $B$ denote the two regions separated by the domain wall. Take $X = \pt_A^+$ and $Y = \pt_B^+$ to be the positively framed points labeled by regions $A$ and $B$ respectively. The Serre automorphism for $X$ and $Y$ are given by appropriately labeled intervals in which the 2-framing undergoes a single rotation. For example, the Serre automorphism for $X$ and it's inverse are depicted as follows:

{\vspace{6mm}\centering\includegraphics[width=0.9\textwidth]{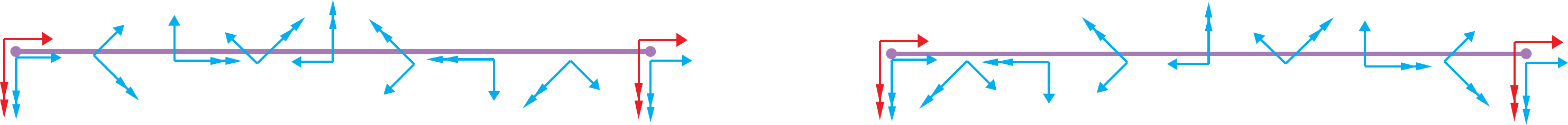}\par\vspace{5mm}}

\noindent We use purple to indicate a labeling by $A$ and yellow a labeling by $B$, and green to indicate the interface. The blue arrows depict the framing vectors. The red arrows indicate the framing of the inflated tangent bundle; these are called arrows of time in \cite{FT21}. The bordisms are read in the direction of the red arrows, left to right for 1-morphisms and down the page for 2-morphisms. Let $f$ be the standard interval that crosses the domain wall from $A$ to $B$:

{\vspace{6mm}\centering\includegraphics[width=0.45\textwidth]{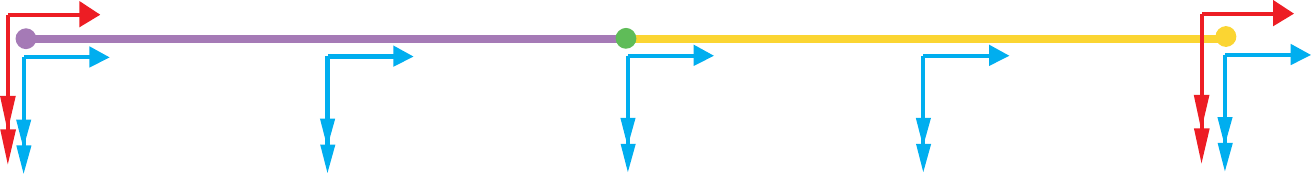}\par\vspace{5mm}}

\noindent Note that the framing at the codimension one defect must be such that the first component is always inward normal to the yellow region (the choice of the first component is a convention; one could equally choose the second component). The right adjoint for $f$ is given by

{\vspace{6mm}\centering\includegraphics[width=0.45\textwidth]{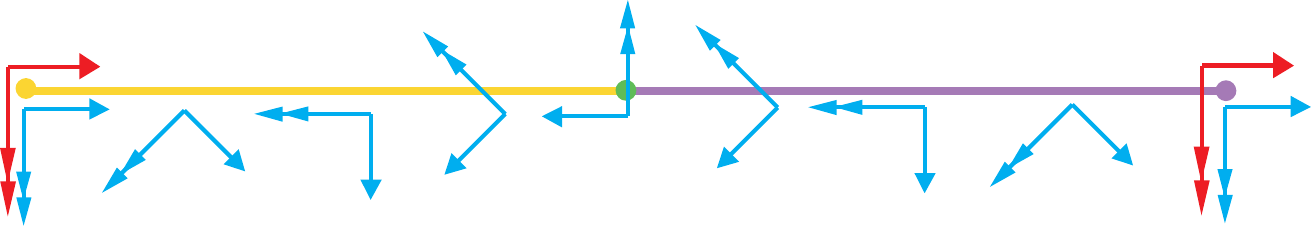}\par\vspace{5mm}}

\noindent Here the framing rotates by a half turn as it approaches the domain wall from either side. The unit and counit are given by the following 2-bordisms:

{\vspace{6mm}\centering\includegraphics[width=0.9\textwidth]{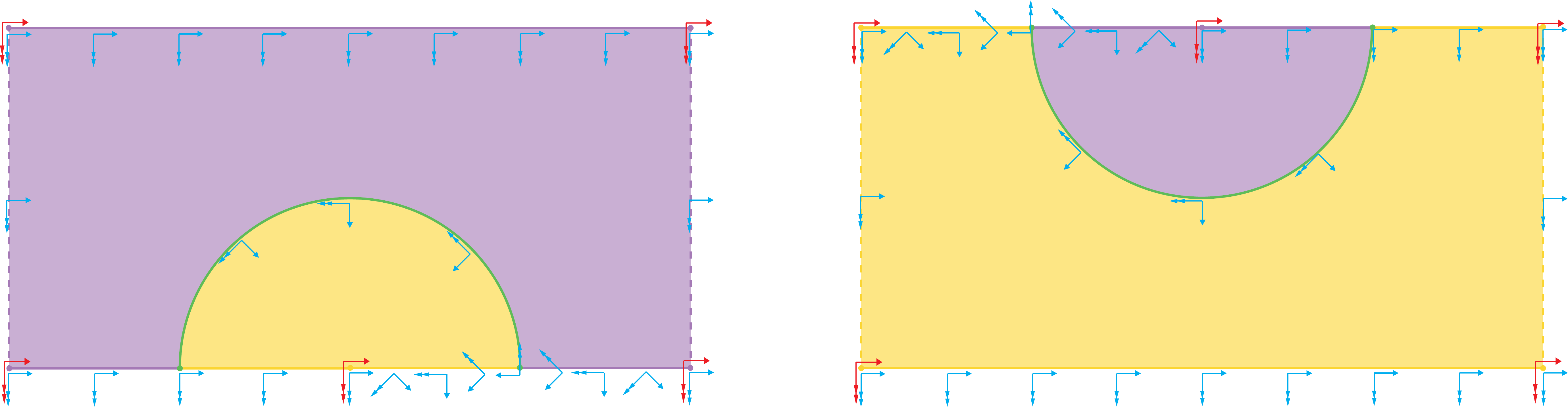}\par\vspace{5mm}}

\noindent Imlpicit in these diagrams is the fact that the 2-framing can be extended to the entire manifold with corners. In contrast, the left adjoint for $f$ is given by

{\vspace{6mm}\centering\includegraphics[width=0.45\textwidth]{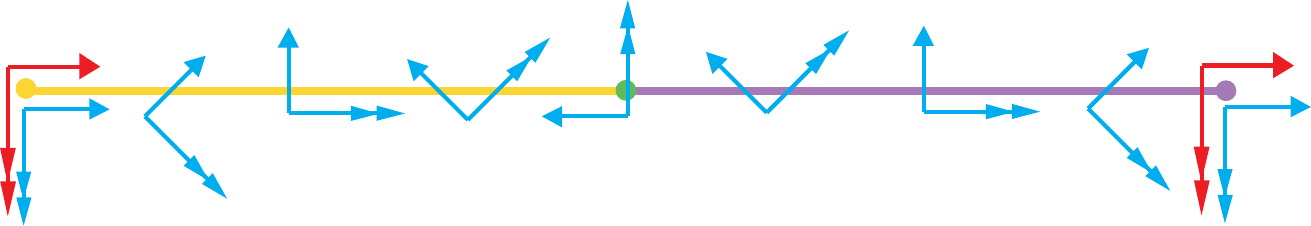}\par\vspace{5mm}}

\noindent The framing again rotates by a half a turn as it approaches the domain wall, but now in the opposite direction to that of the right adjoint. Since the Serre automorphisms provide a complete rotation of the framing, we have that the left and right adjoints are identified by composing with Serre automorphisms:

{\vspace{6mm}\centering\includegraphics[width=0.9\textwidth]{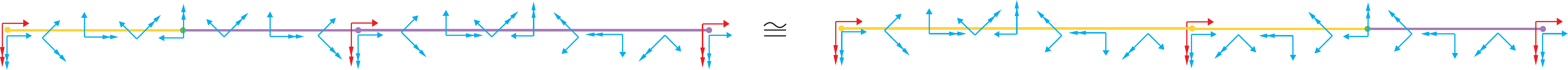}\par\vspace{5mm}}

\end{example}

We will prove the Left-Right Lemma shortly. First, we review some results about adjunctions in symmetric monoidal bicategories.

\begin{lemma}\label{lem:adjoint-of-product}
Let $\Cc$ be a symmetric monoidal bicategory. Let $f$ and $g$ be 1-morphisms in $\Cc$. Let $f^R$ and $g^R$ be right adjoints for $f$ and $g$, respectively. Then $f^R \otimes g^R$ is a right adjoint for $f \otimes g$.

Moreover, the unit (counit) of the adjunction $f \otimes g \dashv f^R \otimes g^R$ is given by the tensor product of the units (counits) of the adjunction $f \dashv f^R$ and $g \dashv g^R$ up to composition with isomorphisms.
\end{lemma}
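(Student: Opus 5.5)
We construct the adjunction data for $f\otimes g \dashv f^R\otimes g^R$ explicitly from the two given adjunctions and then check the zig-zag identities. Write $(u_f, v_f, z_1^f, z_2^f)$ for the adjunction data of $f\dashv f^R$ and $(u_g, v_g, z_1^g, z_2^g)$ for $g\dashv g^R$. Here $f\colon X\to Y$ and $g\colon X'\to Y'$.

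\textbf{Candidate unit and counit.} The tensor product $\otimes\colon \Cc\times\Cc\to\Cc$ is a (pseudo)functor of bicategories. It therefore comes with invertible 2-cells
\begin{equation*}
\phi\colon (f^R\circ f)\otimes(g^R\circ g)\xrightarrow{\simeq}(f^R\otimes g^R)\circ(f\otimes g)
\quad\text{and}\quad
\phi_0\colon \Id_{X\otimes X'}\xrightarrow{\simeq}\Id_X\otimes\Id_{X'},
\end{equation*}
and similarly on the $Y$ side. We define the unit as
\begin{equation*}
u := \phi\circ(u_f\otimes u_g)\circ\phi_0 .
\end{equation*}
The counit $v$ is defined analogously from $v_f\otimes v_g$, using the compositors of $\otimes$ in the other order. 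By construction, these agree with the tensor products of the original units and counits up to composition with isomorphisms. This is exactly the ``moreover'' clause of the statement.

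\textbf{Zig-zag identities.} Consider the composite
\begin{equation*}
f\otimes g \Rightarrow (f\otimes g)\circ(f^R\otimes g^R)\circ(f\otimes g) \Rightarrow f\otimes g .
\end{equation*}
Using the pseudofunctoriality coherences of $\otimes$ (naturality of $\phi$ and compatibility with associators and unitors), we rewrite it as the image under $\otimes$ of the pair of zig-zag composites for $f$ and for $g$, conjugated by coherence isomorphisms. The invertible 2-cell $z_1^f\otimes z_1^g$, transported along these isomorphisms, then supplies the first zig-zag witness. The second zig-zag witness is obtained in the same way from $z_2^f\otimes z_2^g$.

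\textbf{Main obstacle.} The only real work is the coherence bookkeeping in the previous step. The cleanest route is to use strictification: every symmetric monoidal bicategory is equivalent to a strict (Gray-type) one in which $\otimes$ is strictly functorial on vertical composition of 2-cells. In that setting the interchange law gives
\begin{equation*}
(\alpha\otimes\beta)\circ(\alpha'\otimes\beta')=(\alpha\circ\alpha')\otimes(\beta\circ\beta'),
\end{equation*}
and the zig-zag composite for $f\otimes g$ literally equals the tensor product of the zig-zag composites for $f$ and $g$.

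Alternatively, we can avoid coherence entirely. The pair $(f\dashv f^R,\ g\dashv g^R)$ is an adjunction in the product bicategory $\Cc\times\Cc$, and any pseudofunctor preserves adjunctions. Applying $\otimes$ to it yields the adjunction $f\otimes g\dashv f^R\otimes g^R$, with unit and counit the images of $(u_f,u_g)$ and $(v_f,v_g)$ up to the structure isomorphisms of the pseudofunctor. We will present this second argument, since it is the most economical.
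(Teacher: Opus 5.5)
Your final argument is exactly the paper's proof. You view $(f\dashv f^R,\ g\dashv g^R)$ as a single adjunction in $\Cc\times\Cc$, push it forward along the pseudofunctor $\otimes$, and read off the unit and counit as the images of $(u_f,u_g)$ and $(v_f,v_g)$ up to the structure isomorphisms of $\otimes$. (The strictification aside is not needed and is slightly overstated: in a Gray monoid $\otimes$ is only cubical, so the zig-zag composite for $f\otimes g$ agrees with the tensor of the zig-zags only up to interchangers, not literally.)
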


\begin{proof}
Let $(f\dashv  f^R, u_f^R,v_f^R)$ and $(g\dashv g^R,u_g^R,v_g^R)$ be adjunction data in $\Cc$. Then $(f \times g \dashv f^R \times g^R,u_f^R\times u_g^R,v_f^R \times v_g^R)$ is adjunction data in $\Cc \times \Cc$. Since $\otimes : \Cc \times \Cc \to \Cc$ is a functor of bicategories $f^R \otimes g^R$ is a right adjoint of $f \otimes g$.

Moreover, the unit (counit) of the adjunction is given by the image of $u_f^R\times u_g^R$ ($v_f^R \times v_g^R$) under the functor $\otimes$, composed with structural isomorphisms for $\otimes$.
\end{proof}

Let $f:X\to Y$ be a 1-morphism in $\Cc$ and fix choices $(X,X^\vee,\ev_X,\coev_X)$ and $(Y,Y^\vee,\ev_Y,\coev_Y)$ of duality data in $\Cc$. The \textit{dual} $f^\vee:X^\vee \to Y^\vee$ for $f$ is given by
\begin{equation*}
\begin{tikzcd}[column sep=small]
	{Y^\vee} && {Y^\vee \otimes X \otimes X^\vee} && {Y^\vee \otimes Y \otimes X^\vee} && {X^\vee}
	\arrow["{\Id \otimes \coev_X}", from=1-1, to=1-3]
	\arrow["{\Id \otimes f \otimes \Id}", from=1-3, to=1-5]
	\arrow["{\ev_Y \otimes\Id}", from=1-5, to=1-7]
\end{tikzcd}
\end{equation*}
The isomorphism class of $f^\vee$ does not depend on the choices of duality data. Moreover, $(f^\vee)^\vee \cong f$.

\begin{lemma}\label{lem:check-preserves-adjoints}
Let $\Cc$ be a symmetric monoidal bicategory. Let $(X,X^\vee,\ev_X,\coev_X)$ and $(Y,Y^\vee,\ev_Y,\coev_Y)$ be duality data in $\Cc$. Let $f:X\to Y$ be a 1-morphism in $\Cc$ with right adjoint $f^R$, then $f^\vee$ is a right adjoint for $(f^R)^\vee$.

Moreover, the unit (counit) of the adjunction $(f^R)^\vee \dashv f^\vee$ is given by the unit (counit) of the adjunction $f \dashv f^R$ up to tensor product and composition with isomorphisms.
\end{lemma}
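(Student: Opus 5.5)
The strategy is to recognize $(-)^\vee$ as a $2$-functor $\Cc\to\Cc^{\mathrm{op}}$ on the full sub-bicategory of dualizable objects, and then use that such a functor carries adjunctions to adjunctions with the sides swapped. Concretely, fix the duality data for $X$ and $Y$, and an adjunction $(f\dashv f^R, u, v)$ with
\[
u:\Id_X\Rightarrow f^R\circ f,
\qquad
v:f\circ f^R\Rightarrow \Id_Y .
\]
Here $f^R:Y\to X$, so $(f^R)^\vee:X^\vee\to Y^\vee$ with the paper's formula (source $Y^\vee$ for $f$, so morphisms are reversed). Note the paper writes $f^\vee:X^\vee\to Y^\vee$ but the displayed composite goes $Y^\vee\to X^\vee$. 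I will use the displayed composite, under which $f^\vee:Y^\vee\to X^\vee$ and $(f^R)^\vee:X^\vee\to Y^\vee$, so that $(f^R)^\vee\dashv f^\vee$ typechecks.

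First, I need the 2-morphisms. Dualization is functorial on 2-morphisms by whiskering: for $\alpha:g\Rightarrow h$ set
\[
\alpha^\vee:=(\ev\otimes\Id)\circ(\Id\otimes\alpha\otimes\Id)\circ(\Id\otimes\coev).
\]
There are also canonical invertible 2-cells $\Id_X^\vee\cong\Id_{X^\vee}$, which are exactly the zig-zag isomorphisms $z_2$, and $(g\circ h)^\vee\cong h^\vee\circ g^\vee$. The latter comes from inserting $\coev\circ$ and $\ev\circ$ and contracting a zig-zag via $z_1$ or $z_2$ in the middle. With these, define the candidate unit of $(f^R)^\vee\dashv f^\vee$ as
\[
\Id_{X^\vee}\cong \Id_X^\vee\xRightarrow{u^\vee}(f^R\circ f)^\vee\cong f^\vee\circ (f^R)^\vee ,
\]
and the counit as
\[
(f^R)^\vee\circ f^\vee\cong (f\circ f^R)^\vee\xRightarrow{v^\vee}\Id_Y^\vee\cong\Id_{Y^\vee}.
\]
The directions check: $u^\vee$ goes from $\Id^\vee$ to $(f^Rf)^\vee$ because whiskering is covariant on 2-cells. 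This already gives the ``moreover'' clause, since these are $u$ and $v$ tensored with identities and composed with structural isomorphisms (coherence isomorphisms, $z_i$, and the unitors).

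Second, I verify the zig-zag identities. Each triangle for the new data is the $(-)^\vee$-image of the corresponding triangle for $(u,v)$, conjugated by the composition-compatibility isomorphisms. It therefore reduces to the old triangle identity together with the fact that the comparison isomorphisms $(g h)^\vee\cong h^\vee g^\vee$ are coherent: associative and unital, i.e.\ $(-)^\vee$ forms a pseudofunctor $\Cc^{\mathrm{dual}}\to(\Cc^{\mathrm{dual}})^{\mathrm{op}}$ contravariant on 1-cells only. Since a pseudofunctor $F:\Cc\to\Dd^{\mathrm{op}}$ (1-cell reversal) sends an adjunction $f\dashv f^R$ to $F(f^R)\dashv F(f)$, the statement follows.

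The main obstacle is precisely this coherence of the pseudofunctor $(-)^\vee$. I would avoid a direct check by invoking strictification: by coherence for symmetric monoidal bicategories we may assume $\Cc$ is a strict Gray-monoid (or rely on Remark \ref{rem:coherent} and work in a homotopy bicategory). Alternatively, I would use string-diagram calculus, where the comparison isomorphisms are snake-straightenings and coherence is planar isotopy invariance. Another option is to choose the duality data to be coherent (adjoint-equivalence-like, with the swallowtail condition), which is always possible by modifying $z_2$. This makes the composites of $z_1,z_2$ appearing in the triangles cancel. In the write-up I would take the string-diagram route, and draw the two triangle identities as the old triangles with snakes attached and then straightened.
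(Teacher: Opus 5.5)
Your argument is correct, but it does not follow the paper's route.

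\textbf{The paper's route.} The paper views $f$ as an object of the oplax arrow category $\Cc^{\to}$ and invokes \cite[Proposition~7.10]{JFS15}: a morphism $g\colon X^\vee\to Y^\vee$ is dual to $f$ in $\Cc^{\to}$ iff $g^\vee\simeq f^R$. So $(f^R)^\vee$ is dual to $f$, and by symmetry $f$ is dual to $(f^R)^\vee$. Applying the same criterion again gives $((f^R)^\vee)^R\simeq f^\vee$. The unit and counit are then read off as the 2-cells filling the coevaluation and evaluation squares of this duality in $\Cc^{\to}$.

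\textbf{Your route.} You formalize directly the ``familiar idea'' the paper mentions in the remark before its proof. Namely, $(-)^\vee$ is a pseudofunctor reversing 1-cells but not 2-cells, so it sends $f\dashv f^R$ to $(f^R)^\vee\dashv f^\vee$. The unit is $u^\vee$ and the counit is $v^\vee$, each conjugated by comparison isomorphisms. Your reading of $f^\vee$ as $Y^\vee\to X^\vee$ is also the one under which the paper's own proof typechecks.

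\textbf{Comparison.} Your argument is self-contained and never uses the braiding. It also gives explicit unit and counit formulas of the kind later needed in \eqref{eq:left-right-unit} and \eqref{eq:left-right-counit}. The paper's route is shorter because all coherence is outsourced to \cite{JFS15}.

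\textbf{Caution on your coherence step.} Strictifying to a Gray monoid does not make the cusp 2-cells $z_1,z_2$ trivial. The unit axiom of your pseudofunctor, which pairs the comparison $\Id^\vee\cong\Id$ (a $z_2$) with the composition comparison (built from $z_1$), is a swallowtail-type condition relating $z_1$ and $z_2$. Planar-isotopy invariance of the string calculus likewise presupposes coherent duals. So of your three options, the one that actually carries the argument is re-choosing $z_2$ so that the swallowtail equations hold, as in \cite{Pst22}. This is legitimate, since the lemma fixes only $\ev$ and $\coev$, and $f^\vee$ depends only on these. Alternatively, with arbitrary cusps your triangle composites are still invertible. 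Such a unit--counit pair can always be corrected to genuine adjunction data by composing with invertible 2-cells, which the ``up to isomorphisms'' clause of the statement permits.
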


\begin{remark}
Conceptually, Lemma \ref{lem:check-preserves-adjoints} expresses the familiar idea that passing to duals is a contravariant functor and therefore exchanges left and right adjoints. The proof below formalizes this observation using the oplax arrow category and \cite[Proposition~7.10]{JFS15}.
\end{remark}

\begin{proof}
By \cite[Proposition 7.10]{JFS15}, a 1-morphism $g:X^\vee \to Y^\vee$ in $\Cc$ is a dual to $f$ as objects in the oplax arrow category $\Cc^{\to}$ if and only if $g^\vee \simeq f^R$. Hence, $(f^R)^\vee$ is a dual to $f$ in $\Cc^\to$. Since $\Cc^{\to}$ is symmetric monoidal, $f$ is also a dual to $(f^R)^\vee$. It follows that $f^\vee$ is a right adjoint for $(f^R)^\vee$.

Moreover, the evaluation 1-morphism for $(f^R)^\vee$ as a dual to $f$ in $\Cc^\to$ is a diagram in $\Cc$ of the form
\begin{equation*}
\begin{tikzcd}
	{X \otimes X^\vee} && {Y \otimes Y^\vee} \\
	\\
	\unit && \unit
	\arrow["{f \otimes (f^R)^\vee}", from=1-1, to=1-3]
	\arrow["{\ev_X}"', from=1-1, to=3-1]
	\arrow["v", shorten <=12pt, shorten >=12pt, Rightarrow, from=1-3, to=3-1]
	\arrow["{\ev_Y}", from=1-3, to=3-3]
	\arrow["\Id"', from=3-1, to=3-3]
\end{tikzcd}
\end{equation*}
where $v$ is given by the counit of the adjunction $f\dashv f^R$ up to composition and tensor product with isomorphisms. Similarly, the coevaluation 1-morphism for $(f^R)^\vee$ as a dual to $f$ in $\Cc^\to$ is given by a diagram in $\Cc$ of the form
\begin{equation*}
    \begin{tikzcd}
	\unit && \unit \\
	\\
	{X \otimes X^\vee} && {Y \otimes Y^\vee}
	\arrow["\Id", from=1-1, to=1-3]
	\arrow["{\coev_X}"', from=1-1, to=3-1]
	\arrow["u", shorten <=12pt, shorten >=12pt, Rightarrow, from=1-3, to=3-1]
	\arrow["{\coev_Y}", from=1-3, to=3-3]
	\arrow["{f \otimes (f^R)^\vee}"', from=3-1, to=3-3]
\end{tikzcd}
\end{equation*}
where $u$ is given by the unit of the adjunction $f\dashv f^R$ up to composition and tensor product with isomorphisms. Since, up to composition with isomorphisms, the same 1-morphisms in $\Cc^\to$ describe $f$ as a dual to $(f^R)^\vee$, the unit (counit) of the adjunction $(f^R)^\vee \dashv f^\vee$ is given by the unit (counit) of the adjunction $f \dashv f^R$ up to tensor product and composition with isomorphisms.
\end{proof}

We will also use other standard results regarding left and right adjoints in bicategories. In particular, the inverse of an invertible 1-morphism is both a left and right adjoint for that 1-morphism, and the composition of two right adjoints is a right adjoint for the composition.

\begin{proof}[Proof of Lemma \ref{lem:left-right-lem}]
Suppose that $f \dashv f^R$. By Lemma \ref{lem:check-preserves-adjoints} we have that $(f^R)^\vee \dashv f^\vee$. Since identities are their own adjoints, Lemma \ref{lem:adjoint-of-product}  gives $(f^R)^\vee \otimes \Id_Y \dashv f^\vee \otimes \Id_Y$. Since $\ev_Y^R$ is a right adjoint of $\ev_Y$, and right adjoints compose, we have
\begin{align}\label{eq:first-adj-rel}
    \ev_Y \circ ((f^R)^\vee \otimes \Id_{Y}) \dashv (f^\vee \otimes \Id_{Y})\circ \ev_Y^R
\end{align}
In terms of string diagrams, relation \eqref{eq:first-adj-rel} is depicted by
\begin{equation*}
   \begin{tikzpicture}[scale=0.6]
        \draw (-2,-5.1) -- (-2,0);
        \draw (0,0) arc (0:180:1);
        \node[anchor=south] at (-1,1) (A) {$\ev_X$};
        \draw (0,0) -- (0,-0.5);
        \draw (-0.5,-0.5) rectangle (0.5,-1.5);
        \draw(0,-1.5) -- (0,-2);
        \node at (0,-1) {$f^R$};
        \draw (2,-2) -- (2,0);
        \draw (0,-2) arc (180:360:1);
        \node[anchor=north] at (1,-3) (B) {$\coev_Y$};
        \draw (2,0) -- (2,2);
        \draw (2,2) arc (180:0:1);
        \node[anchor=south] at (3,3) (C) {$\ev_Y$};
        \draw (4,-5.1) -- (4,2);
        \draw[-<] (-2,-5.5) -- (-2,-5);
        \node[anchor=south east] at (-2,-5.5) {$X$};
        \draw[->] (4,-5.5) -- (4,-5);
        \node[anchor=south east] at (4,-5.5) {$Y$};

        \draw (5.7,-1) -- (6.3,-1);
        \draw (6.3,-0.8) -- (6.3,-1.2);

        \draw (8,-2) -- (8,2);
        \draw (10,2) arc (0:180:1);
        \node[anchor=south] at (9,3) (A) {$\text{ev}_Y$};
        \draw (10,2) -- (10,1.5);
        \draw (9.5,1.5) rectangle (10.5,0.5);
        \draw(10,0.5) -- (10,0);
        \node at (10,1) {$f$};
        \draw (12,0) -- (12,2);
        \draw (10,0) arc (180:360:1);
        \node[anchor=north] at (11,-1) (B) {$\text{coev}_X$};
        \draw (12,2) -- (12,4.1);
        \draw[<-] (12,4) -- (12,4.5);
        \node[anchor=north east] at (12,4.5) {$X$};
        \draw (8,-2) arc (180:360:2.5);
        \node[anchor=north] at (10.5,-4.5) (C) {$\ev^R_Y$};
        \draw (13,-2) -- (13,4.1);
        \node[anchor=north west] at (13,4.5) {$Y$};
        \draw[>-] (13,4) -- (13,4.5);
    \end{tikzpicture}
\end{equation*}
Composing with the adjunction $\ev_X^L \dashv \ev_X$, and applying Lemma \ref{lem:adjoint-of-product} and compatibility of right adjoints with composition, gives
\begin{equation}\label{eq:adjoints-for-left-right-lem}
    (\Id_X \otimes (\ev_Y \circ ((f^R)^\vee \otimes \Id_Y))) \circ (\widetilde{\ev}_X^L \otimes \Id_{Y}) \dashv (\widetilde{\ev}_X \otimes \Id_Y) \circ (((\Id_{Y} \otimes f^\vee) \circ \ev_Y^R) \otimes \Id_Y)
\end{equation}
which in terms of string diagrams is depicted by
\begin{equation*}
    \begin{tikzpicture}[scale=0.5]
        %%%% First picture %%%
        \draw (-2,-4) -- (-2,0);
        \draw (0,0) arc (0:180:1);
        \node[anchor=south] at (-1,1) (A) {$\ev_X$};
        \draw (0,0) -- (0,-0.5);
        \draw (-0.5,-0.5) rectangle (0.5,-1.5);
        \draw(0,-1.5) -- (0,-2);
        \node at (0,-1) {$f^R$};
        \draw (2,-2) -- (2,0);
        \draw (0,-2) arc (180:360:1);
        \node[anchor=north] at (1,-3) (B) {$\coev_Y$};
        \draw (2,0) -- (2,2);
        \draw (2,2) arc (180:0:1);
        \node[anchor=south] at (3,3) (C) {$\ev_Y$};
        \draw (4,-8) -- (4,2);
        \draw[->] (4,-8.5) -- (4,-8);
        \node[anchor=south east] at (4,-8.5) {$Y$};

        \cross(-4,-4)(-2,-6);
        
        \draw (-4,-6) arc (180:360:1);
        \node[anchor=north] at (-3,-7) (D) {$\text{ev}^L_X$};
        
        \draw (-4,-4) -- (-4,8.1);
        \draw[>-] (-4,8) -- (-4,8.5);
        \node[anchor=north west] at (-4,8.5) {$X$};

        %%% adjoint symbol %%%
        \draw (6.2,-1) -- (6.8,-1);
        \draw (6.8,-0.8) -- (6.8,-1.2);

        %% second picture %%%
        \draw (10,-4) -- (10,0);
        \draw (12,0) arc (0:180:1);
        \node[anchor=south] at (11,1) (A) {$\text{ev}_Y$};
        \draw (12,0) -- (12,-0.5);
        \draw (11.5,-0.5) rectangle (12.5,-1.5);
        \draw(12,-1.5) -- (12,-2);
        \node at (12,-1) {$f$};
        \draw (14,-2) -- (14,0);
        \draw (12,-2) arc (180:360:1);
        \node[anchor=north] at (13,-3) (B) {$\text{coev}_X$};
        \draw (14,0) -- (14,2);
        
        \draw (10,-4) arc (180:360:2.5);
        \node[anchor=north] at (12.5,-6.5) (C) {$\ev^R_Y$};
        
        \draw (15,-4) -- (15,8.1);
        \node[anchor=north west] at (15,8.5) {$Y$};
        \draw[>-] (15,8) -- (15,8.5);

        \cross(9,4)(14,2);
        
        \draw (9,4) arc (180:0:2.5);
        \node[anchor=south] at (11.5,6.5) (D) {$\ev_X$};
        
        \draw (9,-8) -- (9,2);
        \draw[<-] (9,-8) -- (9,-8.5);
        \node[anchor=south east] at (9,-8.5) {$X$};
    \end{tikzpicture}
\end{equation*}
It remains to identify the two sides of \eqref{eq:adjoints-for-left-right-lem} with expressions involving the Serre automorphisms. By expanding the definition of $(f^R)^\vee$ and applying a cusp isomorphism for $Y$, the left hand side of \eqref{eq:adjoints-for-left-right-lem} is seen to be isomorphic to
\begin{equation*}
   (\Id_X \otimes \ev_X) \circ (\widetilde{\ev}_X^L \otimes f^R)  \cong S_X^{-1} \circ f^R .
\end{equation*}
Similarly, by expanding the definition of $f^\vee$ and applying a cusp isomorphism for $X$, the right hand side of \eqref{eq:adjoints-for-left-right-lem} is seen to be isomorphic to
\begin{equation*}
     (\Id_{Y} \otimes \ev_X) \circ (\widetilde{\ev}_{Y}^R \otimes f) \cong  S_Y \circ f .
\end{equation*}
It follows that $S_X^{-1} \circ f^R \circ S_Y$ is a left adjoint for $f$. The converse follows by applying the same arguments to the adjunction $S_X^{-1} \circ f^R \circ S_Y \dashv f$.
\end{proof}

In proving the Left-Right Lemma, we did not specify the data of a unit and counit of adjunction; we just showed that they exist. However, Lemma \ref{lem:adjoint-of-product} and \ref{lem:check-preserves-adjoints} both provide a description of the unit and counit up to compositions and tensor products with isomorphisms. Unwinding our arguments, we see that the unit for the adjunction $S_X^{-1} \circ f^R \circ S_Y \dashv f$ is given by
\begin{align}
\begin{split}\label{eq:left-right-unit}
    \Id_{Y} &\xrightarrow{``u^L_{ev_X}"} (\ev_X \circ \ev_X^L) \otimes \Id_Y  \\ 
    &\xrightarrow{``u^R_f"} (\ev_X \circ (\Id_{X^\vee} \otimes (f^R \circ f)) \circ \ev_X^L) \otimes \Id_Y \\
    &\xrightarrow{``u^R_{\ev_Y}"} f \circ S_X^{-1} \circ  f^R \circ S_Y .
\end{split}
\end{align} 
The notation $``u^R_f"$ indicates that the 2-moprhism is given by $u^R_f$ up to compositions and tensor products with isomorphisms. Similarly, the counit is given by
\begin{align}
\begin{split}\label{eq:left-right-counit}
    S_X^{-1} \circ f^R \circ S_Y \circ f &\xrightarrow{``v_{\ev_X}^L"} (\ev_Y \circ (\Id_{Y^\vee} \otimes (f\circ f^R)) \circ \ev_Y^R) \otimes \Id_X \\
    &\xrightarrow{``v_f^R"} (\ev_Y^R \circ \ev_Y) \otimes \Id_X \\
    &\xrightarrow{``v^R_{\ev_Y}"} \Id_{X}.
\end{split}
\end{align}
These observations will be important when extending the Left-Right Lemma to higher categories.

\begin{corollary}\label{cor:left-right-cor}
Let $\Cc$ be a symmetric monoidal bicategory. Let $f:X\to Y$ be a 1-morphism and suppose that $X$ and $Y$ are 2-dualizable. Then the following are equivalent
\begin{enumerate}[label=\arabic*)]
    \item $f$ is ambiently 2-dualizable.
    \item $f$ is adjunctible.
    \item $f$ admits a right adjoint.
    \item $f$ admits a left adjoint.
\end{enumerate}
\end{corollary}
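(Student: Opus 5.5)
The plan is to prove the implications $1)\Rightarrow 2)\Rightarrow 3)\Rightarrow 4)\Rightarrow 1)$, with one auxiliary implication $3)\Rightarrow 2)$ used along the way; only the passage between one-sided and two-sided adjunctibility uses the Left-Right Lemma.

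For $1)\Rightarrow 2)$: by \Cref{lem:dual-towers} with $n=2$ and $k=1$, an ambiently $2$-dualizable $f$ is $1$-times t-adjunctible. In particular it admits a tower of adjunctions, so it has both a left and a right adjoint. The implication $2)\Rightarrow 3)$ holds by definition.

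For $3)\Rightarrow 2)$: suppose $f \dashv f^R$. Since $X$ and $Y$ are $2$-dualizable, choose duality data and left and right adjoints of $\ev_X$ and $\ev_Y$. These give the Serre automorphisms $S_X$ and $S_Y$, which are invertible by Pstr{\k{a}}gowski's result. Applying \Cref{lem:left-right-lem} with $g=f^R$ shows that $S_X^{-1}\circ f^R\circ S_Y$ is a left adjoint of $f$, so $f$ is adjunctible. The argument for $4)\Rightarrow 2)$ is symmetric: given $f^L \dashv f$, the converse direction of \Cref{lem:left-right-lem} says that $g:=S_X\circ f^L\circ S_Y^{-1}$ is a right adjoint, because $S_X^{-1}\circ g\circ S_Y\simeq f^L$ is a left adjoint.

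To close the cycle, I must show $2)\Rightarrow 1)$ (and hence $4)\Rightarrow 1)$ via $4)\Rightarrow 2)$). By \Cref{lem:dual-towers}, it suffices to produce a full tower of adjunctions for $f$, since in a bicategory $2-1=1$ and no condition is imposed on the units and counits. This is the main obstacle. The remark after \Cref{lem:adj-2-fully-adj} warns that two-sided adjunctibility does not in general yield a tower, so the Serre automorphisms must be used.

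The key observation is that $f^R$ is again a $1$-morphism between $2$-dualizable objects, namely $Y\to X$. Applying the previous step to $f^R$, which has the left adjoint $f$, produces a right adjoint $f^{RR}$. Iterating gives $f^{R^m}$ for all $m$. Symmetrically, applying the step to $f^L$, which has the right adjoint $f$, gives $f^{LL}$, and so on. Concretely, one can write the tower explicitly as $f^{R^{2m}}\simeq S_Y^{m}\circ f\circ S_X^{-m}$ (up to conventions), with the odd terms obtained by conjugating $f^R$. Adjunctions are preserved under conjugation by invertible $1$-morphisms, which makes the tower evident.

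Combining these steps, $f$ lies in $\Cc^{\mathrm{2d}}$, which proves the equivalence of all four conditions. The only subtle point to check is that the Left-Right Lemma may be applied to $1$-morphisms in either direction between $X$ and $Y$, which is immediate because both objects are $2$-dualizable.
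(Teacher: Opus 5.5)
Your proof is correct and is essentially the paper's argument: the Left-Right Lemma converts a one-sided adjoint into the other, and applying it repeatedly (to $f^R$, $f^L$, and so on, all $1$-morphisms between $2$-dualizable objects) gives a tower of adjunctions by conjugating with Serre automorphisms, after which Lemma~\ref{lem:dual-towers} gives ambient $2$-dualizability. The paper orders the implications as $1)\Rightarrow 2)\Rightarrow 4)\Rightarrow 3)\Rightarrow 1)$ and writes the tower down directly; your even-term formula $f^{R^{2m}}\simeq S_Y^{m}\circ f\circ S_X^{-m}$ is the type-correct form of the paper's even terms, which have $S_X$ and $S_Y$ interchanged.
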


\begin{proof}
The implications $(1) \Rightarrow (2)$ and $(2) \Rightarrow (4)$ are immediate from the definitions. The Left-Right Lemma gives the implication $(4) \Rightarrow  (3)$. To see that $(3) \Rightarrow (1)$, repeatedly apply the Left-Right Lemma to produce a tower of adjunctions
\begin{align*}
    f_i = \begin{cases} (S_X)^{i/2} \circ f \circ (S_Y^{-1})^{i/2} & i \text{ even}\\
    (S_X)^{(i-1)/2} \circ f^{R} \circ (S_Y^{-1})^{
    (i-1)/2} & i \text{ odd}.
     \end{cases}
\end{align*}
By Lemma \ref{lem:dual-towers}, $f$ is ambiently 2-dualizable.
\end{proof}

\subsubsection{Higher ambient dualizability for 1-morphisms}

Let $\Cc$ be a symmetric monoidal $(\infty,N)$-category. We now prove Theorem \ref{thm:main-dual-result} in the case of a 1-morphism $f:X\to Y$ such that $X$ and $Y$ are $n$-dualizable objects in $\Cc$. First, we show the converse to implication \eqref{eq:implication-2}.

\begin{lemma}[Higher Left-Right Lemma]\label{lem:higher-left-right-lem}
Let $\Cc$ be a symmetric monoidal $(\infty,N)$-category and let $X$ and $Y$ be $n$-dualizable objects in $\Cc$. Let $a^{(n-1)}$ be a dexterity function of length $n-1$. Let $f:X \to Y$ be a 1-morphism in $\Cc$. The following are equivalent
\begin{enumerate}[label=(\arabic*)]
    \item $f$ is $(n-1)$-times adjunctible.
    \item $f$ is $a^{(n-1)}$-adjunctible.
\end{enumerate}
\end{lemma}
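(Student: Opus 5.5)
The direction (1)$\Rightarrow$(2) is immediate from the definitions, so the content is (2)$\Rightarrow$(1). I would argue by induction on $n$, using the Even-Even and Even-Odd Lemmas to reduce to a single ``flip'' of the first letter of the dexterity function. The base case $n=2$ is Corollary \ref{cor:left-right-cor}, applied in the homotopy bicategory $h_2\Cc$, which sees adjunctions of 1-morphisms by Remark \ref{rem:coherent}. That corollary shows that a one-sided adjoint of $f$ yields adjoints on both sides.

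For general $n$, write $a^{(n-1)} = x\,w$, where $x\in\{L,R\}$ and $w$ is a dexterity function of length $n-2$. By Lemma \ref{lem:even-odd}, it suffices to show that $f$ is also $\bar x\,w$-adjunctible, where $\bar x$ is the opposite letter: the parity of the number of $R$'s then differs by one, so $f$ becomes $(n-1)$-times adjunctible. Say $x=R$, so $f\dashv f^R$ with unit $u$ and counit $v$ that are $w$-adjunctible. By the Left-Right Lemma, $f^L := S_X^{-1}\circ f^R\circ S_Y$ is a left adjoint of $f$. The unit and counit of $f^L \dashv f$ are given explicitly by \eqref{eq:left-right-unit} and \eqref{eq:left-right-counit}. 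Each is a composite of the following 2-morphisms, up to tensoring and composing with invertible morphisms:
\begin{itemize}
\item the unit or counit $u,v$ of $f\dashv f^R$;
\item a unit or counit of the adjunctions $\ev_X^L\dashv \ev_X$ and $\ev_Y\dashv\ev_Y^R$.
\end{itemize}
Since $X$ and $Y$ are $n$-dualizable, Lemma \ref{lem:dual-towers} and Theorem \ref{thm:Araujo} make the latter units and counits $(n-2)$-times t-adjunctible, hence in particular $w$-adjunctible.

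To conclude, I need the class of $w$-adjunctible morphisms to be closed under three operations: whiskering and tensoring with invertible morphisms, tensoring with identities, and composition. Composition of two $w$-adjunctible 2-morphisms is the subtle point. For a single letter it is the standard fact that adjoints compose. For the recursion, the unit and counit of a composite adjunction are themselves composites of whiskerings of the constituent units and counits, so an induction on the length of $w$ applies.

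The main obstacle is that whiskering by a non-invertible 1-morphism, such as $f$ or $\ev$, appears in these composites. Whiskering is a functor on hom-categories, so it preserves adjunctions and, recursively, $w$-adjunctibility. I would make this precise by proving an auxiliary lemma: for any $(\infty,N)$-functor, and in particular for $-\circ h$, $h\circ -$ and $-\otimes h$, the image of an $a$-adjunctible morphism is $a$-adjunctible. This holds because functors preserve adjunction data together with its units and counits. With this lemma the unit and counit of $f^L\dashv f$ are $w$-adjunctible, so $f$ is $L\,w$-adjunctible, and the Even-Odd Lemma finishes the argument. The case $x=L$ is symmetric, using the converse direction of the Left-Right Lemma. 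Note that this argument uses no induction on $n$ beyond the closure lemma.
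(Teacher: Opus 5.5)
Your proposal is correct and is essentially the paper's proof. You obtain $f^L = S_X^{-1}\circ f^R\circ S_Y$ from the Left-Right Lemma, and use \eqref{eq:left-right-unit} and \eqref{eq:left-right-counit} to see that its unit and counit are built from the $w$-adjunctible unit and counit of $f\dashv f^R$ together with the units and counits of the $\ev$-adjunctions (which lie in $\Cc^{nd}$); this makes $f$ $Lw$-adjunctible, and the Even-Odd Lemma finishes. The induction on $n$ you announce is unnecessary, as you note yourself, and your closure lemma (functors such as whiskering and tensoring preserve $a$-adjunctibility) usefully spells out a step the paper only asserts.
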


\begin{remark}
Taking the dexterity function $a^{(n-1)} = R^{n-1}$ (resp. $L^{n-1}$), condition (2) is that $f$ is $(n-1)$-times right (resp. left) adjunctible.
\end{remark}

\begin{proof}
The implication $(1) \Rightarrow (2)$ is immediate from the definitions. We must show the implication $(2) \Rightarrow (1)$. Let $w=a^{(n-1)}_{-1}$ denote the truncated dexterity function obtained by deleting the first entry of $a^{(n-1)}$, and assume without loss of generality that $a^{(n-1)}(1)=R$. Then $f$ admits a right adjoint $f^R$. By the Left-Right Lemma \ref{lem:left-right-lem} we have that $f$ also admits a left adjoint $f^L$. Note that the unit and counit of the adjunction $f \dashv f^R$ are $w$-adjunctible by Definition \ref{def:n-times-right-left}. We claim that the unit and counit of the adjunction $f^L \dashv f$ are also $w$ adjunctible.

As shown in \eqref{eq:left-right-unit}, the unit of the adjunction $f^L \dashv f$ is given by taking compositions and tensor products of isomorphisms, along with the unit for the adjunction $f\dashv f^R$, the unit for the adjunction $\ev_X^L \dashv \ev_X$, and the unit for the adjunction $\ev_Y \dashv \ev_Y^R$. Since $X$ and $Y$ are $n$-dualizable by assumption, their $n$-dualizability data belongs to the maximal $n$-dualizable subcategory $\Cc^{nd}$ of $\Cc$. In particular, the units for the adjunctions $\ev_X^L \dashv \ev_X$ and $\ev_Y \dashv \ev_Y^R$ belong to $\Cc^{nd}$ and therefore $w$-adjunctible. Since $f$ is $a^{(n-1)}$-adjunctible, the unit $u_f^R$ is $w$-adjunctible by definition.  Hence, the unit for the adjunction $f^L \dashv f$ is given by taking compositions and tensor products of $w$-adjunctible morphisms, and is therefore $w$-adjunctible. By the same reasoning, the counit of the adjunction $f^L \dashv f$ is $w$-adjunctible.

Let $Rw$ (resp. $Lw$) denote the dexterity function obtained by prepending $R$ (resp. $L$) to $w$. By definition $f$ is $a^{(n-1)} = Rw$-adjunctible. We have shown that the unit and counit of the adjunction $f^L \dashv f$ are $w$-adjunctible.  So $f$ is also $Lw$-adjunctible. Clearly $|(Rw)^{-1}(R)| +1 \equiv |(Lw)^{-1}(R)| \mod{2}$, so by the Even-Odd Lemma of \cite{SS23}, we have that $f$ is $(n-1)$-times adjunctible.
\end{proof}

It remains to show that implication \eqref{eq:implication-1} can be reversed for 1-morphisms whose source and target are $n$-dualizable.

\begin{lemma}\label{lem:duality-induct-up}
Let $\Cc$ be a symmetric monoidal $(\infty,N)$-category and $X$ and $Y$ be $n$-dualizable objects in $\Cc$. Then a 1-morphism $f:X\to Y$ in $\Cc$ is ambiently $n$-dualizable if and only if $f$ is $(n-1)$-times adjunctible . 
\end{lemma}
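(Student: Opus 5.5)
By \Cref{lem:dual-towers}, since $X$ and $Y$ lie in $\Cc^{nd}$, the map $f$ is ambiently $n$-dualizable exactly when it is $(n-1)$-times t-adjunctible. So the lemma reduces to showing that $(n-1)$-times adjunctibility implies $(n-1)$-times t-adjunctibility. The forward implication \eqref{eq:implication-1} is immediate, and only this converse needs work.

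When $n-1>1$, the converse is \Cref{lem:adj-2-fully-adj} applied with $k=1$: an $(n-1)$-times adjunctible $k$-morphism with $n>k+1$ is $(n-1)$-times t-adjunctible. That lemma never uses $X$ and $Y$, so this case is done.

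The remaining case is $n=2$. Here \Cref{lem:adj-2-fully-adj} fails, as the following remark in the paper points out, and the $2$-dualizability of $X$ and $Y$ must be used. I would pass to the homotopy bicategory $h_2\Cc$, where adjunctions and towers of adjunctions are detected (\Cref{rem:coherent}), and argue exactly as in \Cref{cor:left-right-cor}. Given a right adjoint $f^R$, iterating the Left-Right Lemma \ref{lem:left-right-lem} gives the tower $f_i$ built from conjugates by powers of $S_X$ and $S_Y$. Since the Serre automorphisms are invertible, every conjugate is again right-adjunctible, so the tower extends infinitely in both directions. This shows $f$ is $1$-times t-adjunctible, and \Cref{lem:dual-towers} then gives ambient $2$-dualizability.

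The one step needing care is that the notion of $(n-1)$-times t-adjunctibility appearing in \Cref{lem:dual-towers} requires a tower at the top level, not just left and right adjoints. The argument therefore has to supply such a tower, and it does so in both cases: by the inductive tower construction in \Cref{lem:adj-2-fully-adj} when $n>2$, and by the Serre-twisted tower when $n=2$.
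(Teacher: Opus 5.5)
Your argument matches the paper's: the forward direction is immediate, the case $n\geq 3$ is Lemma~\ref{lem:adj-2-fully-adj} with $k=1$, and the case $n=2$ uses the Serre-twisted tower of Corollary~\ref{cor:left-right-cor} (from the Left-Right Lemma) followed by Lemma~\ref{lem:dual-towers}. The only thing you leave out is the trivial case $n=1$, where every 1-morphism between dualizable objects is already ambiently $1$-dualizable and there is nothing to prove.
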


\begin{proof}
The forward direction is immediate. For $n=1$, the reverse direction holds since all 1-morphisms $f:X\to Y$ are ambiently 1-dualizable. For $n=2$, Corollary \ref{cor:left-right-cor} implies that $f$ admits a tower of adjunctions and is therefore ambiently 2-dualizable. The case that $n\geq 3$ is a general result given in Lemma \ref{lem:adj-2-fully-adj}, following arguments of \cite{Ara17}.
\end{proof}

\subsubsection{Higher ambient dualizability for k-morphisms}

To complete the proof of Theorem \ref{thm:main-dual-result}, we need to prove that the reverse implications of \eqref{eq:implication-1} and  \eqref{eq:implication-2} also hold for $k>1$. The strategy is to reduce statements about $k$-morphisms to statements about $(k-1)$-morphisms. Folding Lemma~I uses duality of objects to identify morphisms $X\to Y$ with morphisms $\unit\to Y\otimes X^\vee$. Folding Lemma~II then uses adjunction data to identify $2$-morphisms between such morphisms with 1-morphisms in the looping $\Omega\Cc := \Hom_{\Cc}(\unit,\unit)$. The Folding Lemmas state that, given sufficient dualizability of the source and target, a morphism enjoys the same dualizability as its mate. Similar results are also used in \cite{Hai23}.

Let $\Cc$ be a symmetric monoidal $(\infty,N)$-category and let $n\geq 2$. Let $X$ and $Y$ be $n$-dualizable objects in $\Cc$. Let $(X,X^\vee,\ev_X,\coev_X)$ be duality data for $X$. Then, $\coev_X$ induces a functor
\begin{align*}
    \Phi_1: \Hom_{\Cc}(X,Y) &\xrightarrow{\simeq} \Hom_{\Cc}(\unit,Y\otimes X^\vee)   \label{eq:folding-functor-1} \\
    c &\mapsto (c \otimes \Id_{X^\vee}) \circ \coev_X \ .
\end{align*}

\begin{lemma}[Folding Lemma I]\label{lem:gen-fold-1}
The functor $\Phi_1$ is an equivalence. Moreover, a $k$-morphism $f$ in $\Cc$ that belongs to $\Hom_{\Cc}(X,Y)$ is ambiently $n$-dualizable (resp. $a^{(n-k)}$-adjunctible) in $\Cc$ if and only if $\Phi_1(f)$ is ambiently $n$-dualizable (resp. $a^{(n-k)}$-adjunctible) in $\Cc$.
\end{lemma}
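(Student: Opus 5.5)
We prove that $\Phi_1$ is an equivalence by writing down an explicit inverse and checking it against the zig-zag isomorphisms. The inverse candidate is
\begin{align*}
    \Psi_1: \Hom_{\Cc}(\unit,Y\otimes X^\vee) &\to \Hom_{\Cc}(X,Y)\\
    d &\mapsto (\Id_Y\otimes \ev_X)\circ (d\otimes \Id_X),
\end{align*}
with the unitors suppressed. Both $\Phi_1$ and $\Psi_1$ are functors of $(\infty,N-1)$-categories: each is built from pre- and post-composition with fixed $1$-morphisms and from tensoring with identities. For the composites:
\begin{itemize}
    \item $\Psi_1\circ\Phi_1(c)\cong c\circ\big((\Id_X\otimes \ev_X)\circ(\coev_X\otimes\Id_X)\big)$, and the zig-zag $z_1$ identifies this with $c$;
    \item $\Phi_1\circ\Psi_1(d)\cong \big((\Id_Y\otimes\ev_X\otimes\Id_{X^\vee})\circ(\Id_Y\otimes \Id_{X^\vee}\otimes \coev_X)\big)\circ d$, where $\coev_X$ is inserted on the $X^\vee$ strand, and $z_2$ identifies this with $d$.
\end{itemize}
Homotopy coherence of these identifications follows from \Cref{rem:coherent}: it suffices to exhibit the equivalence in the homotopy category. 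Alternatively, $\Phi_1$ is the functor induced by the adjunction $-\otimes X^\vee \dashv -\otimes X$ on the internal hom, and that adjunction is itself an equivalence datum.

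The dualizability statement follows once one knows that an equivalence of $(\infty,N-1)$-categories preserves and reflects adjunctions. Indeed, functors send adjunction data (unit, counit and zig-zag cells) to adjunction data, and applying this to $\Psi_1$ gives reflection. Hence, for a $k$-morphism $f$ in $\Hom_{\Cc}(X,Y)$, the adjunctions of $\Phi_1(f)$ are exactly the images of the adjunctions of $f$, and likewise their units and counits. An induction on the length of the dexterity function then shows that $f$ is $a^{(n-k)}$-adjunctible if and only if $\Phi_1(f)$ is. The same induction applies to towers of adjunctions, so $f$ is $(n-k)$-times t-adjunctible if and only if $\Phi_1(f)$ is.

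For ambient $n$-dualizability we use \Cref{lem:dual-towers}. A $k$-morphism lies in $\Cc^{nd}$ if and only if its source and target do and it is $(n-k)$-times t-adjunctible. We argue by induction on $k$.
\begin{itemize}
    \item For $k=1$, the source and target of $f$ are $X$ and $Y$, while those of $\Phi_1(f)$ are $\unit$ and $Y\otimes X^\vee$. All four are $n$-dualizable, because $\Cc^{nd}$ is a symmetric monoidal subcategory containing $X$, $Y$ and the duals of its objects.
    \item For $k>1$, the source and target of $f$ are $(k-1)$-morphisms in $\Hom_\Cc(X,Y)$. By the inductive hypothesis they lie in $\Cc^{nd}$ if and only if their images under $\Phi_1$ do.
\end{itemize}
Combined with the adjunctibility statement above, this gives the claim.

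The main obstacle is the $k=1$ case. There $\Phi_1(f)$ is no longer in $\Hom_\Cc(X,Y)$ in a way that makes "$f\in\Cc^{nd}$" automatic from the source and target. What is needed is that $\coev_X$ lies in $\Cc^{nd}$, so that when $f\in\Cc^{nd}$ the composite $\Phi_1(f)$, built from $f$, $\Id_{X^\vee}$ and $\coev_X$ by tensor products and composition, also lies in $\Cc^{nd}$. We would justify this from the $n$-dualizability of $X$: since $X\in\Cc^{nd}$ and $\Cc^{nd}$ has duals, its duality data can be chosen inside $\Cc^{nd}$, and by uniqueness of duality data every choice is equivalent to this one. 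I expect the case $k=1$, with $n-1$ levels of adjunctibility, to be handled cleanly by the equivalence argument, since the conditions are intrinsic to the hom-category $\Hom_\Cc(X,Y)\simeq\Hom_\Cc(\unit,Y\otimes X^\vee)$.
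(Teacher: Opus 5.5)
Your argument is sound for $k\geq 2$. There $f$ is a $(k-1)$-morphism of $\Hom_{\Cc}(X,Y)$ with $k-1\geq 1$, so its adjoints, units and counits all live in $\Hom_{\Cc}(X,Y)$, and the equivalence $\Phi_1$ with inverse $\Psi_1$ transports them in both directions. This is essentially the paper's ``compose/whisker with $\coev_X$'' argument, stated functorially.

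The gap is the case $k=1$. You say it is ``handled cleanly by the equivalence argument, since the conditions are intrinsic to the hom-category''. They are not. A right adjoint of a $1$-morphism $f\colon X\to Y$ is a $1$-morphism $Y\to X$, and its unit and counit live in $\Hom_{\Cc}(X,X)$ and $\Hom_{\Cc}(Y,Y)$. Likewise, the adjunction data of $\Phi_1(f)\colon\unit\to Y\otimes X^\vee$ lies outside $\Hom_{\Cc}(\unit,Y\otimes X^\vee)$. So an equivalence of hom-$(\infty,N-1)$-categories says nothing about adjunctibility of their objects.

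Your argument also never uses that $X$ is $n$-dualizable, and without that hypothesis the $k=1$ claim is false. Take $Y=X$ and $f=\Id_X$, which is $a^{(n-1)}$-adjunctible. Then $\Phi_1(\Id_X)\simeq\coev_X$, which need not admit any adjoint when $X$ is merely $1$-dualizable. Your treatment of ambient $n$-dualizability combines \Cref{lem:dual-towers} with this same adjunctibility claim. So its $k=1$ case, which is the base of your induction on $k$, inherits the problem.

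The missing ingredient is the one you already isolate in your last paragraph, and it is exactly the paper's argument.
\begin{itemize}
\item Since $X\in\Cc^{nd}$, the $1$-morphisms $\ev_X$ and $\coev_X$ lie in $\Cc^{nd}$, and hence are $a^{(n-1)}$-adjunctible.
\item Both $\Cc^{nd}$ and the class of $a^{(n-1)}$-adjunctible $1$-morphisms are closed under composition and under tensoring with identities. For the composite adjunction, the unit and counit are built by whiskering and composing the given ones, so one inducts on the length of the dexterity function.
\item The forward direction then follows from $\Phi_1(f)=(f\otimes\Id_{X^\vee})\circ\coev_X$.
\item The converse follows from $f\simeq(\Id_Y\otimes\ev_X)\circ(\Phi_1(f)\otimes\Id_X)$.
\end{itemize}
This handles both ambient $n$-dualizability and $a^{(n-1)}$-adjunctibility at $k=1$. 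Your last paragraph only treats the forward direction of the ambient statement; the converse needs $\ev_X\in\Cc^{nd}$ in the same way.
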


\begin{proof}
An inverse to $\Phi_1$ is given by
\begin{align*}
    \Phi_1^{-1}:\Hom_{\Cc}(\unit,Y\otimes X^\vee)   &\to \Hom_{\Cc}(X,Y),\\
                   c' &\mapsto   (\Id_Y \otimes \ev_X) \circ (c' \otimes \Id_X)  ,
\end{align*}
where we omit structural isomorphisms. The fact that this is an inverse follows from the duality relations for $X$. Let $f$ be an ambiently $n$-dualizable (resp. $a^{(n-k)}$-adjunctible) $k$-morphism in $\Cc$ that belongs to $\Hom_{\Cc}(X,Y)$. Since $X$ is $n$-dualizable the morphisms $\coev_X$ and $\ev_X$ belong to $\Cc^{nd}$ (i.e. are ambiently $n$-dualizable 1-morphims), and are therefore $a^{(n-k)}$-adjunctible as well. Since compositions of ambiently $n$-dualizable (resp. $a^{(n-k)}$-adjunctible) morphisms are again ambiently $n$-dualizable (resp. $a^{(n-k)}$-adjunctible), the maps $\Phi_1$ and $\Phi_1^{-1}$
preserve these properties.
\end{proof}

Let $a,b:X \rightarrow Y$ be ambiently $n$-dualizable 1-morphims in $\Cc$. Let $a' := \Phi_1(a)$ and $b' := \Phi_1(b)$. It follows from Lemma \ref{lem:gen-fold-1} that $a'$ and $b'$ are ambiently $n$-dualizable. Let $(a'\dashv (a')^R, u_{a'},v_{a'})$ be duality data of an adjunction. Define a map
\begin{align*}
    \Phi_2 : \Hom_{\Cc}(a',b') &\to \Hom_{\Cc}(\Id_{\unit},  (a')^R \circ b' ) \\
    f':a' \to b' &\mapsto   (\Id_{(a')^R} \circ_1 f' ) \circ_2 u_{a'},
\end{align*}
where $\circ_1$ and $\circ_2$ denote the composition of 1-morphisms and 2-morphisms, respectively. A key observation is that $(a')^R \circ b'$ is a 1-morphism from $\unit$ to $\unit$ in $\Cc$. Consequently, for a $k$-morphism $f'$ in $\Cc$ that belongs to $\Hom_{\Cc}(a',b')$, the $k$-morphism $\Phi_2(f')$ in $\Cc$ may be regarded as a $(k-1)$-morphism in the loop category $\Omega\Cc$.

\begin{lemma}[Folding Lemma II]\label{lem:gen-fold-2}
The functor $\Phi_2$ is an equivalence. Let $f'$ be a $k$-morphism in $\Cc$ that belongs to $\Hom_{\Cc}(a',b')$. Then the following are equivalent:
\begin{enumerate}[label = (\arabic*)]
    \item $f'$ is ambiently $n$-dualizable (resp. $a^{(n-k)}$-adjunctible) in $\Cc$.
    \item $\Phi_2(f')$ is ambiently $n$-dualizable (resp. $a^{(n-k)}$-adjunctible) in $\Cc$.
    \item $\Phi_2(f')$ is ambiently $(n-1)$-dualizable (resp. $a^{(n-k)}$-adjunctible) in $\Omega\Cc$.
\end{enumerate}
\end{lemma}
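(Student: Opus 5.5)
I will follow the pattern of Folding Lemma~I. First I show that $\Phi_2$ is an equivalence by writing down an explicit inverse using the counit of $a'\dashv (a')^R$:
\begin{align*}
    \Phi_2^{-1}:\Hom_{\Cc}(\Id_{\unit},(a')^R\circ b') &\to \Hom_{\Cc}(a',b'),\\
    g &\mapsto (v_{a'}\circ_1 \Id_{b'})\circ_2(\Id_{a'}\circ_1 g).
\end{align*}
This is the usual mate correspondence $\Hom(a',b')\simeq\Hom(\Id_{\unit},(a')^R b')$. The zig-zag 3-morphisms $z_1,z_2$ for $a'\dashv(a')^R$ supply the natural equivalences $\Phi_2^{-1}\Phi_2\simeq \Id$ and $\Phi_2\Phi_2^{-1}\simeq\Id$, so both composites are equivalent to the identity as functors of $(\infty,N-2)$-categories.

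For the equivalence $(1)\Leftrightarrow(2)$, the argument is the same as in Folding Lemma~I. By Lemma~\ref{lem:gen-fold-1}, $a'$ and $b'$ are ambiently $n$-dualizable. Hence $(a')^R$, $u_{a'}$ and $v_{a'}$ all lie in $\Cc^{nd}$; for a 1-morphism this is the tower of adjunctions, with units and counits in $\Cc^{nd}$ by Lemma~\ref{lem:dual-towers}. So $\Phi_2$ and $\Phi_2^{-1}$ are built from $f'$ (resp.\ $g$) by whiskering and composing with morphisms of $\Cc^{nd}$. Two closure facts then suffice:
\begin{itemize}
    \item $\Cc^{nd}$ is a subcategory, so it is closed under composition and whiskering.
    \item $a^{(n-k)}$-adjunctibility of a $k$-morphism is preserved by whiskering with $1$-morphisms and by composing with ambiently $n$-dualizable $k$-morphisms.
\end{itemize}
For the second fact, adjunctions and their units and counits are carried along by whiskering (a functor of hom-categories) and by composition, and the data we compose with is itself $a^{(n-k)}$-adjunctible. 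One subtlety: being in $\Cc^{nd}$ also requires the source and target of $f'$ to lie in $\Cc^{nd}$. This is automatic inductively, since the iterated sources and targets are $a',b'$ (resp.\ $\Id_\unit$ and $(a')^R b'$) and these are in $\Cc^{nd}$. Applying the closure facts to both $\Phi_2$ and $\Phi_2^{-1}$, and using $\Phi_2^{-1}\Phi_2(f')\simeq f'$ together with the invariance of these properties under equivalence, gives $(1)\Leftrightarrow(2)$.

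For $(2)\Leftrightarrow(3)$, observe that $\Omega\Cc=\Hom_\Cc(\unit,\unit)$ is a full sub-hom. A $j$-morphism of $\Omega\Cc$ is a $(j+1)$-morphism of $\Cc$, and adjunctions between $j$-morphisms in $\Omega\Cc$ for $j\ge1$ are exactly adjunctions between the corresponding $(j+1)$-morphisms in $\Cc$. The dexterity condition on $\Phi_2(f')$ is therefore literally the same in both settings, so the $a^{(n-k)}$ part is immediate. For ambient dualizability I will use Lemma~\ref{lem:dual-towers} on both sides:
\begin{itemize}
    \item In $\Cc$, the $k$-morphism $\Phi_2(f')$ must be $(n-k)$-times t-adjunctible with source and target in $\Cc^{nd}$.
    \item In $\Omega\Cc$, the $(k-1)$-morphism must be $((n-1)-(k-1))=(n-k)$-times t-adjunctible with source and target in $(\Omega\Cc)^{(n-1)d}$.
\end{itemize}
The adjunctibility conditions agree, so by induction on $k$ it remains to compare objects and $1$-morphisms. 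I need that a $1$-morphism $\unit\to\unit$ in $\Cc$ lies in $\Cc^{nd}$ if and only if, as an object of $\Omega\Cc$, it is $(n-1)$-dualizable. An endomorphism $h$ of $\unit$ is $(n-1)$-times t-adjunctible in $\Cc$ if and only if it is $(n-1)$-dualizable in the symmetric monoidal $\Omega\Cc$. One direction takes the dual of $h$ in $\Omega\Cc$, whose tensor product is composition, and compares it with an adjoint in $\Cc$; the other direction uses Theorem~\ref{thm:Araujo} with ev/coev of $h$, which in $\Omega\Cc$ are $2$-morphisms of $\Cc$ of the form $h^\vee h\Rightarrow\Id$.

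I expect this last identification to be the main obstacle, since duals in $\Omega\Cc$ and adjoints in $\Cc$ are only related up to an Eckmann--Hilton argument. I would first try the following route. Dualizability of $h$ in $\Omega\Cc$ gives an inverse $h^{-1}$ up to the ev/coev 2-morphisms, which need not be invertible. Instead I would argue as in Lemma~\ref{lem:adj-2-fully-adj}: an adjunction $h\dashv h^R$ in $\Cc$ with unit and counit $\Id_\unit\Rightarrow h^Rh$ and $hh^R\Rightarrow\Id_\unit$ is, using commutativity of composition in $\Omega\Cc$, exactly duality data in $\Omega\Cc$. Towers of adjunctions then match iterated adjunctibility of ev/coev, which by Theorem~\ref{thm:Araujo} is $(n-1)$-dualizability.
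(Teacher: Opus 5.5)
Your proposal is correct and follows the paper's proof. It uses the same inverse $\Phi_2^{-1}$ built from the counit $v_{a'}$ and the zig-zag relations, and the same observation that $(a')^R$, $u_{a'}$, $v_{a'}$ lie in $\Cc^{nd}$, so that $\Phi_2$ and $\Phi_2^{-1}$ preserve both properties. For $(2)\Leftrightarrow(3)$ the paper simply appeals to the definition of $\Omega\Cc$. Your additional argument is a sound filling-in of that step: you note that only the object level is nontrivial, and there you match $\Cc^{nd}$ with $(\Omega\Cc)^{(n-1)d}$ using an Eckmann--Hilton identification of duals in $\Omega\Cc$ with adjoints of endomorphisms of $\unit$ in $\Cc$, together with Ara\'ujo's criterion.
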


\begin{proof}
An inverse to $\Phi_2$ is given by
\begin{align*}
    \Phi_2^{-1}:\Hom_{\Cc}(\Id_{\unit},  (a')^R \circ b' )   &\to \Hom_{\Cc}(a',b'),\\
                   g' &\mapsto  (v_{a'}  \circ_1 \Id_{b'}) \circ_2 (\Id_{a'} \circ_1 g'),
\end{align*}
The fact that this is an inverse follows from the zig-zag relations for the adjunction $a'\dashv (a')^R$. Since $a'$ belongs to $\Cc^{nd}$, so do $\Id_{a'^R}$, $u_{a'}$, and $v_{a'}$. Hence $\Phi_2$ and $\Phi_2^{-1}$ preserve ambient $n$-dualizability (resp. $a^{(n-k)}$-adjunctibility). This establishes the equivalence $(1) \Leftrightarrow (2)$.

The equivalence $(2) \Leftrightarrow (3)$ follows from the definition of $\Omega \Cc$. A $(k-1)$-morphism in $\Omega \Cc$ is ambiently $n$-dualizable (resp. $a^{(n-k)}$-adjunctible) as a $k$-morphism in $\Cc$ if and only if it is ambiently $(n-1)$-dualizable (resp. $a^{(n-k)}$-adjunctible) as a $(k-1)$-morphism in $\Omega \Cc$.
\end{proof}

We now prove Theorem \ref{thm:main-dual-result}.

\begin{proof}[Proof of Theorem \ref{thm:main-dual-result}]
Let $f$ be a $k$-morphism in $\Cc$ such that the source $S(f)$ and target $T(f)$ are ambiently $n$-dualizable. Let $a^{n-k}$ be a dexterity function of length $n-k$. If $f$ is ambiently $n$-dualizable, then $f$ is $a^{(n-k)}$-adjunctible. We must establish the converse. We proceed by induction.

The case that $k=1$ has already been established for all $n$ by Lemma \ref{lem:higher-left-right-lem} and Lemma \ref{lem:duality-induct-up}. Assume that $k>1$ and that the statement holds for $k-1$. Let $f$ be an $a^{n-k}$-adjunctible $k$-morphism in $\Cc$ whose source and target are ambiently $n$-dualizable. We need to show that $f$ is ambiently $n$-dualizable.

We first consider the case that $f$
already belongs to the loop category $\Omega \Cc$. In this case, $f$ is an $a^{(n-k)}$-adjunctible $(k-1)$-morphism in $\Omega \Cc$. By the inductive hypothesis $f$ is ambiently $(n-1)$-dualizable in $\Omega \Cc$ and hence ambiently $n$-dualizable in $\Cc$. 

Suppose that $f \notin \Omega \Cc$. Let $X, Y$ be the source and target objects of $f$, respectively, and $a,b$ be the source and target 1-morphisms, respectively. Since the source and target of $f$ belong to $\Cc^{nd}$, their source and target data do as well. In particular, $X,Y$ and $a,b$ belong to $\Cc^{nd}$. The Folding Lemmas imply that $\Phi_2(\Phi_1(f))$ is $a^{n-k}$-adjunctible as a $(k-1)$-morphism in $\Omega \Cc$. By the inductive hypothesis $\Phi_2(\Phi_1(f))$ is ambiently $(n-1)$-dualizable as a $(k-1)$-morphism in $\Omega \Cc$. Applying the Folding Lemmas again, we have that $f$ is ambiently $n$-dualizable in $\Cc$.
\end{proof}

\section{The defect cobordism hypothesis}\label{sec:defect-cob-hyp}

In this section, we provide a reformulation of the cobordism hypothesis for defects \cite[Section 4.3]{L09} in terms of symmetric monoidal (op)lax natural transformations. The results of \cite{JFS15} show that every defect in the sense of \cite{L09} gives rise to a system of (op)lax natural transformations. For the prototypical defects considered here, we make this construction precise and formulate Conjecture~\ref{conj:main-conj}, which asserts that this assignment is an equivalence --- that is, that the system of (op)lax natural transformations determines the defect. For the general singularities defined in \cite[Section 4.3]{L09}, we formulate the analogous Conjecture~\ref{conj:general-singular-reformulation}. Our main technical result, Theorem \ref{thm:main-dual-result}, is the key ingredient in showing that, assuming the ordinary cobordism hypothesis, this conjecture is equivalent to the cobordism hypothesis for defects.

This section is organized as follows. In Section~\ref{sec:def-structure} we introduce the prototypical defect structures we primarily consider, including tangential structures and labeling systems. In Section~\ref{sec:reformulation} we state the reformulation of the cobordism hypothesis for prototypical defects in terms of systems of (op)lax natural transformations and give a precise account of the $O(n-k)$-action on the space of ambiently $n$-dualizable $k$-morphisms. In Section~\ref{sec:comparison} we compare our reformulation with the specialization of the cobordism hypothesis for defects in \cite{L09} to prototypical defects, and show that, assuming the ordinary cobordism hypothesis, they are equivalent statements. In Section~\ref{sec:conical}, we extend this reformulation to the general singularities introduced in \cite{L09}.

\subsection{Prototypical defect structures}\label{sec:def-structure}

We begin by describing the class of defect structures considered throughout most of this section. We focus on these prototypical structures because their globular form makes the relationship between defects and systems of (op)lax natural transformations especially transparent. We give an explicit account of their geometry so that the subsequent discussion does not require familiarity with the general theory of singularity data.

We refer to a manifold equipped with such a structure as a defect manifold. The underlying stratification of the codimension-$k$ defect manifolds considered here is a flag of submanifolds
\begin{equation*}
 M_k \subset M_{k-1} \subset \cdots \subset M_1 \subset M_0 :=M,   
\end{equation*}
in which each inclusion $M_i\subset M_{i-1}$ has codimension one and is equipped with a framing of its normal bundle. Thus, $M_i$ has codimension $i$ in $M$. Locally, this models each stratum as an iterated domain wall. In particular, each codimension-$i$ stratum has a source and target codimension-$(i-1)$ stratum, reflecting the globular structure of an $i$-morphism.

These defect structures are special cases of the singularity data considered in \cite[Section~4.3]{L09}. They may also be viewed as a special class of structured conically smooth stratified spaces in the sense of \cite{AFT17}; compare in particular their examples of labeled hypersurfaces, framed submanifolds, and specified intersections. Closely related, particularly from the perspective of encoding higher-categorical data, are the vari-framed disk-stratified manifolds studied in \cite{AFR18}.

We first introduce domain wall structures (the codimension one case) and then generalize them to flag-stratified manifolds (the case of arbitrary codimension). We equip the strata with compatible tangential structures and labels, and combine these data into the notion of a defect manifold. Finally, we introduce a family of standard cubes associated to a prototypical defect; evaluation on these cubes will be central to the statements of the cobordism hypothesis for defects and its reformulation.

\subsubsection{Domain wall manifolds}\label{sec:dom-wall-mflds}

We begin by considering the prototypical example of a codimension one defect, the case of a domain wall.

\begin{definition}\label{def:dom-wall-manifold}
Let $M$ be a manifold. A \textit{domain wall structure} for $M$ consists of a codimension one submanifold $M_1 \subset M$ along with an isomorphism
\begin{equation}
   \phi_1: TM \bigg|_{M_1} \xrightarrow{\cong} \underline{\RR} \oplus TM_1 \label{eq:dom-wall-iso}
\end{equation}
of vector bundles over $M_1$ (let $\hat{n}_1$ denote the section of $TM|_{M_1}$ corresponding to the constant section determined by $1\in \RR$), and a continuous function
\begin{align}\label{eq:dom-wall-coloring}
    c : M \backslash M_1 \to \{A,B\}
\end{align}
such that $\hat{n}_1$ is outward normal to the region $c^{-1}(A)$ and inward normal to the region $c^{-1}(B)$. A local picture of this condition is provided in Figure \ref{fig:local-domain-wall}. We call $M_1$ the \textit{codimension 1-stratum} of $M$. A \textit{diffeomorphism} of manifolds with domain wall structure is a diffeomorphism that preserves the codimension one stratum and the data of \eqref{eq:dom-wall-iso} and \eqref{eq:dom-wall-coloring}.
\end{definition}

\begin{figure}[ht]
    \centering
    \includegraphics[width=0.2\textwidth]{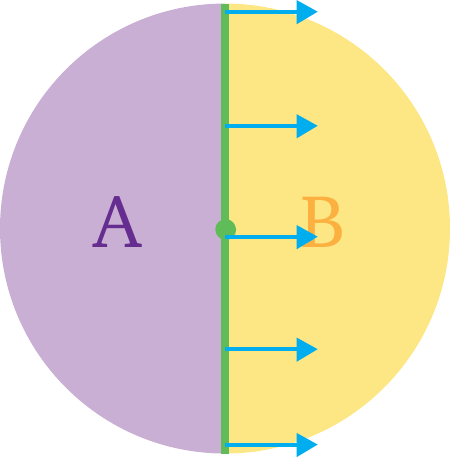}
    \caption{The local picture of domain wall structure at a point on the codimension one stratum. The codimension one stratum is colored green and the section $\hat{n}_1$ is depicted by blue tangent vectors.}
    \label{fig:local-domain-wall}
\end{figure}

\begin{remark}
The isomorphism of \eqref{eq:dom-wall-iso} determines a coorientation of the submanifold $M_1 \subset M$. Similarly, every collar neighborhood of $M_1$ in $M$ determines an isomorphism of the form \eqref{eq:dom-wall-iso}. Because we are working in a topological setting, these choices are equivalent up to contractible ambiguity. We choose to work with isomorphisms of the form \eqref{eq:dom-wall-iso} since this will be convenient for comparing tangential structures on the bulk manifold with tangential structures on the codimension 1-stratum.
\end{remark}

\subsubsection{Flag-stratified manifolds}\label{sec:flag-strat-manifolds}

We will generalize the notion of a manifold with domain wall structure to allow for manifolds carrying defect structures of arbitrary codimension. First, we introduce the stratified manifolds which we later equip with labels and tangential structures. We call such manifolds \emph{flag-stratified} since their stratification arises from a flag of submanifolds.

\begin{definition}
A \emph{flag-stratification of length $k$} on an $n$-dimensional manifold $M$ consists of submanifolds $M_i \subset M$ of codimension $i$ for $1\leq i \leq k$ such that $M_i \subset M_{i-1}$ (where $M_0 = M$), along with the data of isomorphisms
\begin{align}
    \phi_i : TM_{i-1}\bigg|_{M_{i}} \xrightarrow{\cong} \underline{\RR} \oplus TM_{i}, \label{eq:flag-iso}
\end{align}
for each $i\in\{1,..,k\}$. We adopt the convention that $M_{k+1}:=\emptyset$ and call $M_i\setminus M_{i+1}$ the codimension-$i$ stratum. We call $M_{i}$ the codimension-$\geq i$ stratum. A $n$-dimensional manifold equipped with a flag stratification of length $n$ is called an $n$-dimensional \emph{flag-stratified} manifold.
\end{definition}

\begin{remark}
Note that every flag-stratification of length $k < n$ extends to a flag stratification of length $n$ by setting $M_i = \emptyset$ for $i > k$. In this way every manifold equipped with a flag-stratification of length $k$ is a flag-stratified manifold.
\end{remark}

We simultaneously denote by $\hat{n}_i$ the section of the vector bundles
\begin{equation}
    TM_{i-1}\bigg|_{M_i} \subset TM_{i-2}\bigg|_{M_i}  \subset \cdots \subset TM\bigg|_{M_i}
\end{equation}
pulled back from the constant section $1\in \underline{\RR}$ by $\phi_i$. By composing the isomorphisms $\phi_{i}$ through $\phi_{1}$ we get an isomorphism
\begin{align}
   \varphi_i :  TM\bigg|_{M_i} \xrightarrow{\cong} \underline{\RR}_1 \oplus \cdots \oplus \underline{\RR}_i \oplus TM_i 
\end{align}
where $\underline{\RR}_j$ denotes the copy of $\underline{\RR}$ spanned by the image of the section $\hat{n}_j$.

\subsubsection{Defect tangential structures}\label{sec:bd-tang-str}

We now introduce defect tangential structures with which flag-stratified manifolds may be equipped. Recall from \cite{L09}, that an \textit{$n$-dimensional tangential structure} consists of a rank $n$-vector bundle $\zeta \to X$, and is denoted by the pair $(X,\zeta)$.\footnote{An $n$-dimensional tangential structure can also be thought of as a topological space equipped with a continuous map to $BO(n)$. Our definitions in this section also translate to this setting.}

\begin{definition}\label{def:defect-tang-str-n}
An \textit{$n$-dimensional defect tangential structure $\SingDat$ of length $k$} consists of the following data:
\begin{enumerate}
    \item For each $i \in\{0,...,k\}$ an $(n-i)$-dimensional tangential structure $(X_i,\zeta_i)$ such that $X_0$ is non-empty.
    \item For each $i \in \{1,...,k\}$ a pullback diagram
    \begin{equation} \label{eq:tang-str-pb-n}
    \begin{tikzcd}
    \underline{\RR} \oplus \zeta_i \arrow[r,"\tilde{q}"] \arrow[d]  & \zeta_{i-1} \arrow[d] \\
    X_i \arrow[r,"q"] & X_{i-1} 
    \end{tikzcd}
    \end{equation}
\end{enumerate}
We refer to $(X_0,\zeta_0)$ as the \textit{bulk tangential structure} and $(X_i,\zeta_i)$ as the \textit{codimension-$i$ tangential structure} for $i\geq 1$.
\end{definition}

\begin{remark}
As for flag-stratifications, every $n$-dimensional tangential structure of length $k < n$ extends to an $n$-dimensional tangential structure of length $n$, by setting $X_i = \emptyset$ for $i > k$. We say \emph{$n$-dimensional tangential structure} for an $n$-dimensional tangential structure of length $n$.
\end{remark}

This terminology is suggestive: we consider flag-stratified manifolds such that the codimension-$\geq i$ stratum is equipped with an $(X_i,\zeta_i)$-structure. These tangential structures fit together in a way described by the pull-back diagram \eqref{eq:tang-str-pb-n}.

Recall that if $(X,\zeta)$ is an $n$-dimensional tangential structure and $M$ is an $m$-dimensional manifold with $m\leq n$, then an $(X,\zeta)$-structure on $M$ consists of a pullback diagram
\begin{equation*}
\begin{tikzcd}
\widetilde{TM}^{n} \arrow[r,"\tilde{q}"] \arrow[d]
& \zeta \arrow[d] \\
M \arrow[r,"q"]
& X .
\end{tikzcd}
\end{equation*}
Here $\widetilde{TM}^{n} := \underline{\RR}^{\,n-m} \oplus TM$ denotes the inflated tangent bundle of $M$. Here, we inflate on the left so that the canonical cubes we define later are immediately equipped with the appropriate tangential structure. This corresponds to the idea that a codimension-$k$ defect defines a $k$-morphism, and placing the corresponding normal framing in the $k$-th position makes this connection more apparent.

For $M$ an $m$-dimensional flag stratified manifold, the tangential structures on the various strata are related as follows. Given an $(X_{i-1},\zeta_{i-1})$ structure on $M_{i-1}$, the splitting
\[
TM_{i-1}\big|_{M_i} \cong \underline{\RR} \oplus TM_i
\]
induces an $(X_{i-1},\zeta_{i-1})$-structure on $M_{i}$. We require this tangential structure to be compatible with the $(X_{i},\zeta_{i})$-structure on $M_{i}$ via the pullback diagram of \eqref{eq:tang-str-pb-n}.

\begin{definition}\label{def:M-defect-tang-str-n}
Let $\SingDat$ be an $n$-dimensional defect tangential structure. Let $M$ be an $m$-dimensional flag-stratified manifold. The data of an $\SingDat$-structure on $M$ consists of an $(X_i,\zeta_i)$-structure on $M_i$ for each $i\in\{0,...,m\}$ and the data of an isomorphism of $(X_{i-1},\zeta_{i-1})$-structures on $M_i$ making the following diagram commute
\begin{equation}\label{eq:tang-str-data-n}
    \begin{tikzcd}[row sep=small,column sep=small]
	{\underline{\RR} \oplus \widetilde{TM}_i^{n-i} } && {\underline{\RR} \oplus \zeta_{i}} \\
	& {\widetilde{TM}_{i-1}^{n-i+1}} && {\zeta_{i-1}} \\
	{M_i} && {X_i} \\
	& {M_{i-1}} && {X_{i-1}}
	\arrow[hook, from=3-1, to=4-2]
	\arrow[from=3-1, to=3-3]
	\arrow[from=4-2, to=4-4]
	\arrow[from=3-3, to=4-4]
	\arrow[from=1-1, to=3-1]
	\arrow[from=1-1, to=1-3]
	\arrow[from=2-2, to=2-4]
	\arrow[from=2-4, to=4-4]
	\arrow[from=1-3, to=3-3]
	\arrow[from=1-1, to=2-2]
	\arrow[from=2-2, to=4-2]
	\arrow[from=1-3, to=2-4]
\end{tikzcd}
\end{equation}
for $1 \leq i\leq m$.
\end{definition}

\begin{remark}\label{rem:tang-str-cond-n}
The data of \eqref{eq:tang-str-data-n} identifies the tangential structure on the codimension-$i$ stratum with the tangential structure on the codimension-$(i-1)$ stratum by the data of the pullback \eqref{eq:tang-str-pb-n}. This data rarely plays a pivotal role in our discussion and is often trivial. One could also state Definition \ref{def:M-defect-tang-str-n} with this data replaced by an equality condition by noting that any identification can be absorbed into a collar neighborhood of the codimension-$i$ stratum.
\end{remark}

\begin{remark}
The notion of a defect tangential structure is closely related to the treatment of tangential structures on conically smooth stratified spaces in \cite{AFT17}.   A related formulation of defect tangential structures appears in \cite{FMT22}, where compatibility between bulk and defect tangential structures is expressed over the sphere bundle of the normal bundle. The structures considered here are adapted to normally framed flags and record this compatibility successively along the inclusions $M_i\subset M_{i-1}$. For a more general formulation of tangential structures on stratified manifolds, see also \cite{Mul25}.
\end{remark}

\begin{remark}
The length of the defect tangential structure provides an upper limit for the depth of the flag. If $\SingDat$ is an $n$-dimensional defect tangential structure of length $k$, then an $n$-dimensional flag-stratified manifold $M$ with $\SingDat$-structure must have $M_i = \emptyset$ for all $i>k$.
\end{remark}

\begin{example}[Framed defects]\label{ex:framings-n}
    Consider the $n$-dimensional defect tangential structure given by $X_i = \ast$ for each $i$ and $\zeta_i = \RR^{n-i}$, with the maps \eqref{eq:tang-str-pb-n} given by the obvious isomorphism $\RR \oplus \RR^{n-i}  \cong \RR^{n-i+1}$. The data of an $\SingDat$-structure on an $n$-dimensional flag-stratified manifold $M$ consists of an $n$-framing on $M$ such that, when restricted to $M_i$, the $i$-th component of the framing agrees with $\hat{n}_i$.
\end{example}

\begin{example} Recall that given a topological group $G$ equipped with a homomorphism $\rho: G \to O(n)$, one can construct an $n$-dimensional tangential structure by taking the associated bundle $EG \times_{\rho} \RR^{n}$ over $BG$. Extending this example, given topological groups $G_i$ for  $0 \leq i \leq k$, equipped with homomorphisms $\rho_i : G_i \to O(n-i)$, along with commutative diagrams (not necessairly pullback diagrams)
\begin{equation}\label{eq:G-str-comm-diag}
    \begin{tikzcd}
    G_{i} \arrow[r,"\rho_{i}"] \arrow[d,"\varphi_i"]  & O(n-i) \arrow[d,hookrightarrow] \\
    G_{i-1}\arrow[r,"\rho_{i-1}"] & O(n-i+1)
\end{tikzcd}
\end{equation}
one can produce an $n$-dimensional defect tangential structure of length $k$. The codimension-$i$ tangential structure is given by the associated bundle $\zeta_i := EG_i \times{\rho_i} \RR^{n-i}$ over $BG_i$. The map $\varphi_i:G_i\to G_{i-1}$ induces a map $q_i:BG_i \to BG_{i-1}$. The commutative diagram \eqref{eq:G-str-comm-diag} induces an isomorphism of associated vector bundles
\begin{align}
    \zeta_i \times_{\rho_i} \RR^{n-i+1} \cong \zeta_{i} \times_{\rho_{i-1} \circ \varphi_i} \RR^{n-i+1}
\end{align}
over $BG_{i}$. The right hand side is equivalent to $q_i^*\zeta_{i-1}$ while the left hand side is equivalent to $\underline{\RR} \oplus \zeta_i$. This produces the pullback diagram
\begin{equation} 
    \begin{tikzcd}
    \underline{\RR} \oplus \zeta_i \arrow[r,"\tilde{q}"] \arrow[d]  & \zeta_{i-1} \arrow[d] \\
    BG_i \arrow[r,"q_i"] & BG_{i-1} 
    \end{tikzcd}
\end{equation}

We denote the corresponding defect tangential structure by $\overrightarrow{G}$.  A $\overrightarrow{G}$-structure on a flag-stratified manifold consists of a $G_i$-structure on the codimension-$i$ strata, such that these structures are related by the diagrams of \eqref{eq:G-str-comm-diag}.

In the case that $G$ is a topological group equipped with a homomorphism $\phi: G \to O(n)$. We can produce an $n$-dimensional defect tangential structure by iteratively defining $G_i : = G_{i-1} \times_{O(n-i+1)} O(n-i)$, where $G_0 = G$. Moreover, given a stable tangential structure one can produce an $n$-dimensional defect tangential structure for each $n$. For example, orientations and spin structures.
\end{example}

\subsubsection{Defect labels}\label{sec:def-labels}

We now introduce the notion of a labeling system for a flag-stratified manifold. A labeling system is a globular space whose $i$-cells are defect labels of codimension $i$. The globularity conditions ensure that the source and target labels of a codimension-$i$ defect are compatible with the source and target labels of the adjacent codimension-$(i-1)$ defects.

\begin{definition}\label{def:labeling-data}
A \textit{labeling system $\overrightarrow{L}$ of length $k$} consists of topological spaces $L_{i}$ for $0\leq i \leq k$, and for each $i>0$ a pair of continuous maps $s_i,t_i:L_i \to L_{i-1}$, satisfying
\begin{align*}
    s_i \circ t_{i+1} = s_i \circ s_{i+1} \qquad \text{and} \qquad t_i \circ  t_{i+1} = t_i \circ s_{i+1} \, ,
\end{align*}
for $1\leq i< k$.
\end{definition}

We call $L_0$ the space of \textit{bulk-theory labels} and for each $i\geq 1$ we call $L_i$ the space of codimension-$i$ \textit{defect labels}. The functions $s_i,t_i:L_i \to L_{i-1}$ will determine the codimension-$(i-1)$ labels on either side of the codimension-$i$ strata. It is useful to keep in mind the case that each $L_i$ is discrete (equivalently, $\overrightarrow{L}$ is a globular set). If we want to consider this case in isolation, we will call it a \textit{discrete labeling}.

\begin{definition}\label{def:mfld-with-defects}
Let $\overrightarrow{L}$ be a labeling of length $k$ and $M$ a manifold equipped with a flag-stratification of length $k$. An $\overrightarrow{L}$-labeling on a manifold $M$ consists of maps
\begin{align*}
    &c_i : M_{i}\backslash M_{i+1} \to L_i 
\end{align*}
for $0\leq i\leq k$, such that the following condition holds:
\begin{itemize}
    \item[1)] For $1\leq i \leq k$ and $x\in M_i$, the vector $\hat{n}_i$ points from $s_i(c_i(x))$ to $t_i(c_i(x))$ in the following sense: let $\widetilde{n}_i$ be an extension of $\hat{n}_i$ to an open neighborhood of $M_i$ in $M_{i-1}$ and let
\[
\phi_{\tilde{n}_i}^{x}:(-\delta,\epsilon)\to M_{i-1}
\]
be the flow of $\tilde{n}_i$ through $x$. We require that
\begin{align*}
    \lim_{t\to 0^-}
    c_{i-1}\!\left(\phi_{\tilde{n}_i}^{x}(t)\right)
    = s_i(c_i(x)), \qquad \text{and} \qquad 
    \lim_{t\to 0^+}
    c_{i-1}\!\left(\phi_{\tilde{n}_i}^{x}(t)\right)
    = t_i(c_i(x)).
\end{align*}
\end{itemize}

\end{definition}

We refer to the maps $c_i$ as \textit{labeling maps}. In the case that $\overrightarrow{L}$ is discrete, the map $c_i$ assigns a label to each connected component of the codimension-$i$ strata and the label either side of the codimension-$i$ strata corresponds to the source and target label.

\begin{example}\label{ex:dom-wall-labels}
Let $\overrightarrow{L}$ be the labeling of length one given by $L_0 = \{A,B\}$ and $L_1 = \{\ast\}$, equipped with the map $s_1(\ast) = A$ and $t_1(\ast) = B$. Then, a manifold with $\overrightarrow{L}$-labels is a manifold with domain wall structure. %We denote this labeling by $\dom$.
\end{example}

\begin{example}\label{ex:codim-k-defect-labeling}
More generally, let $\overrightarrow{L}$ be the labeling system of length $k$ such that
\begin{align*}
L_i &= \{A_i,B_i\} \qquad \text{for } i<k, \qquad \text{and} \qquad L_k = \{\ast\}.
\end{align*}
The source and target maps are given by
\begin{align*}
s_i(A_i) &= A_{i-1}, &
s_i(B_i) &= A_{i-1}, \\
t_i(A_i) &= B_{i-1}, &
t_i(B_i) &= B_{i-1},
\end{align*}
for $1\leq i<k$, together with $s_k(\ast) = A_{k-1}$ and $t_k(\ast) = B_{k-1}$.

This is the labeling system associated with a prototypical codimension-$k$ defect. The unique element $\ast\in L_k$ represents a codimension-$k$ defect whose source and target are the codimension-$(k-1)$ defects $A_{k-1}$ and $B_{k-1}$, respectively.
\end{example}

\begin{remark}
In the discrete case, our labeling systems may be compared with the defect data of \cite{CRS19}, which assign labels to strata together with incidence data encoded by decorated links. For the normally framed flags considered here, this incidence information is encoded by the globular source and target maps.
\end{remark}

\subsubsection{Defect manifolds}

We now bring together the previous sections to provide a definition of a defect manifold.

\begin{definition}\label{def:defect-datum}
An \textit{$n$-dimensional defect datum $\defect$ of length $k$} consists of a pair $(\SingDat,\Lab)$, where $\SingDat$ is an $n$-dimensional defect tangential structure of length $k$ and $\Lab$ is a labeling of length $k$. An \textit{$n$-dimensional defect datum $\defect$} is an $n$-dimensional defect datum of length $n$.
\end{definition}

\begin{definition}\label{defn:manifold-with-defects}
Let $\defect = (\SingDat,\Lab)$ be an $n$-dimensional defect datum of length $k$ and let $M$ be a flag-stratified manifold of dimension $m \leq n$. A \textit{$\defect$-structure on $M$} consists of an $\SingDat$-structure on $M$ and an $\Lab$-labeling on $M$. A \emph{$\defect$-manifold} is a flag-stratified manifold equipped with a $\defect$-structure. We refer to $\defect$-manifolds collectively as defect manifolds.
\end{definition}

\begin{remark}
In our setup, the labeling data and tangential structure data are independent. In particular, no compatibility conditions are imposed between the labeling system $\Lab$ and the defect tangential structure $\SingDat$.
\end{remark}

\subsubsection{Standard cubes}\label{sec:standard-cubes}

We define the standard cubes, which will be central to the statement of the cobordism hypothesis for defects. Let $\Def=(\SingDat,\Lab)$ be an $n$-dimensional defect datum of length $k$. The standard cube $\mathbb{I}^k$ gives rise to a family of distinguished $k$-morphisms in the bordism $(\infty,n)$-category $\Bord_n(\defect)$ of $\defect$-manifolds.

For the purposes of this section, it suffices to work with the usual description of bordisms in terms of manifolds with corners. In particular, we regard the cube $[-1,1]^k$ as a bordism from $[-1,1]^{k-1}\times\{-1\}$ to $[-1,1]^{k-1}\times\{1\}$ and equip it with a canonical $\Def$-structure. For more details on the relationship between manifolds with corners and bordisms, we refer the reader to \cite{SP14,CS19,FT21}.

\begin{definition}
Let $\kcube$ be the $k$-dimensional flag-stratified manifold $[-1,1]^k$ with flag-stratification given by
\begin{align*}
     \{0\}^{k} \subset  \{0\}^{k-1} \times [-1,1] \subset \cdots \subset \{0\} \times [-1,1]^{k-1} \subset [-1,1]^{k},
\end{align*}
and whose isomorphisms \eqref{eq:flag-iso} are determined by $\hat{n}_i = \partial_{i}$.
\end{definition}

We first construct a family of $\SingDat$-structures on $\kcube$. Let $\widetilde{X}_k$ be the frame bundle of $\zeta_k \to X_k$. An element $\tilde{x} \in \widetilde{X}_k$ is equivalent to an $(X_k,\zeta_k)$-structure on the point, in other words, a pullback diagram
\begin{equation}\label{eq:X-structure-on-pt}
    \begin{tikzcd}[row sep=small,column sep=small]
	{\RR^{n-k}} && {\zeta_k} \\
	\\
	\pt && {X_{k}}
	\arrow[from=1-1, to=1-3]
	\arrow[from=1-1, to=3-1]
	\arrow[from=1-3, to=3-3]
	\arrow[from=3-1, to=3-3]
\end{tikzcd}
\end{equation}
For each $\widetilde{x}\in \widetilde{X}_k$ we equip $\kcube$ with a $\SingDat$-structure as follows. The $k$-stratum of $\kcube$ is a point, which we equip with the $(X_k,\zeta_k)$-structure determined by $\widetilde{x}\in \widetilde{X}_k$. The diagrams of \eqref{eq:tang-str-pb-n} induce maps $\tilde{q}_i : \widetilde{X}_i \to \widetilde{X}_{i-1}$. Equip the $i$-stratum $\{0\}^{i} \times [-1,1]^{k-i}$ with the $(X_i,\zeta_i)$-structure determined by the $(\tilde{q}_{i+1}\circ \cdots \circ \tilde{q}_k)(\tilde{x}) \in \widetilde{X}_i$. More concretely, the $(X_i,\zeta_i)$-structure on the $i$-stratum is given by the composition of pullbacks
\begin{equation*}
    \begin{tikzcd}[row sep=small,column sep=small]
	{T([-1,1]^{k-i}) \oplus \underline{\RR}^{n-k} } && {\RR^{k-i} \oplus \RR^{n-k}} && { \underline{\RR}^{k-i} \oplus \zeta_k } && {\zeta_{i}} \\
	\\
	{[-1,1]^{k-i}} && \pt && {X_k} && {X_{i}}
	\arrow[from=1-1, to=1-3]
	\arrow[from=1-1, to=3-1]
	\arrow[from=1-3, to=1-5]
	\arrow[from=1-3, to=3-3]
	\arrow[from=1-5, to=1-7]
	\arrow[from=1-5, to=3-5]
	\arrow[from=1-7, to=3-7]
	\arrow[two heads, from=3-1, to=3-3]
	\arrow[from=3-3, to=3-5]
	\arrow[from=3-5, to=3-7]
\end{tikzcd}
\end{equation*}
where the pullback on the right is obtained using diagrams \eqref{eq:tang-str-pb-n} in the definition of $\SingDat$. The diagrams \eqref{eq:tang-str-data-n} in the definition of an $\SingDat$-structure commute by construction. Hence, for each $\widetilde{x} \in \widetilde{X}_k$, the flag-stratified manifold $\kcube$ admits a canonical $\SingDat$-structure.

Let $\lambda \in L_k$, define an $\Lab$-labeling on $\kcube$ by setting $c_k(\{0\}^k)= \lambda$ and
\begin{align*}
    c_i(x) = \begin{cases}
        (s_{i+1}\circ \cdots \circ s_{k})(\lambda) & x_{i+1} < 0\\
        (t_{i+1} \circ \cdots \circ t_k)(\lambda) & x_{i+1} > 0
    \end{cases}
\end{align*}   
for $i<k$.

\begin{definition}
Let $\Def=(\SingDat,\Lab)$ be a defect datum of length $k$. The \emph{standard $k$-cube} $\kcube(\tilde x,\lambda)$ associated to $(\tilde x,\lambda) \in \widetilde{X}_k \times L_k$ is the flag stratified manifold $\kcube$ equipped with the $\SingDat$-structure determined by $\tilde{x}\in \widetilde X_k$ and the labeling determined by $\lambda \in L_k$.
\end{definition}

For $0\leq i\leq k$, applying this construction to the truncation of $\Def$ to length $i$ defines a standard $i$-cube $\Icube{i}(\widetilde{x},\lambda)$ for $(\widetilde{x},\lambda)\in\widetilde X_i\times L_i$. Moreover, the source and target of $\kcube(\tilde x,\lambda)$ are $\Icube{k-1}(\widetilde{q}_k(\widetilde x),s_k(\lambda))$ and $\Icube{k-1}(\widetilde{q}_k(\widetilde x),t_k(\lambda))$, respectively.

\subsection{Reformulation via oplax natural transformations}\label{sec:reformulation}

We now formulate a description of field theories with prototypical defects in terms of systems of symmetric monoidal (op)lax natural transformations. Such a system is encoded by symmetric monoidal functors valued in the (op)lax arrow categories of the target. More precisely, each codimension-$i$ defect label determines a field theory valued in the (op)lax arrow category of $i$-morphisms, while the compatibility between defects of successive codimensions is expressed through the corresponding source and target functors. Throughout, we work primarily with oplax arrow categories and oplax natural transformations. Similar statements and results hold in the lax setting, which we will indicate in subsequent remarks. 

We begin by recalling the oplax arrow categories introduced in \cite{JFS15}. We then give a precise account of the $O(n-k)$-action on the space of ambiently $n$-dualizable $k$-morphisms. Finally, we formulate Conjecture~\ref{conj:main-conj}, which asserts that the resulting system of symmetric monoidal oplax natural transformations determines the original defect theory.

\subsubsection{Oplax arrow categories}

Let $\Cc$ be a symmetric monoidal $(\infty,n)$-category. For $1\leq k\leq n$, let $\Cc^{\to}_{(k)}$ denote the \emph{oplax arrow category} of $k$-morphisms  as defined in \cite{JFS15}. The objects of $\Cc^{\to}_{(k)}$ are the $k$-morphisms of $\Cc$. We will not need to utilize the technical definition of $\Cc^{\to}_{(k)}$; however, it is useful to keep in mind the following informal description of $\Cc^{\to}_{(1)}$:
\begin{itemize}
    \item The objects of $\Cc_{(1)}^{\to}$ are the 1-morphisms of $\Cc$ denoted by a horizontal arrow
    \begin{equation*}
        a \xrightarrow{f} a'
    \end{equation*}
    \item Given a pair of objects $f:a\to a'$ and $g:b\to b'$ in $\Cc_{(1)}^{\to}$, a 1-morphism from $f$ to $g$ in $\Cc_{(1)}^{\to}$ consists of a pair of 1-morphisms $h:a\to b$ and $h':a' \to b'$ and a 2-morphism $u:g\circ h \to h'\circ f$ in $\Cc$. This is depicted by the following diagram
    \begin{equation*}
          \begin{tikzcd}[row sep=small,column sep=small]
	   a && {a'} \\
	   \\
	   b && {b'}
	   \arrow["f", from=1-1, to=1-3]
	   \arrow["h"',from=1-1, to=3-1]
	   \arrow["h'",from=1-3, to=3-3]
	   \arrow["u",between={0.2}{0.8}, Rightarrow, from=3-1, to=1-3]
	   \arrow["g"', from=3-1, to=3-3]
    \end{tikzcd}
    \end{equation*}
    Composition corresponds to composing diagrams vertically.

    \item For $i\leq n$, an $i$-morphism in $\Cc_{(1)}^{\to}$ consists of a pair of $i$-morphisms in $\Cc$ together with a not necessarily invertible $(i+1)$-morphism filling out a higher coherence cell.
    
    For example, given a pair of 1-morphisms $(h,h',u)$ and $(j,j',v)$ from $f$ to $g$ in $\Cc_{(1)}^{\to}$, a 2-morphism from $(h,h',u)$ to $(j,j',v)$ consists of a pair of 2-morphisms $\nu : h \to j$ and $\nu':h'\to j'$ in $\Cc$, along with a 3-morphism $\gamma$ in $\Cc$ of the form
    \begin{equation*}
        \begin{tikzcd}
	a && {a'} &&& a && {a'} \\
	\\
	b && {b'} &&& b && {b'}
	\arrow["f", from=1-1, to=1-3]
	\arrow[""{name=0, anchor=center, inner sep=0}, "j", curve={height=-18pt}, from=1-1, to=3-1]
	\arrow[""{name=1, anchor=center, inner sep=0}, "h"', curve={height=18pt}, from=1-1, to=3-1]
	\arrow[""{name=2, anchor=center, inner sep=0}, "{j'}", curve={height=-18pt}, from=1-3, to=3-3]
	\arrow["f", from=1-6, to=1-8]
	\arrow[""{name=3, anchor=center, inner sep=0}, "h"', curve={height=18pt}, from=1-6, to=3-6]
	\arrow[""{name=4, anchor=center, inner sep=0}, "{j'}", curve={height=-18pt}, from=1-8, to=3-8]
	\arrow[""{name=5, anchor=center, inner sep=0}, "{h'}"', curve={height=18pt}, from=1-8, to=3-8]
	\arrow["{v}"', curve={height=12pt}, between={0.3}{0.8}, Rightarrow, from=3-1, to=1-3]
	\arrow["g"', from=3-1, to=3-3]
	\arrow["u", curve={height=-12pt}, between={0.2}{0.8}, Rightarrow, from=3-6, to=1-8]
	\arrow["g"', from=3-6, to=3-8]
	\arrow["\nu", between={0.2}{0.8}, Rightarrow, from=1, to=0]
	\arrow["\gamma", between={0.3}{0.7}, Rightarrow, scaling nfold=3, from=2, to=3]
	\arrow["{\nu'}", between={0.2}{0.8}, Rightarrow, from=5, to=4]
    \end{tikzcd}
    \end{equation*}
    
    \item The $i$-morphisms in $\Cc_{(1)}^{\to}$ for $i > n $  are equivalences in $\Cc$ between diagrams describing $(i-1)$-morphisms in $\Cc_{(1)}^{\to}$.
\end{itemize}

The oplax arrow categories are equipped with symmetric monoidal functors
\begin{align*}
    S_k,T_k:\Cc^{\to}_{(k)} \to \Cc_{(k-1)}^{\to},
\end{align*}
(where $\Cc^{\to}_{(0)} := \Cc$) called the \emph{source} and \emph{target} functors, satisfying $S_{k-1}\circ S_k = S_{k-1} \circ T_k$ and $T_{k-1}\circ S_k = T_{k-1} \circ T_k$.

In the case that $\Cc$ is symmetric monoidal, the oplax arrow categories $\Cc^{\to}_{(k)}$ inherit a symmetric monoidal structure, as do the source and target functors to $\Cc^{\to}_{(k-1)}$. 

Every $i$-morphism $g$ in $\Cc^{\to}_{(k)}$ has an underlying $(k+i)$-morphism in $\Cc$ (the one filling out the diagram), which is denoted $g^{\sharp}$. Note that the assignment $g \mapsto g^{\sharp}$ is not functorial. In particular, it does not preserve the source and target.

The following criterion is the key property of $\Cc^{\to}_{(k)}$ that we will use.

\begin{theorem}[{Johnson-Freyd--Scheimbauer \cite[Theorem~7.6]{JFS15}}]\label{thm:JFS-oplax-dual}
An object $f$ of $\Cc^{\to}_{(k)}$ is $(n-k)$-dualizable in $\Cc^{\to}_{(k)}$ if and only if $S_k(f)$ and $T_k(f)$ are $(n-k)$-dualizable in $\Cc^{\to}_{(k-1)}$, and $f^{\sharp}$ is $(n-k)$-times right-adjunctible.
\end{theorem}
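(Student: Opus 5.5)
The plan is to argue by induction on $n-k$, and to reduce everything to two basic computations in the oplax arrow category. The first computation is the characterization of duals, already used in the proof of Lemma~\ref{lem:check-preserves-adjoints}. The second is an analogous characterization of adjoints of morphisms in $\Cc^{\to}_{(k)}$. Throughout, I would first treat $k=1$, and then pass to general $k$ by identifying $\Cc^{\to}_{(k)}$ with iterated arrow categories of hom-categories. Concretely, $\Cc^{\to}_{(k)}$ should be obtained by applying the $k=1$ construction inside the relevant $\Hom$-$(\infty,n-k+1)$-categories, up to the source and target constraints.

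\emph{Step 1 (duals).} For an object $f\colon a\to a'$ of $\Cc^{\to}_{(1)}$, I would use \cite[Proposition~7.10]{JFS15}: $f$ is dualizable in $\Cc^{\to}_{(1)}$ if and only if $a,a'$ are dualizable and $f$ admits a right adjoint. In that case the dual is $(f^R)^\vee$. The evaluation and coevaluation are the squares displayed in the proof of Lemma~\ref{lem:check-preserves-adjoints}. Their vertical components are $\ev_a,\ev_{a'}$ and $\coev_a,\coev_{a'}$, and their filling $2$-cells are, up to isomorphisms, the counit $v$ and unit $u$ of $f\dashv f^R$. This settles the case $n-k=1$.

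\emph{Step 2 (adjoints of morphisms).} Next I would prove a lemma describing when a $1$-morphism $(h,h',u)\colon f\to g$ of $\Cc^{\to}_{(1)}$ admits a right adjoint. The claim is that this happens exactly when $h$ and $h'$ admit right adjoints in $\Cc$ and the filler $u$, after taking mates, admits a right adjoint in the appropriate hom-category. In that case the unit and counit are again squares. Their components are the units and counits for $h$ and $h'$, and their filler is built from $u$ together with the unit and counit of $u\dashv u^R$. Iterating this lemma one level down, in $\Hom$-categories, shows that the units and counits of such an adjunction are themselves of the same shape, with fillers given by the iterated units and counits of $u$.

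\emph{Step 3 (induction).} By Araújo's Theorem~\ref{thm:Araujo}, applied in the form with dexterity function $R^{n-k-1}$ (equivalently \cite{JFS15}), $f$ is $(n-k)$-dualizable in $\Cc^{\to}_{(k)}$ if and only if it is dualizable and $\ev_f,\coev_f$ are $(n-k-1)$-times right-adjunctible. By Step~1 the components of $\ev_f,\coev_f$ are $\ev,\coev$ of the source and target, and their fillers are the counit and unit of $f^\sharp\dashv (f^\sharp)^R$. Step~2, iterated, then says that $\ev_f,\coev_f$ are $(n-k-1)$-times right-adjunctible precisely when two conditions hold. First, the evaluation and coevaluation of $S_k(f)$ and $T_k(f)$ are $(n-k-1)$-times right-adjunctible. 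Second, the unit and counit of $f^\sharp$ are $(n-k-1)$-times right-adjunctible, which is exactly the statement that $f^\sharp$ is $(n-k)$-times right-adjunctible. Applying Araújo again, the first condition is the $(n-k)$-dualizability of $S_k(f)$ and $T_k(f)$, and the second is the condition on $f^\sharp$ in the theorem.

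The main obstacle is Step~2 in the genuinely $(\infty,n)$-categorical setting. One has to construct the adjoint square coherently and check the zig-zag $3$-cells, and the only concrete handle on this is the informal cell description of $\Cc^{\to}_{(1)}$. Here I would try first to work in the homotopy bicategory and its higher analogues, using Remark~\ref{rem:coherent}: adjunctibility is detected there, so it suffices to produce unit and counit squares and invertible zig-zag cells at the level of homotopy bicategories. A secondary subtlety concerns the direction of the adjoints. In the oplax convention the fillers point so that only \emph{right} adjoints of $f^\sharp$ arise. This must be tracked carefully, since the lax variant would instead produce left adjoints.
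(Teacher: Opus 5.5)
Your outline is correct and is essentially the argument of \cite[Section~7]{JFS15}, which the paper cites here without reproving. The paper's proof of Theorem~\ref{thm:O(n-k)-action} quotes exactly the componentwise criteria of your Steps~1--2, with components in $\Cc^{\to}_{(k-1)}$ and filler $f^\sharp$ in $\Cc$. Proving those criteria directly at level $k$ is also the cleanest way to handle $k\geq 2$, rather than identifying $\Cc^{\to}_{(k)}$ with arrow categories of hom-categories.

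One precision: in Step~2 the right-adjoint criterion is on the filler $u$ itself, not on a mate. Mates enter only in the filler of the adjoint square and in the left-adjoint criterion. This is what Step~3 needs, since taking mates does not preserve adjunctibility.
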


\begin{remark}
The symmetric monoidal lax arrow category of $i$-morphisms in $\Cc$ is denoted by $\Cc^{\downarrow}_{(i)}$ and is similarly equipped with systems of source and target functors. An object $f$ of $\Cc^{\downarrow}_{(k)}$ is $(n-k)$-dualizable in $\Cc^{\downarrow}_{(k)}$ if and only if $S_k(f)$ and $T_k(f)$ are $(n-k)$-dualizable in $\Cc^{\downarrow}_{(k-1)}$, and $f^{\sharp}$ is $(n-k)$-times left-adjunctible.
\end{remark}

\subsubsection{The $O(n-k)$-action}

We record here an action of $O(n-k)$ on the space of ambiently $n$-dualizable $k$-morphisms in a symmetric monoidal $(\infty,n)$-category $\Cc$. Such an action is asserted by Lurie in the paragraph preceding \cite[Theorem 4.3.11]{L09}, and, as noted in \cite{Dagger25}, follows from the results of \cite[\S 7]{JFS15}. We provide here a proof for general $k$.

\begin{definition}\label{def:n-def-dual-cat}
Let $\Cc$ be a symmetric monoidal $(\infty,n)$-category. The
\emph{$(\infty,n-k)$-category of ambiently $n$-dualizable $k$-morphisms} is
\begin{equation*}
    \DefCat_k^n(\Cc) := \iota_{n-k} ((\Cc^{nd})^{\to}_{(k)}),
\end{equation*}
obtained by truncating $(\Cc^{nd})^{\to}_{(k)}$ to an
$(\infty,n-k)$-category.
\end{definition}

The underlying $\infty$-groupoid of $\DefCat_k^n(\Cc)$ is the space of ambiently $n$-dualizable $k$-morphisms in $\Cc$. The following theorem establishes the required duals.

\begin{theorem}\label{thm:O(n-k)-action}
Let $\Cc$ be a symmetric monoidal $(\infty,n)$-category with $n$-duals and let $0 \leq k \leq n$. Then $\Cc^{\to}_{(k)}$ has $(n-k)$-duals.
\end{theorem}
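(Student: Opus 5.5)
We argue by induction on $k$, using Theorem~\ref{thm:JFS-oplax-dual} as the engine and Theorem~\ref{thm:main-dual-result} to convert one-sided adjunctibility into full ambient dualizability. The case $k=0$ is the hypothesis that $\Cc=\Cc^{\to}_{(0)}$ has $n$-duals. For the inductive step, assume that $\Cc^{\to}_{(k-1)}$ has $(n-k+1)$-duals, and hence in particular $(n-k)$-duals.

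We first treat objects of $\Cc^{\to}_{(k)}$. Let $f$ be such an object. Then $S_k(f)$ and $T_k(f)$ are objects of $\Cc^{\to}_{(k-1)}$, so by the inductive hypothesis they are $(n-k)$-dualizable there. The underlying $k$-morphism $f^\sharp$ lies in $\Cc$, which has $n$-duals, so $\Cc^{nd}=\Cc$. It follows that $f^\sharp$ is ambiently $n$-dualizable, and its source and target are as well. By implication \eqref{eq:implication-1} together with Lemma~\ref{lem:dual-towers}, $f^\sharp$ is $(n-k)$-times right-adjunctible. Note that this step needs only the trivial direction; Theorem~\ref{thm:main-dual-result} is not required here because every morphism of $\Cc$ is already fully dualizable. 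Theorem~\ref{thm:JFS-oplax-dual} now shows that $f$ is $(n-k)$-dualizable in $\Cc^{\to}_{(k)}$. Since $f$ was an arbitrary object, every object of $\Cc^{\to}_{(k)}$ is $(n-k)$-dualizable.

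To finish we must show that the maximal subcategory $(\Cc^{\to}_{(k)})^{(n-k)d}$ is all of $\Cc^{\to}_{(k)}$, that is, that every $i$-morphism for $1\le i<n-k$ is adjunctible. We would apply Lemma~\ref{lem:dual-towers} in $\Cc^{\to}_{(k)}$: an $i$-morphism $g$ lies in the $(n-k)$-dualizable part as soon as its source and target do and $g$ is $(n-k-i)$-times t-adjunctible. We proceed by induction on $i$, starting from the object case above. The needed adjunctibility of $i$-morphisms in the oplax arrow category should follow from the analysis of \cite[\S7]{JFS15}. There, $i$-morphisms of $\Cc^{\to}_{(k)}$ are treated as objects of iterated arrow categories, namely as objects of $(\Cc^{\to}_{(k)})^{\to}_{(i)}$, and adjunctibility of such a morphism is reduced to one-sided adjunctibility of the components and of the filling cell $g^\sharp$ in $\Cc$. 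All of these are morphisms of $\Cc$ and so are fully adjunctible.

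The main obstacle is this last step. Adjoints of a morphism in an oplax arrow category exist only when the filling cell is right-adjunctible of the appropriate depth, and one has to check that the mate construction produces a filler in the correct (oplax) direction at every level. I would first try to deduce the step formally: apply the object-level statement just proved to the symmetric monoidal $(\infty,n-k)$-category $\Cc^{\to}_{(k)}$ in place of $\Cc$, using that for $i<n-k$ the arrow categories of $\Cc^{\to}_{(k)}$ embed compatibly into arrow categories of $\Cc$, via \cite[Theorem~7.6]{JFS15} and its morphism-level analogue. If that embedding is unavailable, I would verify adjunctibility of $1$-morphisms $(h,h',u)$ directly. For the adjoint I would take $(h^R,h'^R,\text{mate of }u)$, where the mate is built from the units and counits of $h\dashv h^R$ and $h'\dashv h'^R$. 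The zig-zag cells would then be assembled from the triangle identities in $\Cc$, and the construction iterated upwards in $i$.
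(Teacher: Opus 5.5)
Your skeleton matches the paper's proof. You induct on $k$, you handle objects of $\Cc^{\to}_{(k)}$ by the Johnson-Freyd--Scheimbauer criterion together with the inductive hypothesis, and you treat $i$-morphisms via \cite[\S7]{JFS15}. Your object step is correct; the paper only uses the $1$-dualizability version of \Cref{thm:JFS-oplax-dual}, but the full statement works too.

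The gap is the step you flag yourself. You never show that every $i$-morphism of $\Cc^{\to}_{(k)}$, $1\le i<n-k$, has left and right adjoints, and your reason why it ``should follow'' is inaccurate. The paper uses a one-step criterion:
\begin{itemize}
\item an $i$-morphism $f$ of $\Cc^{\to}_{(k)}$ is right-adjunctible iff $S_k(f)$ and $T_k(f)$ are right-adjunctible in $\Cc^{\to}_{(k-1)}$ and $f^\sharp$ is right-adjunctible in $\Cc$;
\item it is left-adjunctible iff $S_k(f)$ and $T_k(f)$ are left-adjunctible and a certain mate of $f^\sharp$ is left-adjunctible in $\Cc$.
\end{itemize}
For $k\ge 2$ the components $S_k(f),T_k(f)$ are $i$-morphisms of $\Cc^{\to}_{(k-1)}$, not of $\Cc$. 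Their adjunctibility is exactly where the inductive hypothesis on $k$ enters, since $i<n-k+1$. Only $f^\sharp$ and its mate are $(k+i)$-morphisms of $\Cc$, adjunctible because $k+i<n$. With this criterion you are done by the definition of having $(n-k)$-duals. The detour through \Cref{lem:dual-towers}, towers of adjunctions, and a separate induction on $i$ is unnecessary.

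Neither of your fallbacks closes the gap.
\begin{itemize}
\item Your formal route is circular or underspecified. Either it presupposes the conclusion, because the object-level result assumes the ambient category already has duals, or it rests on an unspecified ``morphism-level analogue'' of \Cref{thm:JFS-oplax-dual}. That analogue is exactly the missing criterion.
\item Your direct construction fails as written. Take $(h,h',u)$ with $u\colon g\circ h\Rightarrow h'\circ f$. A right adjoint $(h^R,h'^R,w)$ needs $w\colon f\circ h^R\Rightarrow h'^R\circ g$. The mate correspondence for $h\dashv h^R$, $h'\dashv h'^R$ produces such a cell only from a cell $h'\circ f\Rightarrow g\circ h$, which points opposite to $u$. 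You must first pass to a right adjoint $u^R$ of the filler and take its mate. The $3$-cells filling the unit and counit in $\Cc^{\to}_{(1)}$ are then built from the counit and unit of $u\dashv u^R$, not just from triangle identities for $h$ and $h'$. This is why the criterion requires adjunctibility of $f^\sharp$.
\end{itemize}

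Your opening sentence announces \Cref{thm:main-dual-result}, and you can use it to avoid the left-adjoint and mate analysis entirely, as in the remark after the paper's proof. The right-adjoint criterion shows recursively that every $i$-morphism of $\Cc^{\to}_{(k)}$ is $(n-k-i)$-times right-adjunctible. Then apply \Cref{thm:main-dual-result} inside $\Cc^{\to}_{(k)}$, inductively in $i$ and starting from your object step. This gives ambient $(n-k)$-dualizability of every $i$-morphism, and hence left adjoints.
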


\begin{proof}
Throughout the proof, we use the dualizability and adjunctibility criteria for morphisms in the oplax arrow categories, as established in \cite[Section~7]{JFS15}. The case $k=0$ is true by assumption. We proceed by induction on $k$, so assume that $\Cc^{\to}_{(k-1)}$ has $(n-k+1)$-duals for some $1\leq k \leq n-1$. Note that the case $k=n$ is vacuous.

We first treat objects. An object $f$ is 1-dualizable in $\Cc^{\to}_{(k)}$ if and only if $S_k(f)$ and $T_k(f)$ are 1-dualizable in $\Cc^{\to}_{(k-1)}$, and $f^{\sharp}$ is right-adjunctible in $\Cc$. Both conditions hold: the former by the inductive hypothesis, and the latter since $f^{\sharp}$ is a $k$-morphism of $\Cc$ with $k < n$ and $\Cc$ has $n$-duals.

Next, we show that every $i$-morphism $f$ in $\Cc^{\to}_{(k)}$ for $1\leq i < n-k$ is right-adjunctible. An $i$-morphism $f$ is right-adjunctible in $\Cc^{\to}_{(k)}$ if and only if $S_k(f)$ and $T_k(f)$ are right-adjunctible in $\Cc^{\to}_{(k-1)}$ and the underlying bulk $(k+i)$-morphism $f^{\sharp}$ is right-adjunctible in $\Cc$. Again, both conditions hold: the former by the inductive hypothesis, and the latter since $f^\sharp$ is a $(k+i)$-morphism in $\Cc$ with $k +i <n$, and $\Cc$ has $n$-duals.

It remains to establish left-adjunctibility for an $i$-morphism $f$ in $\Cc^{\to}_{(k)}$ for $1\leq i < n-k$. An $i$-morphism $f$ is left-adjunctible in $\Cc^{\to}_{(k)}$ if and only if $S_k(f)$ and $T_k(f)$ are left-adjunctible in $\Cc^{\to}_{(k-1)}$ and a certain mate of the underlying bulk $(k+i)$-morphism $f^{\sharp}$ is left-adjunctible in $\Cc$. The former holds by the inductive hypothesis. The latter holds because this mate is again a $(k+i)$-morphism of $\Cc$, with $k+i<n$, and $\Cc$ has $n$-duals.

Thus, every object of $\Cc^\to_{(k)}$ is dualizable and every $i$-morphism for $1\leq i<n-k$ has both left and right adjoints. This completes the inductive step.
\end{proof}

\begin{remark}
Alternatively, after establishing right-adjunctibility, the existence of left adjoints can be deduced from \Cref{thm:main-dual-result}. Indeed, \Cref{thm:JFS-oplax-dual}, together with the inductive hypothesis, shows that every object of $\Cc^\to_{(k)}$ is $(n-k)$-dualizable, while the preceding argument implies recursively that every $i$-morphism is $(n-k-i)$-times right-adjunctible. Applying \Cref{thm:main-dual-result} inductively in $i$ then shows that every $i$-morphism for $1\leq i<n-k$ is ambiently $(n-k)$-dualizable, and hence left-adjunctible.
\end{remark}

\begin{corollary}\label{cor:O(n-k)-action}
Let $\Cc$ be a symmetric monoidal $(\infty,n)$-category and let $0\leq k\leq n$. The space of ambiently $n$-dualizable $k$-morphisms in $\Cc$ carries a canonical $O(n-k)$-action.
\end{corollary}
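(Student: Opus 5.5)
The plan is to deduce the corollary from \Cref{thm:O(n-k)-action} combined with the ordinary framed cobordism hypothesis (or, more minimally, its consequence that the maximal groupoid of a symmetric monoidal $(\infty,m)$-category with $m$-duals carries an $O(m)$-action). First, I will identify the space of ambiently $n$-dualizable $k$-morphisms with the underlying $\infty$-groupoid of $\DefCat_k^n(\Cc)=\iota_{n-k}((\Cc^{nd})^{\to}_{(k)})$. This is essentially the remark following \Cref{def:n-def-dual-cat}: the objects of $(\Cc^{nd})^{\to}_{(k)}$ are the $k$-morphisms of $\Cc^{nd}$, which are exactly the ambiently $n$-dualizable $k$-morphisms. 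The invertible morphisms in the oplax arrow category are squares whose filling cells are invertible, so the core is the space of such $k$-morphisms. I would check this last point using the description of invertibility in \cite[Section~7]{JFS15}: a morphism is invertible iff its source and target components and its filler are invertible.

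Second, I will apply \Cref{thm:O(n-k)-action} to the symmetric monoidal $(\infty,n)$-category $\Cc^{nd}$, which has $n$-duals by definition. This gives that $(\Cc^{nd})^{\to}_{(k)}$ has $(n-k)$-duals. Passing to $\iota_{n-k}$ preserves this property, since duals and adjoints for $i$-morphisms with $i<n-k$ only involve cells of dimension at most $n-k$, together with invertible higher cells. So $\DefCat_k^n(\Cc)$ is a symmetric monoidal $(\infty,n-k)$-category with $(n-k)$-duals.

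Finally, for any symmetric monoidal $(\infty,m)$-category $\Dd$ with $m$-duals, the framed cobordism hypothesis identifies the core $\Dd^{\simeq}$ with $\mathrm{Fun}^\otimes(\Bord_m^{\fr},\Dd)$. The $O(m)$-action on $\Bord_m^{\fr}$, which rotates framings, then induces a canonical $O(m)$-action on $\Dd^{\simeq}$. Taking $m=n-k$ and $\Dd=\DefCat_k^n(\Cc)$ gives the claimed action. The edge cases are immediate: for $k=0$ this is Lurie's $O(n)$-action on $n$-dualizable objects, and for $k=n$ the group $O(0)$ is trivial.

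I expect the main subtlety to be the first step, namely that the core of the truncated oplax arrow category really is the space of ambiently $n$-dualizable $k$-morphisms rather than some twisted variant. A second point to address is canonicity: the action depends on no choices because the cobordism hypothesis equivalence is canonical.
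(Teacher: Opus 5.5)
Your proposal is correct and follows the paper's own argument: apply \Cref{thm:O(n-k)-action} to $\Cc^{nd}$, pass to the $(n-k)$-truncation $\DefCat_k^n(\Cc)$, and use the ordinary cobordism hypothesis to put an $O(n-k)$-action on its underlying $\infty$-groupoid. Your additional remarks, namely identifying the core of the oplax arrow category with the space of $k$-morphisms and checking that truncation preserves $(n-k)$-duals, spell out points the paper takes for granted.
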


\begin{proof}
Applying \Cref{thm:O(n-k)-action} to $\Cc^{nd}$ shows that $(\Cc^{nd})^\to_{(k)}$, and hence its $(n-k)$-truncation $\DefCat_k^n(\Cc)$, has $(n-k)$-duals. The ordinary cobordism hypothesis therefore equips its underlying $\infty$-groupoid, which is the space of ambiently $n$-dualizable $k$-morphisms in $\Cc$, with an $O(n-k)$-action.
\end{proof}

\begin{remark}
Theorem~\ref{thm:O(n-k)-action} and Corollary~\ref{cor:O(n-k)-action} also apply to the lax arrow categories in place of the oplax arrow categories. The arguments in the proof of Theorem~\ref{thm:O(n-k)-action} exchange the roles of right and left adjoints.
\end{remark}

\subsubsection{Statement of the reformulation}

We now propose a reformulation of the cobordism hypothesis for the prototypical defect types introduced in Section \ref{sec:def-structure}. Let $\defect = (\SingDat,\Lab)$ be an $n$-dimensional defect datum of length $k$ as in Definition \ref{def:defect-datum}. For $1\leq i \leq k$, the pullback diagram
\begin{equation*}
    \begin{tikzcd}
    \underline{\RR} \oplus \zeta_i \arrow[r,"\tilde{q}"] \arrow[d]  & \zeta_{i-1} \arrow[d] \\
    X_i \arrow[r,"q"] & X_{i-1} 
    \end{tikzcd}
\end{equation*}
induces a symmetric monoidal functor
\begin{align*}
    \delta_i : \Bord_{n-i}^{(X_{i},\zeta_{i}) } \to \Bord_{n-i+1}^{(X_{i-1},\zeta_{i-1}) }
\end{align*}
Similarly, for each $1\leq i \leq k$, there are maps $s_i,t_i:L_{i} \to L_{i-1}$ taking a label in $L_i$ to its source and target labels. 

\begin{conjecture}\label{conj:main-conj}
Let $\Def$ be an $n$-dimensional defect datum of length $k$. Let $\Cc$ be a symmetric monoidal $(\infty,n)$-category. The following data are equivalent:
\begin{enumerate}
    \item Symmetric monoidal functors $Z:\Bord_{n}(\Def) \to \Cc$.
    \item For each $0\leq i \leq k$, a family of symmetric monoidal functors
    \begin{align*}
         \{\beta_{\lambda}:\Bord_{n-i}^{(X_i,\zeta_i)} \to \Cc^{\to}_{(i)}\}_{\lambda\in L_i}  
    \end{align*}
    such that the following diagrams commute:
    \begin{equation}\label{eq:oplax-source-target-diags}
       \begin{tikzcd}[row sep=small,column sep=small]
	{\Bord_{n-i}^{(X_{i},\zeta_{i})}} && {\Cc^{\to}_{(i)}} \\
	\\
	{\Bord_{n-i+1}^{(X_{i-1},\zeta_{i-1})}} && {\Cc^{\to}_{(i-1)}}
	\arrow["{\beta_{\lambda}}", from=1-1, to=1-3]
	\arrow["{\delta_i}"', from=1-1, to=3-1]
	\arrow["{S_i}", from=1-3, to=3-3]
	\arrow["{\beta_{s_i(\lambda)}}"', from=3-1, to=3-3]
    \end{tikzcd} 
        \qquad \begin{tikzcd}[row sep=small,column sep=small]
	{\Bord_{n-i}^{(X_{i},\zeta_{i})}} && {\Cc^{\to}_{(i)}} \\
	\\
	{\Bord_{n-i+1}^{(X_{i-1},\zeta_{i-1})}} && {\Cc^{\to}_{(i-1)}}
	\arrow["{\beta_{\lambda}}", from=1-1, to=1-3]
	\arrow["{\delta_i}"', from=1-1, to=3-1]
	\arrow["{T_i}", from=1-3, to=3-3]
	\arrow["{\beta_{t_i(\lambda)}}"', from=3-1, to=3-3]
    \end{tikzcd}
    \end{equation}
    for every $1\leq i \leq k$ and $\lambda \in L_i$.
\end{enumerate}
Moreover, for $0\leq i \leq k$, this equivalence satisfies $Z(\Icube{i}(\tilde{x},\lambda)) = \beta_{\lambda}(\pt_{\tilde{x}})$, where $\tilde{x} \in \widetilde{X}_i$ is an element in the frame bundle of $\zeta_i$, $\lambda \in L_i$, and $\Icube{i}(\tilde{x},\lambda)$ is the standard cube introduced in Section \ref{sec:standard-cubes} .
\end{conjecture}

\begin{remark}
The equivalence of Conjecture \ref{conj:main-conj} is $O(n-i)$-equivariant for all $0\leq i\leq k$ in the following way. The group $O(n-i)$ acts on $\Icube{i}(\tilde{x},\lambda)$ via its action on $\tilde{x} \in \widetilde{X}_i$. The $O(n-i)$-action on the family $\{\beta_{\lambda}\}_{\lambda \in L_i}$ is induced by its action on $(X_i,\zeta_i)$-structures. The relation $Z(\Icube{i}(\tilde{x},\lambda)) = \beta_{\lambda}(\pt_{\tilde{x}})$ ensures that these actions coincide.
\end{remark}

\begin{remark}\label{rem:infty-groupoid-formulation}
Consider the case that $\Def^{\fr}_1$ is the defect datum corresponding to a framed domain wall. Then \Cref{conj:main-conj} says that a functor $\Bord_{n}(\Def^{\fr}_1) \to \Cc$ is equivalent to a pair of topological symmetric monoidal functors $F_{A},F_B :\Bord_{n}^{\fr} \to \Cc$, and a symmetric monoidal functor $\beta:\Bord_{n-1}^{\fr}\to \Cc^{\to}_{(1)}$ such that $S \circ \beta  = \tau_{\leq n-1} F_A $ and $T \circ \beta = \tau_{\leq n-1} F_B$. This recovers the reformulation presented in \Cref{thm:framed-dom-wall-intro}.
\end{remark}

\begin{remark}
We present our statements as equivalences between different types of data, following the conventions of \cite{L09}. Formally, these statements assert an equivalence between the equivalence classes of objects in the corresponding $\infty$-groupoids. Given an appropriate Yoneda Lemma for symmetric monoidal $(\infty,n)$-categories, this is equivalent to asking that the corresponding $\infty$-groupoids are the same; see \cite[Remark 2.4.9]{L09}. 

In the case of a framed domain wall, \Cref{conj:main-conj} says that the following diagram is a pullback of $\infty$-groupoids
\[\begin{tikzcd}
	{\Fun^{\otimes}(\Bord_{n}(\Def^\fr_1),\Cc)} && {\Fun^{\otimes}(\Bord_{n-1}^{\fr},\iota_{n-1}(\Cc^{\to}_{(1)}))} \\
	\\
	{\Fun^{\otimes}(\Bord_{n}^\fr,\Cc)^{\times 2}} && {\Fun^{\otimes}(\Bord_{n-1}^\fr,\iota_{n-1}\Cc)^{\times 2}}
	\arrow[from=1-1, to=1-3]
	\arrow["{(\iota_A^*, \iota_B^*)}"', from=1-1, to=3-1]
	\arrow["{(S_*, T_*)}", from=1-3, to=3-3]
	\arrow["{\tau_{\leq{n-1}}^{\times 2}}", from=3-1, to=3-3]
\end{tikzcd}\]
where the top arrow is defined, by the ordinary cobordism hypothesis, by sending $Z$ to the oplax natural transformation $\beta$ such that $\beta(\pt_+) = Z(\Idom_+)$.
\end{remark}

\begin{remark}
We also propose a version of Conjecture \ref{conj:main-conj}, replacing the oplax arrow categories with lax arrow categories; nothing else is required to change.
\end{remark}

\begin{remark}
We state Conjecture \ref{conj:main-conj} in the fully extended setting for comparison with the Cobordism Hypothesis for defects. However, a version of Conjecture \ref{conj:main-conj} should also hold in the non-fully extended setting. The caveat here is that one must only allow objects to be 0-flag-stratified, 1-morphisms to be 1-flag-stratified, and so on. This condition is immediate in the fully extended setting.
\end{remark}

Before comparing \Cref{conj:main-conj} with the cobordism hypothesis for defects, we observe the following inductive formulation of \Cref{conj:main-conj}.

\begin{conjecture}\label{conj:main-conj-ind}
Let $\Def$ be an $n$-dimensional defect datum of length $k \geq 1$ and let $\Def'$ be its truncation to length $k-1$. Let $\Cc$ be a symmetric monoidal $(\infty,n)$-category and let $$Z' : \Bord_{n}(\Def') \to \Cc$$ be a symmetric monoidal functor. There is an equivalence between the following:
\begin{enumerate}
    \item Symmetric monoidal functors $Z:\Bord_{n}(\Def) \to \Cc$ extending $Z'$.
    \item Families of symmetric monoidal functors
    \begin{align*}
         \{\beta_{\lambda}:\Bord_{n-k}^{(X_k,\zeta_k)} \to \Cc^{\to}_{
         (k)}\}_{\lambda\in L_k}  
    \end{align*}
    such that $S_k(\beta_{\lambda}(\pt_{\widetilde{x}})) = Z'(\Icube{k-1}(\widetilde{q}_k(\widetilde{x}),s_k(\lambda)))$ and $T_k(\beta_{\lambda}(\pt_{\widetilde{x}})) = Z'(\Icube{k-1}(\widetilde{q}_k(\widetilde{x}),t_k(\lambda)))$ for all $\lambda \in L_k$ and $\widetilde{x}\in \widetilde{X}_k$.
\end{enumerate}
Moreover, this equivalence satisfies $Z(\kcube(\widetilde{x},\lambda)) = \beta_{\lambda}(\pt_{\widetilde{x}})$.
\end{conjecture}

\begin{remark}
The case $k=0$ may be regarded as the base case of this inductive formulation. There is then no lower-level defect datum $\Def'$. We interpret $\Bord_n(\Def')$ as the trivial bordism category, so that the functor $Z'$ carries no data, and set $\Cc^{\to}_{(0)} := \Cc$. In this case, the conjecture follows from the ordinary cobordism hypothesis, since an $(X_0\times L_0,\zeta_0\times L_0)$-structure on the point is equivalent to an $(X_0,\zeta_0)$-structure on the point and an element $\lambda \in L_0$.
\end{remark}

\begin{lemma}
Let $k\geq 1$ and suppose that \Cref{conj:main-conj} holds for all $n$-dimensional defect data of length at most $k-1$. Then, assuming the ordinary cobordism hypothesis, \Cref{conj:main-conj} holds for all $n$-dimensional defect data of length $k$ if and only if \Cref{conj:main-conj-ind} holds for all $n$-dimensional defect data of length $k$.
\end{lemma}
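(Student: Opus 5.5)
The plan is to compare the two data sets level by level, using the inductive hypothesis for the lower levels and the ordinary cobordism hypothesis for the top level. Write $\Def'$ for the truncation of $\Def$ to length $k-1$. Conjecture~\ref{conj:main-conj} for $\Def$ describes a functor $Z$ by a system $\{\beta_\lambda\}_{\lambda\in L_i}$ for $0\le i\le k$, compatible with sources and targets as in \eqref{eq:oplax-source-target-diags}. Truncating this system to $0\le i\le k-1$ gives exactly the data of item (2) of Conjecture~\ref{conj:main-conj} for $\Def'$. Since that conjecture holds for $\Def'$ by hypothesis, the lower part of the system is equivalent to a functor $Z':\Bord_n(\Def')\to\Cc$. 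Moreover, $Z'(\Icube{i}(\tilde x,\lambda))=\beta_\lambda(\pt_{\tilde x})$ for $i\le k-1$. Restriction along $\Bord_n(\Def')\to\Bord_n(\Def)$ is compatible with truncating the system.

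First I would show that both conjectures for $\Def$ amount to the same fibrewise statement over $Z'$. Item (1) of Conjecture~\ref{conj:main-conj-ind} is the fibre of the restriction map $\Fun^\otimes(\Bord_n(\Def),\Cc)\to\Fun^\otimes(\Bord_n(\Def'),\Cc)$ over $Z'$. Under the identification above, item (2) of Conjecture~\ref{conj:main-conj} is fibred over the lower system, equivalently over $Z'$. Its fibre over $Z'$ consists of families $\beta_\lambda:\Bord_{n-k}^{(X_k,\zeta_k)}\to\Cc^\to_{(k)}$ for $\lambda\in L_k$, such that the two squares \eqref{eq:oplax-source-target-diags} commute with $i=k$, where $\beta_{s_k(\lambda)}$ and $\beta_{t_k(\lambda)}$ are now fixed by $Z'$. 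A map between spaces over a common base is an equivalence if and only if it is an equivalence on every fibre. So Conjecture~\ref{conj:main-conj} for $\Def$ is equivalent to asserting, for every $Z'$, that this fibre is equivalent to item (1) of Conjecture~\ref{conj:main-conj-ind}, with the cube compatibility. The compatibilities for $i<k$ are already accounted for by $Z'$.

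It then remains to show that this fibre of commuting squares agrees with item (2) of Conjecture~\ref{conj:main-conj-ind}, whose conditions concern only values on points. Here I would apply the ordinary cobordism hypothesis, with $(X_k,\zeta_k)$-structure, to the targets $\Cc^\to_{(k)}$ and $\Cc^\to_{(k-1)}$, or more precisely to their maximal $(n-k)$-dualizable subcategories. A functor $\beta_\lambda$ is then determined by the $\widetilde X_k$-indexed, $O(n-k)$-equivariant family $\tilde x\mapsto\beta_\lambda(\pt_{\tilde x})$.

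Commutativity of the square with $S_k$ is an identification of the two functors $S_k\circ\beta_\lambda$ and $\beta_{s_k(\lambda)}\circ\delta_k$ out of $\Bord_{n-k}^{(X_k,\zeta_k)}$. By the cobordism hypothesis, such an identification is the same as an equivariant identification of their values on the points $\pt_{\tilde x}$. The left functor gives $S_k(\beta_\lambda(\pt_{\tilde x}))$. The right functor gives $\beta_{s_k(\lambda)}(\delta_k\pt_{\tilde x})=\beta_{s_k(\lambda)}(\pt_{\tilde q_k(\tilde x)})$, where $\delta_k\pt_{\tilde x}$ is the interval with the structure induced by $\tilde q_k$. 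By the cube identity for $Z'$ (reading $\beta_{s_k(\lambda)}$ on this interval as the value on $\Icube{k-1}$), this is $Z'(\Icube{k-1}(\tilde q_k(\tilde x),s_k(\lambda)))$. The target square is handled the same way, and the relation $Z(\kcube(\tilde x,\lambda))=\beta_\lambda(\pt_{\tilde x})$ matches the cube condition in both conjectures.

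The main obstacle is this last step. The ordinary cobordism hypothesis must be applied in $\Cc^\to_{(k)}$, and this requires the values $\beta_\lambda(\pt_{\tilde x})$ to be $(n-k)$-dualizable there. By Theorem~\ref{thm:JFS-oplax-dual}, that means checking that the sources and targets are dualizable (supplied by $Z'$) and that the underlying $k$-morphism is $(n-k)$-times right-adjunctible. One must also check that the $O(n-k)$-equivariance matches on both sides, which is Corollary~\ref{cor:O(n-k)-action}. Finally, one must track carefully that $\delta_k$ on points corresponds to the face of the cube. If this does not match on the nose, I would first prove it for framed structures and then descend along $\widetilde X_k\to X_k$ by equivariance.
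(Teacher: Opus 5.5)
Your proposal is correct and follows the paper's proof. You use the inductive hypothesis to identify levels $0,\dots,k-1$ of the system with $Z'$. You then use the ordinary cobordism hypothesis for functors $\Bord_{n-k}^{(X_k,\zeta_k)}\to\Cc^\to_{(k-1)}$ to trade the commuting source/target squares at level $k$ for the pointwise conditions of \Cref{conj:main-conj-ind}; your fibrewise formulation makes explicit the paper's step that ``the additional data is exactly the same''.

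The ``main obstacle'' you flag is not one. Each $\beta_\lambda$ is a symmetric monoidal functor out of a bordism category with duals, so its point values are automatically $(n-k)$-dualizable in $\Cc^\to_{(k)}$, and the equivariance is supplied by the cobordism hypothesis itself. Hence \Cref{thm:JFS-oplax-dual} and \Cref{cor:O(n-k)-action} are not needed here; they belong to the comparison with the cobordism hypothesis for defects in \Cref{thm:main-comparison}. Likewise, $\delta_k(\pt_{\tilde x})=\pt_{\tilde q_k(\tilde x)}$ is a point, not an interval, by the definition of $\tilde q_k$, so no descent from the framed case is required.
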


\begin{proof}
Let $\Def$ be an $n$-dimensional defect datum of length $k$ and let $\Def'$ be its truncation to length $k-1$. By assumption, specifying a symmetric monoidal functor $Z'\colon\Bord_n(\Def')\longrightarrow\Cc$ is equivalent to specifying the compatible families
\begin{equation*}
    \{
    \beta^{(i)}_\lambda\colon
    \Bord_{n-i}^{(X_i,\zeta_i)}
    \longrightarrow
    \Cc^\to_{(i)}
    \}_{\lambda\in L_i},
\end{equation*}
for $0\leq i\leq k-1$, appearing in \Cref{conj:main-conj}. Under this equivalence, $Z'\bigl(\Icube{i}(\widetilde x,\lambda)\bigr) = \beta^{(i)}_\lambda(\pt_{\widetilde x})$ for every $\lambda\in L_i$ and $\widetilde x\in\widetilde X_i$.

Consequently, by Conjecture \ref{conj:main-conj} applied to $\Def$, extending this system from length $k-1$ to length $k$ amounts to choosing a family
\begin{equation*}
    \left\{
    \beta_\lambda\colon
    \Bord_{n-k}^{(X_k,\zeta_k)}
    \longrightarrow
    \Cc^\to_{(k)}
    \right\}_{\lambda\in L_k}
\end{equation*}
whose source and target agree with the codimension-$(k-1)$ functors determined by $Z'$. By the ordinary cobordism hypothesis, these compatibilities are determined, up to contractible choice, by evaluation on points with $(X_k,\zeta_k)$-structure. Using the preceding description of $Z'$, they become
\begin{align*}
    S_k\bigl(\beta_\lambda(\pt_{\widetilde x})\bigr)
    =
    Z'\bigl(\Icube{k-1}(\widetilde q_k(\widetilde x),s_k(\lambda))\bigr) \quad{\text{and}}\quad 
    T_k\bigl(\beta_\lambda(\pt_{\widetilde x})\bigr)
    =
    Z'\bigl(\Icube{k-1}(\widetilde q_k(\widetilde x),t_k(\lambda))\bigr)
\end{align*}
for every $\lambda\in L_k$ and $\widetilde x\in\widetilde X_k$.

Thus, after identifying the lower-codimension system with $Z'$, the additional data appearing in \Cref{conj:main-conj} is exactly the data appearing in \Cref{conj:main-conj-ind}. Moreover, in both formulations, the correspondence with an extension $Z$ is characterized by $Z\bigl(\kcube(\widetilde x,\lambda)\bigr) = \beta_\lambda(\pt_{\widetilde x})$. It follows that the two conjectures are equivalent in length $k$.
\end{proof}

\subsection{Comparison with the cobordism hypothesis for defects}\label{sec:comparison}

We now compare Conjecture~\ref{conj:main-conj} with the cobordism hypothesis for defects. We first specialize the cobordism hypothesis with singularities of \cite[Theorem 4.3.11]{L09} to the prototypical defect structures introduced in Section~\ref{sec:def-structure}. The resulting formulation classifies defects inductively in the length of their underlying defect structure, in terms of $O(n-k)$-equivariant families of ambiently $n$-dualizable $k$-morphisms. We then show that, assuming the ordinary cobordism hypothesis, this classification is equivalent to Conjecture~\ref{conj:main-conj}.

\subsubsection{The cobordism hypothesis for defects}

First, consider a framed defect of codimension $k$. This corresponds to the defect datum given by the framed defect tangential structure of Example \ref{ex:framings-n} and the codimension-$k$ defect labeling of Example \ref{ex:codim-k-defect-labeling}. We denote this defect datum by $\Def^{\fr}_k$. The $k$-cube $\kcube$ admits a canonical $\Def_{k}^{\fr}$ structure, and we denote the $k$-cube with this structure by $\kcube_+$.

In light of the $O(n-k)$-action provided by \Cref{cor:O(n-k)-action}, the cobordism hypothesis with singularities applied to the framed defect of codimension $k$ can be restated as follows.

\begin{hypothesis}[{Lurie \cite[Theorem 4.3.11]{L09}}]\label{hyp:framed-def-2}
Let $\Cc$ be a symmetric monoidal $(\infty,n)$-category. Then evaluation on the  $k$-morphism given by the framed $k$-cube $\kcube_+$ yields an $O(n-k)$-equivariant equivalence
\begin{align*}
    \Fun^{\otimes}(\Bord_{n}(\Def^{\fr}_k),\Cc )\xrightarrow{\simeq} \DefCat_{k}^n(\Cc)^\sim
\end{align*}
of $\infty$-groupoids, where $\DefCat_{k}^n(\Cc)$ is the $(\infty,n-k)$-category of ambiently $n$-dualizable $k$-morphisms in $\Cc$ given in Definition~\ref{def:n-def-dual-cat}.
\end{hypothesis}

\begin{remark}
Setting $k=0$ in Hypothesis \ref{hyp:framed-def-2} yields the plain framed cobordism hypothesis. In this case, the $O(n)$-equivariance is immediate by definition: the $O(n)$-action on $\Def_{0}^n(\Cc)^\sim := (\Cc^{nd})^{\sim}$ is the one induced by the equivalence $\Fun^{\otimes}(\Bord_{n}^\fr,\Cc) \xrightarrow{\simeq} (\Cc^{nd})^\sim $.
\end{remark}

\begin{remark}
Since the equivalence of \Cref{hyp:framed-def-2} is implemented by evaluation on the framed $k$-cube $\Icube{k}_+$, the ordinary cobordism hypothesis implies that the following diagram
\begin{equation*}
    \begin{tikzcd}[row sep=small,column sep=small]
	{\Fun^{\otimes}(\Bord_n(\Def_k^{\fr}),\Cc)} && {\Def_{k}^{n}(\Cc)^\sim} \\
	\\
	{\Fun^{\otimes}(\Bord_n(\Def_{k-1}^{\fr}),\Cc)} && {\Def_{k-1}^{n}(\Cc)^\sim}
	\arrow["\simeq", from=1-1, to=1-3]
	\arrow["{\underline{\hspace{3mm}}\ \circ\ \iota_{A}}"', from=1-1, to=3-1]
	\arrow["{S_k}", from=1-3, to=3-3]
	\arrow["\simeq", from=3-1, to=3-3]
\end{tikzcd}
\end{equation*}
commutes. Here $\iota_{A}:\Bord_n(\Def_{k-1}^{\fr}) \to \Bord_n(\Def_k^{\fr})$ is the canonical inclusion corresponding to the framed codimension-$(k-1)$ defect labeled by the source of the framed codimension-$k$ defect. Replacing $S$ with $T$ and $A$ with $B$ results in another commutative diagram.
\end{remark}

We now extend Hypothesis \ref{hyp:framed-def-2} to the setting of arbitrary defect tangential structures of length $k$. Let $\defect = (\SingDat,\Lab)$ be an $n$-dimensional defect datum of length $k$ as in Definition \ref{def:defect-datum} and let $\defect'$ be its truncation to an $n$-dimensional defect datum of length $k-1$. Recall from Section \ref{sec:standard-cubes}, that $\widetilde{X}_k$ is the frame bundle of $\zeta_k \to X_k$ and $\tilde{q}_k:\widetilde{X}_k\to \widetilde{X}_{k-1}$ is the canonical map induced by the pullback diagram \eqref{eq:tang-str-pb-n}.

The following hypotheses are obtained by specializing the cobordism hypothesis with singularities to the class of defect structures introduced in Section~\ref{sec:def-structure}.

\begin{hypothesis}[{Lurie \cite[Theorem 4.3.11]{L09}}]\label{hyp:def-cob-hyp-main}
Let $\Cc$ be a symmetric monoidal $(\infty,n)$-category. Let $$Z' : \Bord_{n}(\Def') \to \Cc$$ be a symmetric monoidal functor. Then there is an $O(n-k)$-equivariant equivalence between
\begin{enumerate}
    \item Symmetric monoidal functors $Z:\Bord_{n}({\defect}) \to \Cc$ extending $Z'$ (here $O(n-k)$ acts via its action on the defect tangential structure $\SingDat$). 
    \item Families of ambiently $n$-dualizable $k$-morphisms
    \begin{equation*}
        \{f_{\tilde{x},\lambda}:  Z'(\Icube{k-1}(\tilde{q}_k(\tilde{x}),s_{k}(\lambda))) \to Z'(\Icube{k-1}(\tilde{q}_k(\tilde{x}),t_{k}(\lambda)))\}_{\widetilde{x}\in \widetilde{X}_k,\lambda \in L_{k}}
    \end{equation*}
    (here $O(n-k)$ acts on the family by its action on the groupoid of ambiently $n$-dualizable $k$-morphisms coming from \Cref{cor:O(n-k)-action}).
\end{enumerate}
 Moreover, this equivalence is implemented by evaluation on the family of disks $\{\kcube(\tilde{x},\lambda)\}_{\widetilde{x}\in \widetilde{X}_k,\lambda \in L_{k}}$.
\end{hypothesis}

Hypothesis \ref{hyp:def-cob-hyp-main} identifies extensions of $Z'$ with families of ambiently $n$-dualizable $k$-morphisms. Unpacking the $O(n-k)$-equivariance of this equivalence yields the following fixed-point formulation, which is the form of the hypothesis presented in \cite{L09}.

\begin{hypothesis}[{Lurie \cite[Theorem 4.3.11]{L09}}]\label{hyp:def-cob-hyp-main-2}
Let $\Cc$ be a symmetric monoidal $(\infty,n)$-category. Let $$Z' : \Bord_{n}(\Def') \to \Cc$$ be a symmetric monoidal functor. A symmetric monoidal functor $Z:\Bord_{n}(\Def) \to \Cc$ extending $Z'$ is equivalent to an $O(n-k)$-equivariant family
    \begin{equation*}
        \{f_{\tilde{x},\lambda}: Z'(\Icube{k-1}(\tilde{q}_k(\tilde{x}),s_k(\lambda))) \to Z'(\Icube{k-1}(\tilde{q}_k(\tilde{x}),t_k(\lambda)))\}_{\widetilde{x}\in \widetilde{X}_k,\lambda \in L_{k}}
    \end{equation*}
of ambiently $n$-dualizable $k$-morphisms.
\end{hypothesis}

\begin{remark}
The hypotheses above admit a more intrinsic $\infty$-categorical formulation. Inductively, the space of symmetric monoidal functors out of a defect bordism category is obtained from the spaces
\[
\mathrm{Map}_{O(n-i)}(\widetilde{X}_i\times L_i,\DefCat_i^n(\Cc)^\sim)
\]
by an iterated pullback construction using the corresponding source and target maps. Compare with the discussion of equivalent formulations of the ordinary cobordism hypothesis in \cite[Remark 2.4.9]{L09}.
\end{remark}

\subsubsection{The comparison theorem}

We now prove our main comparison theorem, relating the cobordism hypothesis for defects to the classification of defects in terms of oplax natural transformations. 

\begin{theorem}\label{thm:main-comparison}
Assuming the ordinary cobordism hypothesis, the cobordism hypothesis for defects (Hypothesis \ref{hyp:def-cob-hyp-main-2})  holds if and only if Conjecture \ref{conj:main-conj} holds.
\end{theorem}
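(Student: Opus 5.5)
We argue by induction on the length $k$ of the defect datum $\Def$, comparing the inductive formulations on both sides. On the oplax side, the preceding lemma shows that, once Conjecture~\ref{conj:main-conj} is known in lengths $\le k-1$, it holds in length $k$ if and only if Conjecture~\ref{conj:main-conj-ind} does. On the other side, Hypothesis~\ref{hyp:def-cob-hyp-main-2} is already stated inductively: it describes extensions of a fixed $Z'\colon\Bord_n(\Def')\to\Cc$. In length $0$ both statements reduce to the ordinary cobordism hypothesis for the tangential structure $(X_0\times L_0,\zeta_0\times L_0)$, as in the remark following Conjecture~\ref{conj:main-conj-ind}. It therefore suffices to show the following. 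Fix $Z'$, and assume both statements hold in all lengths $<k$ (the induction runs in parallel, so the lower-length case of either statement supplies the other). Then the data in (2) of Conjecture~\ref{conj:main-conj-ind} and the data in Hypothesis~\ref{hyp:def-cob-hyp-main-2} are equivalent, compatibly with evaluation on the cubes $\kcube(\tilde x,\lambda)$.

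Fix $\lambda\in L_k$. By the ordinary cobordism hypothesis applied to the tangential structure $(X_k,\zeta_k)$, a symmetric monoidal functor $\beta_\lambda\colon\Bord_{n-k}^{(X_k,\zeta_k)}\to\Cc^\to_{(k)}$ is the same as an $O(n-k)$-equivariant map
\[
\widetilde X_k\longrightarrow \bigl(((\Cc^\to_{(k)})^{(n-k)\mathrm{d}})\bigr)^\sim,
\qquad \tilde x\longmapsto \beta_\lambda(\pt_{\tilde x}).
\]
Objects of $\Cc^\to_{(k)}$ are $k$-morphisms $f$ of $\Cc$. The source and target conditions in Conjecture~\ref{conj:main-conj-ind} fix $S_k(f)$ and $T_k(f)$ to be $Z'$ evaluated on the $(k-1)$-cubes. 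These cube values are $(n-k+1)$-dualizable in $\Cc^\to_{(k-1)}$: by induction they are images of points under the lower-length functors, so they are ambiently $n$-dualizable $(k-1)$-morphisms.

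The key step is to identify the $(n-k)$-dualizable objects of $\Cc^\to_{(k)}$ with the ambiently $n$-dualizable $k$-morphisms, in the case where source and target are ambiently $n$-dualizable. By Theorem~\ref{thm:JFS-oplax-dual}, $f$ is $(n-k)$-dualizable in $\Cc^\to_{(k)}$ exactly when its source and target are dualizable and $f^\sharp=f$ is $(n-k)$-times right-adjunctible. By Theorem~\ref{thm:main-dual-result}, this is equivalent to $f$ being ambiently $n$-dualizable. Since the subcategory $(\Cc^{nd})^\to_{(k)}$ contains all equivalences between such objects, this gives an equivalence of groupoids
\[
\bigl((\Cc^\to_{(k)})^{(n-k)\mathrm{d}}\bigr)^\sim\big|_{S,T\text{ ambiently $n$-dualizable}}\;\simeq\;\DefCat^n_k(\Cc)^\sim .
\]
This is compatible with $S_k$ and $T_k$.

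It remains to check that the $O(n-k)$-actions agree. On the left, the action is the one from the ordinary cobordism hypothesis applied to $(\Cc^\to_{(k)})^{(n-k)\mathrm{d}}$. On the right, the action from Corollary~\ref{cor:O(n-k)-action} is defined by applying the cobordism hypothesis to $\DefCat^n_k(\Cc)=\iota_{n-k}((\Cc^{nd})^\to_{(k)})$. Both actions therefore arise from the ordinary cobordism hypothesis applied to the same maximal subcategory with $(n-k)$-duals, and they coincide. Taking $O(n-k)$-equivariant maps out of $\widetilde X_k\times L_k$ with prescribed source and target, the data of (2) in Conjecture~\ref{conj:main-conj-ind} become exactly the $O(n-k)$-equivariant families $f_{\tilde x,\lambda}$ of Hypothesis~\ref{hyp:def-cob-hyp-main-2}, with $f_{\tilde x,\lambda}=\beta_\lambda(\pt_{\tilde x})$. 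Both statements assert that $Z\mapsto Z(\kcube(\tilde x,\lambda))$ is an equivalence onto this common space, so one holds if and only if the other does.

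I expect the main obstacle to be the identification of the two $O(n-k)$-actions and the precise matching of $(\Cc^\to_{(k)})^{(n-k)\mathrm{d}}$, with source and target constrained, against $\DefCat^n_k(\Cc)$. In particular, one must make sure that restricting to $\Cc^{nd}$ before forming the arrow category changes nothing. I would handle this via Theorem~\ref{thm:O(n-k)-action} together with the maximality of $\Cc^{nd}$.
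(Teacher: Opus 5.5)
Your proposal is correct and follows essentially the same route as the paper. Both proofs induct on the length $k$, reduce to the inductive formulation (Conjecture~\ref{conj:main-conj-ind}), use the ordinary cobordism hypothesis to trade the functors $\beta_\lambda$ for $O(n-k)$-equivariant families of $(n-k)$-dualizable objects of $\Cc^\to_{(k)}$, and then apply Theorem~\ref{thm:JFS-oplax-dual} together with Theorem~\ref{thm:main-dual-result} to identify these with ambiently $n$-dualizable $k$-morphisms. Your extra check that the two $O(n-k)$-actions agree is something the paper leaves implicit, and it is a refinement rather than a gap; it is best phrased as naturality of the cobordism-hypothesis action along the inclusion $\iota_{n-k}((\Cc^{nd})^\to_{(k)})\to\Cc^\to_{(k)}$, since the two categories are not literally the same.
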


\begin{remark}
In the case of the defect datum $\Def_{1}^{\fr}$ describing a framed domain wall, \Cref{thm:main-comparison} recovers  \Cref{thm:framed-dom-wall-intro} from the introduction.    
\end{remark}

\begin{proof}[Proof of Theorem \ref{thm:main-comparison}]
Fix a dimension $n$. We will give a proof by induction on the length $k$ of an $n$-dimensional defect datum $\defect = (\SingDat,\Lab)$.

\textbf{Base case:} In the case $k=0$, Hypothesis \ref{hyp:def-cob-hyp-main-2} is the plain cobordism hypothesis applied to the tangential structure $(L_0 \times X_0,L_0 \times \zeta_0)$. Recall that an ambiently $n$-dualizable $0$-morphism in $\Cc$ is just an $n$-dualizable object in $\Cc$.

On the other hand, Conjecture \ref{conj:main-conj} is trivial for $k=0$; an $(X_0,\zeta_0)$-structure on a bordism $M$ along with an $L_0$ labeling is equivalent to an $(X_0\times L_0,\zeta_0\times L_0)$-structure on $M$. A symmetric monoidal functor $Z:\Bord_{n}^{(L_0\times X_0,L_0 \times \zeta_0)} \to \Cc$ is equivalent to a family of symmetric monoidal functors $\{\beta_{\lambda} : \Bord_{n}^{(X_0,\zeta_0)}\to \Cc\}_{\lambda\in L_0}$ via the assignment $\beta_{\lambda}(M) = Z(M)$, where the labeling on $M$ is the constant labeling at $\lambda \in L_0$.

Since Hypothesis \ref{hyp:def-cob-hyp-main-2} is an instance of the plain cobordism hypothesis and  Conjecture \ref{conj:main-conj} is trivial, these statements are both true, having assumed the plain cobordism hypothesis.

\textbf{Inductive step:} Let $k\leq n$ and suppose that Theorem \ref{thm:main-comparison} is true for all $i < k$. We must show that Conjecture \ref{conj:main-conj-ind} and Hypothesis \ref{hyp:def-cob-hyp-main-2} are equivalent for a defect datum $\defect$ of length $k$. Let $\defect'$ be the truncation of $\Def$ to an $n$-dimensional defect datum of length $k-1$ and $Z':\Bord_{n}(\Def') \to \Cc$ a symmetric monoidal functor. Conjecture \ref{conj:main-conj-ind} and Hypothesis \ref{hyp:def-cob-hyp-main-2} give two different classifications of extensions of $Z'$ to a symmetric monoidal functor out of $\Bord_{n}(\Def)$. We must show that the data used to classify the extensions are equivalent.

Conjecture \ref{conj:main-conj-ind} classifies an extension of $Z'$ in terms of
\begin{enumerate}[label=(\arabic*)]
    \item A family of symmetric monoidal functors
    \begin{align*}
         \{\beta_{\lambda}:\Bord_{n-k}^{(X_k,\zeta_k)} \to \Cc^{\to}_{(k)}\}_{\lambda\in L_k}  
    \end{align*}
    such that $S_k(\beta_{\lambda}(\pt_{\tilde{x}})) = Z'(\Icube{k-1}(\tilde{q}_k(\tilde{x}),s_k(\lambda)))$ and $T_k(\beta_{\lambda}(\pt_{\tilde{x}})) = Z'(\Icube{k-1}(\tilde{q}_k(\tilde{x}),t_k(\lambda)))$.
\end{enumerate}
The plain cobordism hypothesis gives an equivalence between $(1)$ and the following:
\begin{enumerate}[label=(\arabic*),start=2]
    \item An $O(n-k)$-equivariant family $$\{f_{\tilde{x},\lambda}:  Z'(\Icube{k-1}(\tilde{q}_k(\tilde{x}),s_k(\lambda))) \to Z'(\Icube{k-1}(\tilde{q}_k(\tilde{x}),t_k(\lambda))) \}_{\widetilde{x}\in \widetilde{X}_k,\lambda \in L_{k}}$$ of $(n-k)$-dualizable objects in $\Cc^{\to}_{(k)}$.
\end{enumerate}
The $(n-k)$-dualizable objects of $\Cc_{(k)}^\to$ are classified in \cite{JFS15}, see \Cref{thm:JFS-oplax-dual}; $f_{\tilde{x},\lambda}$ is $(n-k)$-dualizable in $\Cc^{\to}_{(k)}$ if and only if $f_{\tilde{x},\lambda}^\sharp$ is $(n-k)$-times right-adjunctible in $\Cc$. Since the source and target of $f_{\tilde{x},\lambda}^\sharp$ are ambiently $n$-dualizable in $\Cc$, by Theorem \ref{thm:main-dual-result}, the $k$-morphism $f_{\tilde{x},\lambda}^\sharp$ is $(n-k)$-times right-adjunctible in $\Cc$ if and only if $f_{\tilde{x},\lambda}^\sharp$ is ambiently $n$-dualizable in $\Cc$. So the data of $(2)$ is equivalent to
\begin{enumerate}[label=(\arabic*),start=3]
\item An $O(n-k)$-equivariant family $$\{f_{\tilde{x},\lambda}^\sharp:  Z'(\Icube{k-1}(\tilde{q}_k(\tilde{x}),s_k(\lambda))) \to Z'(\Icube{k-1}(\tilde{q}_k(\tilde{x}),t_k(\lambda))) \}_{\widetilde{x}\in \widetilde{X}_k,\lambda \in L_{k}}$$ of ambiently $n$-dualizable $k$-morphisms in $\Cc$.
\end{enumerate}
This is the data used to classify an extension of $Z'$ in Hypothesis \ref{hyp:def-cob-hyp-main-2}. Since the data in (1) and (3) are equivalent, the two classifications of extensions of $Z'$ agree. This completes the inductive step and the proof.
\end{proof}

\begin{corollary}\label{cor:general-intro-thm-1}
Assume the ordinary cobordism hypothesis and the cobordism hypothesis for defects. Then \Cref{conj:main-conj} holds: for every $n$-dimensional defect datum $\Def$ of length $k$ and every symmetric monoidal $(\infty,n)$-category $\Cc$, symmetric monoidal functors $Z\colon\Bord_n(\Def)\to\Cc$ are equivalent to compatible families of symmetric monoidal functors $\{\beta_\lambda\}$ as in \Cref{conj:main-conj}.
\end{corollary}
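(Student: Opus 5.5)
The corollary is a direct combination of \Cref{thm:main-comparison} with the hypotheses. Fix $n$, a symmetric monoidal $(\infty,n)$-category $\Cc$, and an $n$-dimensional defect datum $\Def$ of length $k$. Assuming the ordinary cobordism hypothesis, \Cref{thm:main-comparison} shows that Hypothesis~\ref{hyp:def-cob-hyp-main-2} holds if and only if \Cref{conj:main-conj} holds. We also assume the cobordism hypothesis for defects, which in the sense of \Cref{term:defect-cob-hyp} includes Hypothesis~\ref{hyp:def-cob-hyp-main-2} for every defect datum. So the forward implication of \Cref{thm:main-comparison} gives \Cref{conj:main-conj}, that is, the asserted equivalence between functors $Z\colon\Bord_n(\Def)\to\Cc$ and compatible families $\{\beta_\lambda\}$.

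The one point to check is the quantifier structure. The proof of \Cref{thm:main-comparison} runs by induction on the length. At each stage it identifies the data classifying extensions of a fixed $Z'\colon\Bord_n(\Def')\to\Cc$ on both sides, then uses the lemma relating \Cref{conj:main-conj} to \Cref{conj:main-conj-ind}. That lemma requires \Cref{conj:main-conj} in all lengths below $k$. We therefore run the same induction on $k$ directly.

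\begin{itemize}
\item \emph{Base case} $k=0$. Here \Cref{conj:main-conj} is trivial: an $L_0$-labeled $(X_0,\zeta_0)$-structure is the same as an $(X_0\times L_0,\zeta_0\times L_0)$-structure.
\item \emph{Inductive step.} Assume \Cref{conj:main-conj} in all lengths below $k$. Hypothesis~\ref{hyp:def-cob-hyp-main-2} for $\Def$ classifies extensions of $Z'$ by $O(n-k)$-equivariant families of ambiently $n$-dualizable $k$-morphisms. The chain (3)$\Leftrightarrow$(2)$\Leftrightarrow$(1) in the proof of \Cref{thm:main-comparison} then identifies these families with families $\{\beta_\lambda\}$ satisfying the source and target conditions. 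This chain uses \Cref{thm:JFS-oplax-dual}, \Cref{thm:main-dual-result} and the ordinary cobordism hypothesis. The result is \Cref{conj:main-conj-ind} in length $k$, and the lemma upgrades it to \Cref{conj:main-conj} in length $k$.
\end{itemize}

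The main obstacle is making sure the identifications are compatible with the normalization $Z(\Icube{i}(\tilde x,\lambda))=\beta_\lambda(\pt_{\tilde x})$. Both hypotheses are implemented by evaluation on the standard cubes. The ordinary cobordism hypothesis sends $\beta_\lambda$ to its value on $\pt_{\tilde x}$, whose underlying $k$-morphism is $f^\sharp_{\tilde x,\lambda}$. So this compatibility holds by construction, and no further argument is needed.
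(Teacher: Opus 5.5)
Your proposal is correct and matches the paper, which deduces the corollary immediately from the forward direction of \Cref{thm:main-comparison} combined with the assumed Hypothesis~\ref{hyp:def-cob-hyp-main-2}. Your re-run of the induction on $k$ (through \Cref{conj:main-conj-ind} and the lemma relating it to \Cref{conj:main-conj}) just unpacks that theorem's own proof; it is harmless but not needed.
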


\begin{proof}
Immediate from \Cref{thm:main-comparison}.
\end{proof}

\begin{remark}
Let $\Def_1$ be the defect datum corresponding to the domain wall labeling of \Cref{ex:dom-wall-labels} and no tangential structures (i.e., unoriented manifolds and strata). Then \Cref{cor:general-intro-thm-1}, applied to $\Def_1$, recovers \Cref{intro-thm-1} from the introduction. For a general $\Def$, it extends that statement to prototypical defects of arbitrary codimension, equipped with general tangential structures and labeling systems.
\end{remark}

\subsection{Extension to general singularities}\label{sec:conical}

We conclude by extending the preceding reformulation to the general singularities considered in \cite[Section~4.3]{L09}. The resulting statement has the following form: extending a field theory across a new class of singularities is equivalent to specifying a symmetric monoidal oplax natural transformation from the trivial theory to the theory obtained by evaluating the original field theory on the links of the singularities. We first give a brief account of Lurie's singularity data, then recall the cobordism hypothesis with singularities, and finally state this reformulation and prove its equivalence to the cobordism hypothesis with singularities.

\subsubsection{Singularity data and links}

Lurie's definition of a singularity datum \cite[Definition Sketch~4.3.2]{L09} is inductive. We do not recall the full definition here; instead, we describe the inductive step that contains the features needed for our application. Suppose that $\mathcal S'$ is an $n$-dimensional singularity datum of length $k-1$. An extension of $\mathcal S'$ to a singularity datum $\mathcal S$ of length $k$ is specified by a topological space $X_k$, a vector bundle $\zeta_k\to X_k$ of rank $n-k$, and a fiber bundle $p_k:E_k\to X_k$ equipped with a compatible family of $\mathcal S'$-structures on its fibers. For each $x\in X_k$, the fiber $E_x$ is a compact $(k-1)$-dimensional $\mathcal S'$-manifold whose stabilized tangential structure involves the complementary directions $\zeta_{k,x}\oplus\RR$. The fiber $E_x$ specifies the link of the singularity corresponding to $x$.

An $\mathcal S$-manifold has a codimension-$k$ stratum equipped with an $(X_k,\zeta_k)$-structure. Locally, a neighborhood of this stratum is obtained fiberwise by taking the cone on the corresponding links. Here, the component of $\RR$ in the stabilized tangent bundle corresponds to the radial direction of the cone. Thus, the link records how the codimension-$k$ stratum meets all of the singularities already specified by $\mathcal S'$.

The defect structures introduced in Section~\ref{sec:def-structure} arise as a special case of this construction. The link of a framed prototypical codimension-$k$ stratum is the sphere $S^{k-1}$ equipped with the flag
\begin{equation*}
S^0\subset S^1\subset\cdots\subset S^{k-1},
\end{equation*}
with labels determined by the iterated source and target maps. Thus, our prototypical defects correspond to a particular globular choice of link, whereas Lurie's definition allows the link to be an arbitrary compact $\mathcal S'$-manifold.

\begin{example}
When $k=1$, the links are compact zero-dimensional manifolds. The link of a domain wall is $S^0$: its two points correspond to the two sides of the wall, and their structures record the source and target bulk data. By contrast, a one-point link produces manifolds with boundary.
\end{example}

\subsubsection{The cobordism hypothesis with singularities}

Let $\widetilde X_k\to X_k$ denote the frame bundle of $\zeta_k$. A frame $\widetilde x$ of $\zeta_{k,x}$ identifies $\zeta_{k,x}$ with $\mathbb R^{n-k}$ and determines an object
\begin{equation*}
E_{\widetilde x}\in\Omega^{k-1}\Bord_n(\mathcal{S}')
\end{equation*}
represented by the link $E_x$ with its induced $\mathcal S'$-structure. The assignment $\widetilde x\longmapsto E_{\widetilde x}$ is $O(n-k)$-equivariant. Moreover, the cone on $E_{\tilde{x}}$ defines a $1$-morphism $C(E_{\tilde{x}}):\varnothing \to E_{\widetilde x}$ in $\Omega^{k-1}\Bord_n(\mathcal{S})$, or equivalently a $k$-morphism $C(E_{\tilde{x}}):\varnothing \to E_{\widetilde x}$ in $\Bord_n(\mathcal{S})$.

The following is the inductive form of the cobordism hypothesis with singularities. We state the dualizability condition explicitly; in Lurie's formulation, the target category has duals, so this condition is automatic.

\begin{hypothesis}[{Lurie \cite[Theorem~4.3.11]{L09}}]\label{hyp:general-singular-cobordism}
Let $\Cc$ be a symmetric monoidal $(\infty,n)$-category and let $Z'\colon\Bord_n(\mathcal{S}')\longrightarrow\Cc$
be a symmetric monoidal functor. There is an equivalence between the following data:
\begin{enumerate}
\item Symmetric monoidal functors $Z\colon\Bord_n(\mathcal S)\longrightarrow\Cc$
extending $Z'$.

\item $O(n-k)$-equivariant families of ambiently $n$-dualizable $k$-morphisms in $\Cc$ of the form
\begin{equation*}
    \left\{
    \eta_{\widetilde x}\colon
    \unit\longrightarrow Z'(E_{\widetilde x})
    \right\}_{\widetilde x\in\widetilde X_k}
\end{equation*}
\end{enumerate}
Moreover, this equivalence is implemented by evaluation on the family of cones parametrized by $\tilde{x}\in \widetilde{X}_k$: $Z \mapsto 
\{Z\bigl(C(E_{\widetilde x})\bigr)
\}_{\tilde{x}\in \widetilde{X}_k}$.
\end{hypothesis}

\subsubsection{Reformulation via oplax natural transformations}

The family of links determines a symmetric monoidal functor
\begin{equation*}
\operatorname{Link}_{\mathcal{S}}\colon
\Bord_{n-k}^{(X_k,\zeta_k)}
\longrightarrow
\Omega^{k-1}\Bord_n(\mathcal{S}').
\end{equation*}
Geometrically, if $M$ is a manifold with $(X_k,\zeta_k)$-structure given by the map $q\colon M\to X_k$, then
\begin{equation*}
\operatorname{Link}_{\mathcal S}(M) = M\times_{X_k}E_k,
\end{equation*}
equipped with its induced $\mathcal S'$-structure. In particular, $\operatorname{Link}_{\mathcal{S}}(\mathrm{pt}_{\widetilde{x}}) = E_{\widetilde{x}}$.

Given a symmetric monoidal functor $Z'\colon\Bord_n(\mathcal{S}')\longrightarrow\Cc$, we define the associated link theory to be the composite
\begin{equation*}
Z'_{\operatorname{Link}}
:=
\bigl(\Omega^{k-1}Z'\bigr)\circ\operatorname{Link}_{\mathcal S}\colon
\Bord_{n-k}^{(X_k,\zeta_k)}
\longrightarrow
\Omega^{k-1}\Cc.
\end{equation*}
Let
\begin{equation*}
\underline{\unit}\colon
\Bord_{n-k}^{(X_k,\zeta_k)}
\longrightarrow
\Omega^{k-1}\Cc
\end{equation*}
denote the trivial symmetric monoidal functor.

\begin{conjecture}\label{conj:general-singular-reformulation}
Let $\mathcal S$ be an $n$-dimensional singularity datum of length $k$ extending a singularity datum $\mathcal S'$ of length $k-1$, and let $Z'\colon\Bord_n(\mathcal{S}')\longrightarrow\Cc$ be a symmetric monoidal functor. There is an equivalence between the following data:
\begin{enumerate}
\item Symmetric monoidal functors $Z:\Bord_n(\mathcal{S})\longrightarrow\Cc$ extending $Z'$.

\item Symmetric monoidal oplax natural transformations $\beta \colon \underline{\unit} \Longrightarrow Z'_{\operatorname{Link}}$

\end{enumerate}
Moreover, this equivalence satisfies
\begin{equation*}
\beta(\mathrm{pt}_{\widetilde x})
=
Z\bigl(C(E_{\widetilde x})\bigr)
\colon
\unit\longrightarrow Z'(E_{\widetilde x})
\end{equation*}
in $\Omega^{k-1}\Cc$, for every $\widetilde x\in\widetilde X_k$.
\end{conjecture}

\begin{remark}
One may replace oplax natural transformations with lax natural transformations in the formulation of Conjecture \ref{conj:general-singular-reformulation}.
\end{remark}

\begin{remark}
For a prototypical domain wall, the link is $S^0$, with its two points carrying the source and target bulk data. If these bulk theories are denoted by $F_A$ and $F_B$, then the link functor is
\begin{align*}
    \operatorname{Link} : \Bord_{n-1}^{(X_1,\zeta_1)} &\to  \Bord_{n}^{(X_0,\zeta_0)} ,\\
    \pt_{\widetilde{x}} &\mapsto \pt_{\widetilde{q}(\widetilde{x}),A}^\vee \sqcup \pt_{\widetilde{q}(\widetilde{x}),B} ,
\end{align*}
and the associated link theory  $Z'_{\operatorname{Link}}$ is therefore equivalent to $\tau_{\leq n-1}F_A^\vee\otimes \tau_{\leq n-1}F_B$. Under the usual folding equivalence, a transformation $\underline{\unit}\Rightarrow \tau_{\leq n-1}F_A^\vee\otimes \tau_{\leq n-1}F_B$ is equivalent to an oplax natural transformation from $\tau_{\leq n-1}F_A$ to $\tau_{\leq n-1}F_B$. Thus, the formulation above recovers the reformulation for prototypical defects after folding.
\end{remark}

\begin{theorem}\label{thm:general-singular-comparison}
Assuming the ordinary cobordism hypothesis, Conjecture~\ref{conj:general-singular-reformulation} is equivalent to the cobordism hypothesis with singularities, Hypothesis~\ref{hyp:general-singular-cobordism}.
\end{theorem}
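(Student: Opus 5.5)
The plan mirrors the inductive step of the proof of \Cref{thm:main-comparison}. Fix $\mathcal S'$, $\mathcal S$ and $Z'$. Both \Cref{conj:general-singular-reformulation} and \Cref{hyp:general-singular-cobordism} classify the same data, namely extensions $Z$ of $Z'$, and both are implemented by evaluation on the cones $C(E_{\widetilde x})$. It therefore suffices to show that the classifying data in the two statements are equivalent, compatibly with the evaluation maps. If the data agree, then one statement holds exactly when the other does.

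First I will rewrite the data in \Cref{conj:general-singular-reformulation}(2). A symmetric monoidal oplax natural transformation $\beta\colon\underline{\unit}\Rightarrow Z'_{\operatorname{Link}}$ of functors $\Bord_{n-k}^{(X_k,\zeta_k)}\to\Omega^{k-1}\Cc$ is, by \cite{JFS15}, the same as a symmetric monoidal functor
\[
\widehat\beta\colon\Bord_{n-k}^{(X_k,\zeta_k)}\to(\Omega^{k-1}\Cc)^{\to}_{(1)}
\]
with $S_1\circ\widehat\beta=\underline{\unit}$ and $T_1\circ\widehat\beta=Z'_{\operatorname{Link}}$. Here $\Omega^{k-1}\Cc$ is an $(\infty,n-k+1)$-category. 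The ordinary cobordism hypothesis, applied to the tangential structure $(X_k,\zeta_k)$ and taken relative to the fixed source and target, identifies such $\widehat\beta$ with $O(n-k)$-equivariant families $\{\widehat\beta(\pt_{\widetilde x})\}_{\widetilde x\in\widetilde X_k}$ of $(n-k)$-dualizable objects of $(\Omega^{k-1}\Cc)^{\to}_{(1)}$ whose source is $\unit$ and whose target is $Z'(E_{\widetilde x})$. The source and target objects are $(n-k)$-dualizable because they are values of symmetric monoidal functors out of a bordism category. By \Cref{thm:JFS-oplax-dual}, an object $\eta$ with this source and target is $(n-k)$-dualizable exactly when the underlying $1$-morphism $\eta^\sharp\colon\unit\to Z'(E_{\widetilde x})$ of $\Omega^{k-1}\Cc$ is $(n-k)$-times right-adjunctible.

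Next I apply \Cref{thm:main-dual-result} to $\eta^\sharp$, regarded as a $k$-morphism of $\Cc$. Its source is the identity $(k-1)$-morphism of $\unit$, which lies in $\Cc^{nd}$. Its target is $Z'(E_{\widetilde x})$, which is the image under $Z'$ of a $(k-1)$-morphism of $\Bord_n(\mathcal S')$. Since $\Bord_n(\mathcal S')$ has duals and adjoints, $Z'(E_{\widetilde x})$ lies in $\Cc^{nd}$ by \Cref{lem:dual-towers}. So both source and target are ambiently $n$-dualizable, and \Cref{thm:main-dual-result} says $\eta^\sharp$ is $(n-k)$-times right-adjunctible if and only if it is ambiently $n$-dualizable. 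Applying this to every $\widetilde x$ turns the data of (2) into $O(n-k)$-equivariant families of ambiently $n$-dualizable $k$-morphisms $\unit\to Z'(E_{\widetilde x})$. This is exactly \Cref{hyp:general-singular-cobordism}(2). Under these identifications $\beta(\pt_{\widetilde x})$ corresponds to $\eta_{\widetilde x}$, so the two evaluation maps on cones agree.

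I expect the main obstacle to be the equivariance bookkeeping. The $O(n-k)$-action produced by the cobordism hypothesis on $(n-k)$-dualizable objects of the subcategory of $(\Omega^{k-1}\Cc)^{\to}_{(1)}$ with fixed source and target has to match the action of \Cref{cor:O(n-k)-action} on ambiently $n$-dualizable $k$-morphisms. I would argue this by definition: the action in \Cref{cor:O(n-k)-action} is itself built from the duals in the (op)lax arrow category via \Cref{thm:O(n-k)-action}, after the identification $(\Omega^{k-1}\Cc)^{\to}_{(1)}\simeq$ a fiber of $\Cc^{\to}_{(k)}$ over $\Id_\unit$. A smaller point is checking that ``relative'' cobordism hypothesis, meaning fibers of the maps $(S_1,T_1)$ over fixed functors, follows from the absolute one by taking fibers of the equivalence. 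The lax variant is handled identically, using left-adjunctibility.
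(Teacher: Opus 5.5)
Your proposal is correct and follows essentially the same route as the paper. You identify $\beta$ with a symmetric monoidal functor into the oplax arrow category with prescribed source and target, and use the ordinary cobordism hypothesis to reduce to $O(n-k)$-equivariant families of $(n-k)$-dualizable objects. You then apply \Cref{thm:JFS-oplax-dual} to get $(n-k)$-times right-adjunctibility and upgrade this to ambient $n$-dualizability via \Cref{thm:main-dual-result}. The differences are cosmetic: you apply \Cref{thm:JFS-oplax-dual} directly in $(\Omega^{k-1}\Cc)^{\to}_{(1)}$ rather than first identifying it with a subcategory of $\Cc^{\to}_{(k)}$, and you are more explicit than the paper about the fibered form of the cobordism hypothesis and the matching of the $O(n-k)$-actions.
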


\begin{proof}
A symmetric monoidal oplax natural transformation $\beta\colon \underline{\unit} \Longrightarrow Z'_{\operatorname{Link}}$ is by definition a symmetric monoidal functor
\begin{equation*}
\beta: \Bord_{n-k}^{(X_k,\zeta_k)}
\longrightarrow
\bigl(\Omega^{k-1}\Cc\bigr)^{\rightarrow}
\end{equation*}
whose source is $\underline{\unit}$ and whose target is $Z'_{\operatorname{Link}}$. By the constructions of \cite[Section~5]{JFS15}, the oplax arrow category $(\Omega^{k-1}\Cc)^\to$ identifies with the symmetric monoidal subcategory of $\Cc_{(k)}^\to$ on which all iterated source and target functors landing below level $k-1$ are trivial.

By the ordinary cobordism hypothesis, such functors are classified by $O(n-k)$-equivariant families of $(n-k)$-dualizable objects in the oplax arrow category $\Cc_{(k)}^\to$ with the prescribed source and target: each $\widetilde x\in\widetilde X_k$  is assigned the $k$-morphism $\beta_{\widetilde x}\colon \unit\to Z'(E_{\widetilde x})$.

Both $\unit$ and $Z'(E_{\widetilde x})$ are ambiently $n$-dualizable $(k-1)$-morphisms in $\Cc$. By the classification of dualizable objects in the oplax arrow category, Theorem \ref{thm:JFS-oplax-dual}, $\beta_{\tilde{x}}$ is $(n-k)$-times right-adjunctible in $\Cc$. By \Cref{thm:main-dual-result}, we have that $\beta_{\tilde{x}}$ is ambiently $n$-dualizable in $\Cc$. Hence, the symmetric monoidal functor $\beta$ is classified by an $O(n-k)$-equivariant family of ambiently $n$-dualizable $k$-morphisms in $\Cc$.

This is precisely the data appearing in Hypothesis~\ref{hyp:general-singular-cobordism}. In both formulations, the correspondence is implemented by evaluation on the cone $C(E_{\widetilde x})$. Therefore, the two formulations are equivalent. 
\end{proof}

\section{Application: Reducibility for modules over $E_n$-algebras}\label{sec:En-application}

In this section we apply \Cref{thm:main-dual-result} to the higher Morita category and prove \Cref{thm:En-bimod} and \Cref{thm:En-mod}. In \Cref{sec:morita} we fix notation for the higher Morita category and recall the relative dualizability conditions of \cite{SSS26}, which detect one-sided iterated adjunctibility of its $1$-morphisms. In \Cref{sec:En-theorems} we combine these results with \Cref{thm:main-dual-result}.

\subsection{Dualizability in the higher Morita category}\label{sec:morita}

Throughout this section, $\Ss$ denotes a presentably symmetric monoidal
$(\infty,1)$-category. We recall as much of the higher Morita category $\Mor_{n}(\Ss)$ of
\cite{Sch14,Kar25,SSS26} as we need; our results use only the dualizability statements of \cite{GS18,SSS26}, and not the details of the construction, for which we refer the reader to those sources. This model is expected to be equivalent to Haugseng's combinatorial model \cite{Hau17}, a proof of which is the subject of work in progress by Scheimbauer--Steffens--\v{S}vraka
\cite{ScStSv}.

The higher Morita category $\Mor_{n}(\Ss)$ is a symmetric monoidal
$(\infty,n+1)$-category built from factorization algebras on $\RR^n$. Its
objects are the locally constant factorization algebras on $\RR^n$, equivalently, the $E_n$-algebras in $\Ss$ \cite[Section~5.4.2]{LHA}.
For $1\leq k\leq n$, the
$k$-morphisms of $\Mor_{n}(\Ss)$ are constructible pointless factorization
algebras on $\RR^n$ stratified by the flag of subspaces
\begin{equation*}
    \{x_1=\cdots=x_k=0\} \subset \{x_1=\cdots=x_{k-1}=0\} \subset \cdots
    \subset \{x_1=0\} \subset \RR^n .
\end{equation*}
This is the same as the local model of a prototypical codimension-$k$ defect, as in \Cref{sec:def-structure}. For $k<n$, the stratification has no marked points, and a $k$-morphism is the same as a factorization algebra on $\RR^n$ constructible with respect to this stratification. For $k=n$, the deepest stratum is the marked point $\{0\}$, and an $n$-morphism is a constructible \emph{pointless} factorization algebra in the sense of \cite{Kar25}; in particular, the object assigned to a disk centered at the origin is not equipped with a
pointing. The $(n+1)$-morphisms are morphisms of pointless factorization algebras.

We will need $1$-morphisms explicitly. A $1$-morphism $M\colon A\to B$ in $\Mor_{n}(\Ss)$ is a constructible pointless factorization algebra on $\RR^n$ stratified by a single hyperplane $\{x_1=0\}$. In the case $n=2$, we depict it as
\begin{equation*}
    \begin{tikzpicture}
\draw[defectpurple,thick,dashed] (1,0) -- (0,0) -- (0,2)--(1,2);
\filldraw[defectpurple,opacity=0.3] (1,0) -- (0,0) -- (0,2)--(1,2);
\draw[defectyellow,thick,dashed] (1,0) -- (2,0) -- (2,2)--(1,2);
\filldraw[defectyellow,opacity=0.3] (1,0) -- (2,0) -- (2,2)--(1,2);
\draw[very thick, green!85!black] (1,0) -- (1,2) node[pos=0.8,right,green!85!black] {$M$};
\node[defectpurple] at (0.4,0.6) {$A$};
\node[defectyellow] at (1.6,0.6) {$B$};
\end{tikzpicture}
\end{equation*}
Restricting to an open disk contained in $\{x_1<0\}$ gives an $E_n$-algebra $A\in\Ss$, the source of $M$, and restricting to an open disk in $\{x_1>0\}$ gives an $E_n$-algebra $B$, the target. The data assigned to a disk intersecting the hyperplane (in a single connected component) is the object of $\Ss$ underlying $M$, which we again denote by $M$. The structure maps for configurations of disks intersecting the hyperplane (in a single connected component) make $M$ an $E_{n-1}$-algebra, and those for configurations involving disks in the two half-spaces equip it with commuting actions of $A$ and $B$. Importantly, in the case $n=1$, the stratum $\{0\}$ is marked, and $M$ corresponds to a bimodule of $E_1$-algebras in the usual sense, rather than a pointed bimodule.

We refer to such a $1$-morphism as a \emph{bimodule of $E_n$-algebras}, or an \emph{$(A,B)$-bimodule} for short. When referring to a module over the underlying $E_1$-algebra of $A$, we use the term \emph{$E_1$-module}. 

It was shown in \cite{GS18} that $\Mor_{n}(\Ss)$ has $n$-duals; in particular every $E_n$-algebra $A$ is $n$-dualizable as an object of $\Mor_{n}(\Ss)$, and so determines an $n$-dimensional framed topological field theory $F_A$ valued in $\Mor_{n}(\Ss)$. A construction of these field
theories via factorization homology is given in \cite{Sch14}.

Since $\Mor_{n}(\Ss)$ is an $(\infty,n+1)$-category, the first genuinely restrictive dualizability condition occurs at level $n+1$. With pointings, \cite{GS18} showed that these conditions enforce every $(n+1)$-dualizable $E_n$-algebra $A$ to be trivial. In the pointless setting, a formulation of $(n+1)$-dualizability was conjectured by Lurie \cite{L09}, and proven in \cite{SSS26} for the pointless factorization algebra model (see \cite{BV26} for related work): an $E_n$-algebra $A$ is $(n+1)$-dualizable if and only if $A$ is dualizable as an $E_1$-module over the factorization homologies
\begin{equation*}
    \int_{S^{k-1}\times \RR \times D^{n-k}} A  
\end{equation*} 
for $0\leq k \leq n$.

Throughout this section we apply the results of \Cref{sec:dualizability} with the ambient symmetric monoidal $(\infty,N)$-category $\Cc$ taken to be $\Mor_{n}(\Ss)$ and $N=n+1$; the terms \emph{$(n+1)$-dualizable} and \emph{ambiently $(n+1)$-dualizable} always refer to this ambient category.

\begin{remark}[Field-theoretic interpretation]\label{rmk:morita-domain-walls}
Let $A$ and $B$ be $(n+1)$-dualizable, so that $F_A$ and $F_B$ extend to $(n+1)$-dimensional framed topological field theories. By \Cref{thm:main-dual-result}, a $1$-morphism $M\colon A\to B$ determines, via the cobordism hypothesis for framed domain walls, a framed domain wall between $F_A$ and $F_B$ exactly when $M$ is $n$-times right-adjunctible, or equivalently when $M$ satisfies any other one-sided iterated adjunctibility condition. We state the results below purely in terms of the equivalent dualizability conditions on $M$, but they may be read throughout as conditions for $M$ to define such a domain wall.
\end{remark}

\subsubsection{Partial dualizability in the higher Morita category}

In \cite[Theorem 4.1.3]{SSS26}, it is shown that $M$ is $n$-times right-adjunctible in $\mathrm{Mor}_n(\Ss)$ if and only if $M$ is dualizable as an $E_1$-module over the factorization homologies of certain stratified spaces with coefficients in $M$. We now recall these stratified spaces from \cite[Section~4]{SSS26}. For $1\leq k \leq n$, let $\widetilde{\RR}^k$ denote $\RR^k$ with the structure of a $k$-framed manifold with interface given by the codimension-one submanifold $\{x_1=0\}$. We think of the region $\{x_1<0\}$ as being labeled by $A$, the region $\{x_1>0\}$ as being labeled by $B$, and the interface as being labeled by $M$. The factorization homology of $\widetilde{\RR}^n$ is then the object of $\Ss$ underlying the 1-morphism $M$, which is also denoted $M$.

The submanifolds $D^k$ and $S^{k-1}$ inherit an interface from $\RR^k$. Denote the corresponding $k$-framed manifolds with interface by $\widetilde{D}^{k}$ and $\widetilde{S}^{k-1}$. Fix $\delta \in (0,1/2)$, and define the following $k$-framed submanifolds of $\widetilde{D}^k$ and $\widetilde{S}^{k-1}$:
\begin{alignat*}{3}
    \widetilde{D}^k_+ &:= \widetilde{D}^k \cap \{x_1 > -\delta\} &\qquad  \widetilde{D}^k_- &:= \widetilde{D}^k \cap \{x_1 < \delta \} \\
    \widetilde{S}^{k-1}_+ &:= \widetilde{S}^{k-1} \cap \{x_1 > -\delta \} &\qquad  \widetilde{S}^{k-1}_- &:= \widetilde{S}^{k-1} \cap \{x_1 < \delta \}.
\end{alignat*} 
For $k=1,2,3$, the stratified manifold $\widetilde{D}^k_-$ is depicted as follows:
\begin{equation*}
\begin{tikzpicture}
\draw[thick,defectyellow] (0,0) -- (0.5,0);
\draw[thick,defectpurple] (-1,0) -- (0,0);
\node[above,defectyellow] at (0.5,0) { $B$};
\node[above,defectpurple] at (-0.5,0) { $A$};
\filldraw[defectpurple]  (-1,0) circle (1pt);
\filldraw[green!85!black]  (0,0) circle (1pt);

\begin{scope}[xshift=4.5cm]
\filldraw[defectpurple,opacity=0.3] (0,1) arc (90:270:1);
\draw[thick,defectpurple] (0,1) arc (90:270:1) node[midway,left,defectpurple] {$A$} ;

\filldraw[defectyellow,opacity=0.3] (0,-1) -- (0.5,-1) -- (0.5,1) -- (0,1);
\draw[thick,defectyellow] (0,1) -- (0.5,1) node[above,defectyellow] {$B$};;
\draw[thick,defectyellow] (0,-1) -- (0.5,-1);
\draw[thick,defectyellow,dashed] (0.5,-1) -- (0.5,1);

\draw[very thick, green!85!black] (0,1) -- (0,-1);
\filldraw[green!85!black]  (0,1) circle (0.8pt);
\filldraw[green!85!black]  (0,-1) circle (0.8pt);
\end{scope}

\begin{scope}[xshift=9cm]

    %%%%% the back
    \filldraw[defectyellow,opacity=0.2] (0.5,1) -- (-0.3,1) arc (90:270:0.3 and 1) -- (0.5,-1) arc (270:90:0.3 and 1) ;
    \filldraw[defectpurple,opacity=0.2]  (-0.3,1) arc (90:270:1.3 and 1) arc (-90:90:0.3 and 1);
    \draw[thick,defectyellow,dashed]  (0.5,1) arc (90:270:0.3 and 1);
    \draw[very thick, green!85!black] (-0.3,1) arc (90:270:0.3 and 1);

    %%% the fill
    \filldraw[green,opacity=0.2] (0,0) arc (0:360:0.3 and 1); 
    \filldraw[defectyellow,opacity=0.1] (0.8,0) arc (0:360:0.3 and 1);

    %%%% the front
    \filldraw[defectyellow,opacity=0.3] (0.5,1) -- (-0.3,1) arc (90:-90:0.3 and 1) -- (0.5,-1) arc (-90:90:0.3 and 1) ;
    \filldraw[defectpurple,opacity=0.3]  (-0.3,1) arc (90:270:1.3 and 1) arc (-90:-270:0.3 and 1);
    \draw[thick,defectyellow,dashed] (0.5,1) arc (90:-90:0.3 and 1);
    \draw[defectyellow] (0.5,1) node[above,defectyellow] {$B$} -- (-0.3,1) ;
    \draw[defectyellow] (0.5,-1) -- (-0.3,-1);

    \draw[thick,defectpurple]  (-0.3,1) arc (90:270:1.3 and 1) node[midway,left,defectpurple] {$A$};
    \draw[very thick, green!85!black] (-0.3,1) arc (90:-90:0.3 and 1);

\end{scope}

\end{tikzpicture}
\end{equation*}

Since $\widetilde{S}^{k-1}_{\pm}$ is the boundary of $\widetilde{D}^k_{\pm}$, for $M:A\to B$ a 1-morphism in $\Mor_{n}(\Ss)$, the factorization homology of $\widetilde{S}^{k-1}_{\pm} \times \RR^{n-k+1}$ acts canonically on the factorization homology
\begin{align*}
    \int_{\widetilde{D}^{k}_{\pm} \times \RR^{n-k}} M_{A\to B} \simeq \int_{\widetilde{\RR}^n} M_{A\to B} \simeq M \ .
\end{align*}

It is convenient to name these factorization homologies. For
$1 \leq k \leq n$, set
\begin{equation}\label{eq:relative-algebras}
    \Aalg{k}{M} := \int_{\widetilde{S}^{k-1}_{-} \times \RR^{n-k+1}} M_{A\to B}
    \qquad\text{and}\qquad
    \Balg{k}{M} := \int_{\widetilde{S}^{k-1}_{+} \times \RR^{n-k+1}} M_{A\to B} \, ,
\end{equation}
so that $M$ is canonically a left module over $\Aalg{k}{M}$ and a right
module over $\Balg{k}{M}$. Note that $\widetilde{S}^0_-$ is the positively
framed point labeled by $A$ and $\widetilde{S}^0_+$ is the positively
framed point labeled by $B$, so that $\Aalg{1}{M} \simeq A$ and
$\Balg{1}{M} \simeq B$. For $k \geq 2$, however, these algebras depend on
$M$ as well as on $A$ and $B$.
 
\begin{definition}\label{def:n-dualizable-over}
Let $M \colon A \to B$ be a $1$-morphism in $\Mor_{n}(\Ss)$. We say that
$M$ is \emph{$n$-dualizable over $A$} if it is left dualizable as a left
module over $\Aalg{k}{M}$ for every $1\leq k \leq n$, and
\emph{$n$-dualizable over $B$} if it is right dualizable as a right module
over $\Balg{k}{M}$ for every $1\leq k \leq n$.
\end{definition}
 
With this terminology, \cite[Theorem 4.1.3]{SSS26} reads as follows: the $1$-morphism $M$ is $n$-times right-adjunctible in $\Mor_{n}(\Ss)$ if and only if it is $n$-dualizable over $B$, and $M$ is $R^{n-1}L$-adjunctible (that is, admits $(n-1)$ levels of right adjoints and one level of left adjoints; see \Cref{def:n-times-right-left}) if and only if it is $n$-dualizable over $A$. In \cite{SS23} it is shown that every one-sided iterated adjunctibility condition on $M$ is equivalent to one of these two
conditions.

\begin{example}\label{ex:n-equals-one-dualizable-over}
When $n = 1$, \Cref{def:n-dualizable-over} involves only $\Aalg{1}{M}\simeq A$ and $\Balg{1}{M}\simeq B$, and so reduces to the usual notions of dualizability of an $(A,B)$-bimodule as a left $A$-module and as a right $B$-module; see \cite[Section~4.6]{LHA}.
\end{example}

\subsection{Reducibility for modules over $E_n$-algebras}\label{sec:En-theorems}

We now prove \Cref{thm:En-bimod} and \Cref{thm:En-mod}, the latter we think of as a reducibility theorem for modules over $(n+1)$-dualizable algebras. Each of these results follows from applying \Cref{thm:main-dual-result} to a 1-morphism $M: A\to B$ in the higher Morita category $\Mor_{n}(\Ss)$, and simplifying the equivalent dualizability conditions on $M$ using the results of \cite{SSS26}.

\ThmEnBimod*

\begin{proof}
By \cite[Theorem~4.1.3]{SSS26} $M$ is $n$-dualizable over $A$ if and only if it is $R^{n-1}L$-adjunctible in $\Mor_{n}(\Ss)$, and $M$ is $n$-dualizable over $B$ if and only if it is $n$-times right-adjunctible in $\Mor_n(\Ss)$. Since $A$ and $B$ are assumed to be $(n+1)$-dualizable, by \Cref{thm:main-dual-result}, each of these conditions is equivalent to $M$ being ambiently $(n+1)$-dualizable.
\end{proof}

\begin{example}
If $n=1$, \Cref{thm:En-bimod} recovers a well-known result from \cite[Section 4.6]{LHA}. Recall that an $E_1$-algebra $A$ is 2-dualizable when it is smooth (dualizable as an $E_1$-module over $A^e = A \otimes A^{op}$) and proper (dualizable as an object in $\Ss$). For $A$ and $B$ smooth and proper $E_1$-algebras, \Cref{thm:En-bimod} recovers the statement that an $(A,B)$-bimodule $M$ is dualizable over $A$ if and only if $M$ is dualizable over $B$.
\end{example}

In the case that $B$ is trivial, that is, for a left module $M:A\to\unit$ over $A$, \Cref{thm:En-bimod} simplifies further.

\ThmEnMod*

\begin{proof}
By \Cref{thm:En-bimod}, $M$ is $n$-dualizable over $A$ if and only if $M$ is $n$-dualizable over $\unit$. By \cite[Proposition 4.5.1]{SSS26} $M$ is $n$-dualizable over $\unit$ if and only if the $E_{n-1}$-algebra underlying $M$ is $n$-dualizable in $\Mor_{n-1}(\Ss)$.
\end{proof}

\begin{example}
We now spell out the conditions in the case $n=2$. Let $A$ be a $3$-dualizable $E_2$-algebra and $M$ a left module over $A$. By \cite[Theorem~4.1.3]{SSS26}, $M$ is $2$-dualizable over $A$ if and only if $M$ is dualizable over $\Aalg{1}{M}$ and $\Aalg{2}{M}$. As noted above, $\Aalg{1}{M} \simeq A$.

To compute $\Aalg{2}{M}$, let $M^L\colon\unit\to A$ be a left adjoint of $M$. We define the $A$-linear endomorphism object of $M$ by
\begin{equation*}
    \mathrm{End}_A(M):=M\circ M^L
    \in\mathrm{End}_{\Mor_2(\Ss)}(\unit) \simeq \Mor_1(\Ss) .
\end{equation*}
In particular, $\mathrm{End}_A(M)$ is naturally an $E_1$-algebra in $\Ss$. Moreover, by the prescription of adjoints in \cite{GS18} and excision for stratified factorization homology
\cite{AFT17}, we have
\begin{equation*}
    \int_{\widetilde{S}^1_- \times \RR} M_{A\to \unit} := \int_{\begin{tikzpicture}[scale=0.3]
\filldraw[defectpurple,opacity=0.3] (0,1) arc (90:270:1) -- (0,-0.6) arc (270:90:0.6) -- (0,1);
\draw[defectpurple,dashed] (0,1) arc (90:270:1) ;
\draw[defectpurple,dashed] (0,0.6) arc (90:270:0.6);

\filldraw[defectyellow,opacity=0.3] (0,-1) -- (0.5,-1) -- (0.5,-0.6) -- (0,-0.6);
\filldraw[defectyellow,opacity=0.3] (0,1) -- (0.5,1) -- (0.5,0.6) -- (0,0.6);
\draw[dashed,defectyellow] (0,1) -- (0.5,1);
\draw[dashed,defectyellow] (0,0.6) -- (0.5,0.6);
\draw[dashed,defectyellow] (0.5,0.6) -- (0.5,1);

\draw[defectyellow,dashed] (0,-0.6) -- (0.5,-0.6);
\draw[defectyellow,dashed] (0,-1) -- (0.5,-1);
\draw[dashed,defectyellow] (0.5,-1) -- (0.5,-0.6);

\draw[very thick, green!85!black] (0,1) -- (0,0.6);
\draw[very thick, green!85!black] (0,-1) -- (0,-0.6);
\end{tikzpicture}} M_{A\to \unit} \simeq \int_{\begin{tikzpicture}[scale=0.4]
\draw[defectyellow,thick,dashed] (0.5,0) -- (0,0) -- (0,0.5) -- (0.5,0.5);
\filldraw[defectyellow,opacity=0.3] (0,0) rectangle (0.5,0.5);
\draw[defectpurple,thick,dashed] (0.5,0) -- (1,0) -- (1,0.5) -- (0.5,0.5);
\filldraw[defectpurple,opacity=0.3] (0.5,0) rectangle (1,0.5);
\draw[very thick, green!85!black] (0.5,0) -- (0.5,0.5);
\end{tikzpicture}} M_{A\to \unit} \otimes_{\int_{\begin{tikzpicture}[scale=0.3]
\draw[defectpurple,thick,dashed] (0.5,0) -- (0,0) -- (0,0.5) -- (0.5,0.5) -- (0.5,0);
\filldraw[defectpurple,opacity=0.3] (0,0) rectangle (0.5,0.5);
\end{tikzpicture}} M_{A\to \unit}} \int_{\begin{tikzpicture}[scale=0.4]
\draw[defectpurple,thick,dashed] (0.5,0) -- (0,0) -- (0,0.5) -- (0.5,0.5);
\filldraw[defectpurple,opacity=0.3] (0,0) rectangle (0.5,0.5);
\draw[defectyellow,thick,dashed] (0.5,0) -- (1,0) -- (1,0.5) -- (0.5,0.5);
\filldraw[defectyellow,opacity=0.3] (0.5,0) rectangle (1,0.5);
\draw[very thick, green!85!black] (0.5,0) -- (0.5,0.5);
\end{tikzpicture}} M_{A\to \unit} \simeq M \circ M^L  \, ,
\end{equation*}
where we have omitted framings from the figures.

\Cref{thm:En-mod} then says that $M$ is dualizable as an $E_1$-module over both $A$ and $\mathrm{End}_A(M)$ if and only if $M$ is 2-dualizable as an $E_1$-algebra. That is, $M$ is a smooth and proper algebra in $\Ss$.    
\end{example}

\begin{remark}
One can also apply \cite[Theorem 3.0.1]{SSS26} to reduce the condition that $M$ is $n$-dualizable in $\Mor_{n-1}(\Ss)$ to the condition that $M$ is dualizable as an $E_1$-module over the factorization homologies
\begin{equation*}
    \int_{S^{k-1}\times \RR \times D^{n-1-k}} M
\end{equation*}
for $0\leq k \leq n-1$.
\end{remark}

\subsubsection{Outlook: even higher Morita categories}\label{sec:even-higher}
It would also be interesting to extend these results to the ``even higher'' Morita categories, using the technology of \cite{JFS15}. Let $\Ss'$ be a suitable symmetric monoidal $(\infty,m)$-category with $m>1$, and consider the higher Morita $(\infty,n+m)$-category $\Mor_n(\Ss')$. After truncation, \cite[Theorem~4.1.3]{SSS26} characterizes the first $n$ stages of adjunctibility of a module. At present, however, there is no known analog that characterizes the additional $l$ stages required for a module $M$ to be $(n+l)$-times right-adjunctible, where $1\leq l\leq m-1$. Nevertheless, the following example suggests that one should still expect meaningful results for modules over $(n+l)$-dualizable $E_n$-algebras.

\begin{example}
Let $C$ and $D$ be finite semisimple tensor categories which are $3$-dualizable as objects of the $3$-category $\TensCat$ of finite tensor categories studied in \cite{DSPS18}, and let $M$ be a finite $(C,D)$-bimodule category. By \cite[Proposition~3.3.2]{DSPS18}, if $M$ is exact as a module category over $C$, then it is twice right-adjunctible. On the other hand, $M$ is ambiently $3$-dualizable if and only if its underlying finite category is semisimple. Since $C$ and $D$ are $3$-dualizable, Theorem~\ref{thm:main-dual-result} therefore implies that every finite $(C,D)$-bimodule category which is exact over $C$ is semisimple. This recovers a standard result in the theory of finite tensor categories; see \cite[Example~7.5.4]{EGNO15}.
\end{example}

\printbibliography

\par\bigskip
\noindent
\textsc{Technical University of Munich}\\
85748 Garching, Germany\\
\textit{Email address:}
\href{mailto:will.stewart@tum.de}{\texttt{will.stewart@tum.de}}

\end{document}